\newcommand\Y{\mathbb Y}
\newcommand\Z{\mathbb Z}
\newcommand\C{\mathbb C}
\newcommand\R{\mathbb R}
\newcommand\T{\mathbb T}
\newcommand\GT{{\mathbb{GT}}}
\newcommand\LL{\mathbb L}
\newcommand\al{\alpha}
\newcommand\be{\beta}
\newcommand\ga{\gamma}
\newcommand\Ga{\Gamma}
\newcommand\de{\delta}
\newcommand\ka{\varkappa}
\newcommand\La{\Lambda}
\newcommand\la{\lambda}
\newcommand\si{\sigma}
\newcommand\epsi{\varepsilon}
\newcommand\om{\omega}
\newcommand\Om{\Omega}
\newcommand\wt{\widetilde}
\newcommand\wh{\widehat}
\newcommand\const{\operatorname{const}}
\newcommand\Dim{\operatorname{Dim}}
\newcommand\sym{{\operatorname{sym}}}
\newcommand\reg{{\operatorname{reg}}}
\newcommand\Res{{\operatorname{Res}}}
\newcommand\Sf{\mathfrak S}
\newcommand\down{{\downarrow}}
\newcommand\pd{\partial}
\newtheorem{theorem}{Theorem}[section]
\newtheorem{proposition}[theorem] {Proposition}
\newtheorem{corollary}[theorem]{Corollary}
\newtheorem{lemma}[theorem]{ Lemma}
\theoremstyle{definition}
\newtheorem{definition}[theorem]{Definition}
\newtheorem{remark}[theorem]{Remark}
\numberwithin{equation}{section}
\begin{document}

\title[The boundary of the Gelfand--Tsetlin graph]
{The boundary of the Gelfand--Tsetlin graph:\\ A new approach}

\author{Alexei Borodin}
\address{California Institute of Technology; Massachusetts Institute of Technology;
Institute for Information Transmission Problems, Russian Academy of Sciences}
\email{borodin@caltech.edu}

\author{Grigori Olshanski}
\address{Institute for Information Transmission Problems, Bolshoy
Karetny 19,  Moscow 127994, Russia; Independent University of Moscow, Russia}
\email{olsh2007@gmail.com}

\begin{abstract}
The Gelfand--Tsetlin graph is an infinite graded graph that encodes branching
of irreducible characters of the unitary groups. The boundary of the
Gelfand--Tsetlin graph has at least three incarnations ---  as a discrete
potential theory boundary, as the set of finite indecomposable characters of
the infinite-dimensional unitary group, and as the set of doubly infinite
totally positive sequences. An old deep result due to Albert Edrei and Dan
Voiculescu provides an explicit description of the boundary; it can be realized
as a region in an infinite-dimensional coordinate space.

The paper contains a novel approach to the Edrei--Voiculescu theorem. It is
based on a new explicit formula for the number of semi-standard Young tableaux
of a given skew shape (or of Gelfand--Tsetlin schemes of trapezoidal shape).
The formula is obtained via the theory of symmetric functions, and new
Schur-like symmetric functions play a key role in the derivation.
\end{abstract}

\maketitle

\tableofcontents

\section{Introduction}

\subsection{Finite characters of $S(\infty)$ and $U(\infty)$. A brief survey}

The symmetric group $S(n)$ and the unitary group $U(N)$ are two model
examples of finite and compact groups, respectively. Their irreducible characters
are basic objects of repre\-sen\-tation theory that have numerous applications.

In two remarkable papers by Thoma \cite{Tho64} and Voiculescu \cite{Vo76}
written inde\-pen\-dently and published twelve years apart from each other, the
authors discovered that the theory of characters can be nontrivially
generalized to groups $S(\infty)$ and $U(\infty)$ defined as inductive limits
of the group chains
$$
S(1)\subset S(2) \subset \dots  \quad\text{and}\quad     U(1)\subset U(2)\subset\dots\,.
$$

The original idea of Thoma (for $S(\infty)$) and Voiculescu (for $U(\infty)$)
consisted in replacing irreducible representations by factor representations
(in the sense of von Neumann) with finite trace. Then characters are still
ordinary functions on the group, and it turns out that for $S(\infty)$ and
$U(\infty)$ they depend on countably many continuous parameters. This fact
supports the intuitive feeling that these groups are ``big''.

It was later discovered (Vershik and Kerov \cite{VK81}, \cite{VK82}; Boyer
\cite{Boy83}) that the classification of finite characters of $S(\infty)$ and
$U(\infty)$ was obtained in a hidden form in earlier works of the beginning of
1950's (Aissen, Edrei, Schoenberg, and Whitney \cite{AESW51}; Aissen,
Schoenberg, and Whitney \cite{ASW52}; Edrei \cite{Ed52}, \cite{Ed53}). Those
papers solved the problems of classification of totally positive sequences
posed by Schoenberg in the end of 1940's (\cite{Sch48}). \footnote{Nowadays,
largely due to the works of Lusztig and Fomin-Zelevinsky, total positivity is a
popular subject. In 1960-70's the situation was different, and Thoma and
Voiculescu apparently were unaware of the work of Schoenberg and his
followers.}

On the other hand, Vershik and Kerov \cite{VK81}, \cite{VK82}, \cite{VK90}
outlined a different approach to finite characters. Their approach was not
based on total positivity and theory of functions of a complex variable, as
Edrei's and Thoma's. Instead, it relied on the ideas of discrete potential
theory and combinatorics of symmetric functions. In a broader context this
approach was described in detail in Kerov, Okounkov, and Olshanski \cite{KOO98}
and Okounkov and Olshanski \cite{OO98}, where the character problem was
rephrased in the language of boundaries of two infinite graphs, the Young graph
$\Y$ and the Gelfand--Tsetlin graph $\GT$. These are two model examples of the
so-called branching graphs; they encode branching rules of the irreducible
characters of symmetric and unitary groups, respectively.

Denote by $\chi_\nu$ the irreducible character of $S(n)$ or $U(N)$. Here index
$\nu$ is either a Young diagram with $n$ boxes or a signature of length $N$ (a
highest weight for $U(N)$). In Vershik--Kerov's approach, one studies the
limiting behavior of the normalized characters
$$
\wt\chi_\nu:=\frac{\chi_\nu}{\chi_\nu(e)}
$$
when $n$ or $N$ becomes large, and the diagram/signature is $n$ or $N$ dependent.
It turns out that possible limits of $\wt\chi_\nu$ are exactly the finite characters of
$S(\infty)$ or $U(\infty)$, respectively.

\subsection{A combinatorial formulation}

In the language of branching graphs, the question of asymptotics of
$\wt\chi_\nu$ can be reformulated in a purely combinatorial fashion. More
exactly, one asks about the asymptotics of
\begin{gather}
\frac{\dim(\ka,\nu)}{\dim\nu} \qquad \textrm{(for the symmetric group)}
\label{eqH.1}\\
\frac{\Dim_{K,N}(\ka,\nu)}{\Dim_N\nu} \qquad \textrm{(for the unitary group)},
\label{eqH.2}
\end{gather}
with the following notations:

$\bullet$ In the symmetric group case, $\ka$ is a Young diagram with $k<n$
boxes; $\dim\nu$ is the number of standard Young tableaux of shape $\nu$;
$\dim(\ka,\nu)$ is the number of the standard tableaux of skew shape $\nu/\ka$ if
$\ka\subset\nu$, and 0 if $\ka\not\subset\nu$.

$\bullet$ In the unitary group case, $\ka$ is a signature of length $K<N$;
$\Dim_N\nu$ is the number of triangular Gelfand--Tsetlin schemes with fixed top
row $(\nu_1,\dots,\nu_N)$; $\Dim_{K,N}(\ka,\nu)$ is the number of truncated
(trapezoidal) Gelfand--Tsetlin schemes with top row $\nu$ and bottom row $\ka$.

The ``dimensions'' $\dim$ and $\Dim$ count certain finite sets of monotone paths in $\Y$ and
$\GT$. In both cases, the problem consists in classification of all possible ways
for $\nu$ to approach infinity so that the  ``relative dimension'' \eqref{eqH.1} or
\eqref{eqH.2} has a finite limit for any fixed diagram/signature $\ka$.
These possibilities are parameterized by the points of the branching graph's boundary.

Let us note that the denominator in \eqref{eqH.1} or \eqref{eqH.2} is given by a
relatively simple formula, while computing the numerator is substantially harder.
This basic difficulty results in nontriviality of the asymptotic analysis.

\subsection{Motivations}

In the present paper we return to the problem of finding the boundary of
$\GT$ and obtain a new proof of completeness of the list of characters
of $U(\infty)$ given by Voiculescu in \cite{Vo76}. The reader would be fully
justified to ask why we decided to reconsider an old theorem and produce its
third proof, especially since our proof is not that simple. Here are our
arguments.

\medskip

(a) The boundary descriptions for $\Y$ and $\GT$ are strikingly similar. In
terms of total positivity, the points of both boundaries correspond to infinite
totally positive Toeplitz matrices; in the first case the matrices grow to one
side (have format $\mathbb N\times\mathbb N$), and in the second case they grow
to both sides (have format $\mathbb Z\times\mathbb Z$). We are confident that
the parallelism between $\Y$ and $\GT$ is deeply rooted, and one should expect
its appearance in other aspects as well \footnote{New results in this direction
are contained in our paper \cite{BO11}.}. However, if one compares the proofs
given in \cite{KOO98} for $\Y$ and in \cite{OO98} for $\GT$ then one would
notice that they are substantially different.

More exactly, in the case of $\Y$ in \cite[Theorem 8.1]{OO97a} the authors
obtained a formula that expressed the relative dimension \eqref{eqH.1} through
the shifted Schur functions. This formula is well adapted for the asymptotic
analysis, and \cite{KOO98} was based on this formula (and on its generalization
that includes the Jack parameter \cite{OO97b}). In the case of $\GT$ there was
no analogous expression for \eqref{eqH.2}. For that reason the authors of
\cite{OO98} had to follow a round-about path inspired by an idea from
\cite{VK82} of pursuing the asymptotics of the Taylor expansion of characters
$\wt\chi_\nu$ at the unit element of the group.

In the present paper, in contrast to \cite{OO98}, we work directly with the
relative dimension \eqref{eqH.2} and derive a formula for it that is suitable
for asymptotic analysis. Hence, we achieve uniformity in the asymptotic
approach to the boundaries of $\Y$ and $\GT$.

\medskip

(b) We believe that our formula for the relative dimension \eqref{eqH.2} and
its proof are of independent interest in algebraic combinatorics. The formula
involves certain new symmetric functions of Schur type. In the proof we also
use the so-called dual Schur functions that were thoroughly investigated in a
recent paper by Molev \cite{Mo09}.

\medskip

(c) The description of $\GT$'s boundary is derived below from a new result that
we call the Uniform Approximation Theorem. It is important to us as it allows
to substantially strengthen our results in \cite{BO10} on Markov dynamics on
the boundary of $\GT$. This development will be described in a separate
publication.

\medskip

(d)  To conclude, we believe that the classification of finite characters
of $U(\infty)$ is a difficult and truly deep result, and already for that reason
its third proof should not be dismissed as excessive.

\medskip

\subsection{Organization of the paper}

Section 2 contains main definitions and auxiliary results. A part of those is
contained in one form or another in Voiculescu \cite{Vo76}. At the end of the
section we give a description of $\GT$'s boundary (Theorem \ref{thmF.1} and
Corollary \ref{corF.1}).

In Section 3 we state the Uniform Approximation Theorem (Theorem
\ref{thmG.1}) and explain how it implies the results on the boundary of $\GT$.

Sections 4--8 contain the proof of the Uniform Approximation Theorem.

In Section 4 we give an auxiliary result on an identity of Cauchy type; here dual
Schur functions come about (more general results in this direction can be found in Molev
\cite{Mo09}).

In Section 5 we prove a different identity of Cauchy type:
\begin{equation}\label{eqH.3}
H^*(t_1;\nu)\dots H^*(t_K;\nu)
=\sum_{\ka\in\GT_K}\frac{\Dim_{K,N}(\ka,\nu)}{\Dim_N\nu}\,\Sf_{\ka\mid
N}(t_1,\dots,t_K).
\end{equation}
Here $t_1,\dots,t_K$ are complex variables, $\nu$ is an arbitrary signature
of length $N>K$,
\begin{equation*}
H^*(t;\nu)=\prod_{i=1}^N\frac{t+i}{t+i-\nu_i},
\end{equation*}
the summation in the right-hand side of \eqref{eqH.3} is over signatures $\ka$ of length
$K$, and $\Sf_{\ka\mid N}(t_1,\dots,t_K)$ are certain new analogs of Schur functions in
$K$ variables. The coefficients in front of these functions are the relative dimensions
\eqref{eqH.2} that we are interested in.

In Section 6 we show how \eqref{eqH.3} implies a Jacobi--Trudi type formula for
the relative dimension. It expresses the relative dimension as a determinant of
size $K\times K$ whose matrix elements are coefficients of the decomposition of
$H^*(t;\nu)$ on certain rational functions.

Section 7 explains how to write those coefficients through residues of
$H^*(t;\nu)$. As a result, we obtain an explicit formula for the relative dimension
(Theorem \ref{thmD.1}). For comparison, we also give a different formula (Remark \ref{remG.1}).
In contrast to Theorem \ref{thmD.1}, its derivation is simple but the formula seems
useless for our purposes.

In Section 8 using Theorem \ref{thmD.1} we conclude the proof of the Uniform Approximation
Theorem.

Together with the Uniform Approximation Theorem, the formula of Theorem \ref{thmD.1}
is one of our main results. It is plausible that this formula can be obtained in
a simpler way, and we would be very interested in seeing how to do that.
It often happens that combinatorial identities have different proofs which can
be simpler than the original derivation.  (For example, one could try
to derive Theorem \ref{thmD.1} from the formula of Remark
\ref{remG.1} or from the binomial formula for the normalized characters
$\wt\chi_\nu$ that \cite{OO97a} was based upon.)

Not only the Uniform Approximation Theorem provides a new derivation of $\GT$'s
boundary, but it also immediately implies the main results of \cite{VK82} and
\cite{OO98} on large $N$ asymptotics of the normalized characters
$\wt\chi_\nu$. In the last Section 9 we demonstrate that conversely, the
Uniform Approximation Theorem is not hard to prove using the results of
\cite{OO98} if one additionally employs the log-concavity of characters
$\wt\chi_\nu$ discovered by Okounkov \cite{Ok97}. We emphasize however that
this approach gives nothing for Theorem \ref{thmD.1}.

Let us finally mention a recent paper by Gorin \cite{Go11} where the boundary
of a ``$q$--analog'' of $\GT$ was described (the edges of the graph are supplied with
certain formal $q$-dependent multiplicities). It would be interesting to extend
the approach of the present paper to the $q$--$\GT$ case.

\subsection*{Acknowledgements} A.~B. was partially supported by NSF-grant
DMS-1056390. G.~O. was partially supported by a grant from Simons Foundation
(Simons--IUM Fellowship), the RFBR-CNRS grant 10-01-93114, and the project SFB
701 of Bielefeld University.

\section{Preliminaries}\label{F}

\subsection{The graph $\GT$}\label{sectF.2}
Following \cite{Wey39}, for $N\ge 1$ define a {\it signature\/} of length $N$
as an $N$-tuple of nonincreasing integers $\nu=(\nu_1\ge\dots\ge \nu_N)$, and
denote by $\GT_N$ the set of all such signatures.

Two signatures $\la\in\GT_{N-1}$ and $\nu\in \GT_{N}$ {\it interlace\/} if
$\nu_{i+1}\le \la_i\le\nu_i$ for all meaningful values of indices; in this case
we write $\la\prec\nu$.

Let $\GT=\bigsqcup_{N\ge 1} \GT_N$ be the set of signatures of arbitrary
length, and equip $\GT$ with edges by joining $\la$ and $\nu$ iff $\la\prec\nu$
or $\nu\prec\la$. This turns $\GT$ into a graph that is called the {\it
Gelfand--Tsetlin graph\/}. We call $\GT_N\subset\GT$ the {\it level\/ $N$
subset\/} of the graph.

By a {\it path\/} between two vertices $\ka\in\GT_K$ and $\nu\in\GT_N$, $K<N$,
we mean a sequence
$$
\ka=\la^{(K)}\prec\la^{(K+1)}\prec\dots\prec\la^{(N)}=\nu\in \GT_N.
$$
Such a path can be viewed as an array of numbers
$$
\bigl\{\la^{(j)}_i\bigr\}, \quad  K\le j\le N, \quad 1\le i\le j,
$$
satisfying the inequalities $\la^{(j+1)}_{i+1}\le \la_i^{(j)}\le
\la^{(j+1)}_i$. It is called a {\it Gelfand--Tsetlin scheme\/}. If $K=1$, the
scheme has triangular form and if $K>1$, it has trapezoidal form.

Let $\Dim_{K,N}(\ka,\nu)$ denote the number of paths between $\ka$ and $\nu$,
and let $\Dim_N\nu$ be the number of all paths starting at an arbitrary vertex
of level 1 and ending at $\nu$. Both these numbers are always finite; note that
they count the lattice points in some bounded convex polyhedra. The number
$\Dim_{K,N}(\ka,\nu)$ may be equal to 0, but $\Dim_N\nu$ is always strictly
positive.

For $N\ge 2$ denote by $\La^N_{N-1}$ the matrix of format $\GT_N\times
\GT_{N-1}$ with the entries
$$
\La^N_{N-1}(\nu,\la)=\begin{cases}\dfrac{\Dim_{N-1}\la}{\Dim_N\nu}, &
\la\prec\nu,\\
0, & \textrm{otherwise}.
\end{cases}
$$
By the very definition of the $\Dim$ function,
$$
\Dim_N\nu=\sum_{\la:\, \la\prec\nu}\Dim_{N-1}\la.
$$
It follows that $\La^N_{N-1}$ is a stochastic matrix:
$$
\sum_{\la\in\GT_{N-1}}\La^N_{N-1}(\nu,\la)=1 \qquad \forall \nu\in\GT_N.
$$

More generally, for $N>K\ge1$, the matrix product
\begin{equation}\label{eqF.15}
\La^N_K:=\La^N_{N-1}\La^{N-1}_{N-2}\dots\La^{K+1}_K
\end{equation}
is a stochastic matrix, too, and its entries are
$$
\La^N_K(\nu,\ka)=\frac{\Dim_K\ka\,\Dim_{K,N}(\ka,\nu)}{\Dim_N\nu}.
$$

\subsection{The boundary of $\GT$}
We say that an infinite sequence $M_1,M_2,\dots$ of probability distributions
on the sets $\GT_1,\GT_2,\dots$, respectively, forms a {\it coherent system\/}
if the distributions are consistent with the transition matrices $\La^2_1,
\La^3_2,\dots$, meaning that
$$
M_N\La^N_{N-1}=M_{N-1} \qquad \forall N\ge2.
$$
Here we interpret $M_N$ as a row vector $\{M_N(\nu): \nu\in\GT_N\}$, which
makes it possible to define the multiplication in the left-hand side. In more
detail, the relation means
$$
\sum_{\nu\in\GT_N}M_N(\nu)\La^N_{N-1}(\nu,\la)=M_{N-1}(\la) \qquad
\forall\la\in\GT_{N-1}.
$$
Note that the set of all coherent systems is a convex set: if
$\{M_N:N=1,2,\dots\}$ and $\{M'_N:N=1,2,\dots\}$ are two coherent systems, then
for any $p\in[0,1]$, the convex combination $\{pM_N+(1-p)M'_N:N=1,2,\dots\}$ is
a coherent system, too. A coherent system is said to be {\it extreme\/} if it
is an extreme point in this convex set.

\begin{definition}
The {\it boundary\/ $\pd(\GT)$ of the Gelfand--Tsetlin graph\/} $\GT$ is
defined as the set of extreme coherent systems of distributions on $\GT$.
\end{definition}

This definition mimics the well-known definition of the {\it minimal part of
the Martin entrance boundary\/} of a Markov chain (see, e.g. \cite{KSK76}).
Indeed, consider the infinite chain
\begin{equation}\label{eqF.1}
\GT_1\dashleftarrow\GT_2\dashleftarrow\GT_2\dashleftarrow\dots
\end{equation}
where the dashed arrows symbolize the transition matrices $\La^N_{N-1}$. One
may regard \eqref{eqF.1} as a Markov chain with time parameter $N=1,2,\dots$
ranging in the reverse direction, from infinity to 1, and with the state space
varying with time. Although such a Markov chain looks a bit unusual, the
conventional definition of the minimal entrance boundary can be adapted to our
context, and this leads to the same space $\pd(\GT)$. Note that the minimal
entrance boundary may be a proper subset of the whole Martin entrance boundary,
but for the concrete chain \eqref{eqF.1} these two boundaries coincide.

One more interpretation of the boundary $\pd(\GT)$ is the following: it
coincides with the {\it projective limit\/} of chain \eqref{eqF.1} in the
category whose objects are measurable spaces and morphisms are defined as
Markov transition kernels (stochastic matrices are just simplest instances of
such kernels).

For more detail about the concept of entrance boundary employed in the present
paper, see, e.g., \cite{Dy71}, \cite{Dy78}, \cite{Wi85}.

\subsection{Representation-theoretic interpretation}
Let $U(N)$ denote the group of $N\times N$ unitary matrices or, equivalently,
the group of unitary operators in the coordinate space $\C^N$. For every
$N\ge2$ we identify the group $U(N-1)$ with the subgroup of $U(N)$ that fixes
the last basis vector. In this way we get an infinite chain of groups embedded
into each other
\begin{equation}\label{eqF.2}
U(1)\subset U(2)\subset U(3)\subset\dots
\end{equation}

As is well known, signatures  from $\GT_N$ parameterize irreducible characters
of $U(N)$; given $\nu\in\GT_N$, let $\chi_\nu$ denote the corresponding
character. The {\it branching rule\/} for the irreducible characters of the
unitary groups says that
\begin{equation}\label{eqF.3}
\chi_\nu\Big|_{U(N-1)}=\sum_{\la:\,\la\prec\nu}\chi_\la \qquad \forall
\nu\in\GT_N, \quad N\ge2,
\end{equation}
where the vertical bar means the restriction map from $U(N)$ to $U(N-1)$. The
graph $\GT$ just reflects the rule \eqref{eqF.3}; for this reason one says that
$\GT$ is the {\it branching graph\/} for the characters of the unitary groups.

It follows from \eqref{eqF.3} that $\Dim_N\nu$ equals $\chi_\nu(e)$, the value
of $\chi_\nu$ at the unit element of $U(N)$, which is the same as the dimension
of the corresponding irreducible representation. This explains our notation.

Let $U(\infty)$ be the union of the groups \eqref{eqF.2}. Although $U(\infty)$
is not a compact group, one can develop for it a rich theory of characters
provided that the very notion of character is suitably revised:

\begin{definition}
By a {\it character\/} of $U(\infty)$ we mean a function $\chi:U(\infty)\to\C$
satisfying the following conditions:

$\bullet$ $\chi$ is continuous in the inductive limit topology on $U(\infty)$
(which simply means that the restriction of $\chi$ to every subgroup $U(N)$ is
continuous);

$\bullet$ $\chi$ is a class function, that is, constant on conjugacy classes;

$\bullet$ $\chi$ is positive definite;

$\bullet$ $\chi(e)=1$.

Next, observe that the set of all characters is a convex set and say that
$\chi$ is an {\it extreme character\/} if it is an extreme point of this set.

\end{definition}

The above definition makes sense for any topological group. In particular, the
extreme characters of $U(N)$ are precisely the {\it normalized\/} irreducible
characters
$$
\wt\chi_\nu:=\frac{\chi_\nu}{\chi_\nu(e)}=\frac{\chi_\nu}{\Dim_N\nu}, \qquad
\nu\in\GT_N,
$$
and the set of all characters of $U(N)$ is an infinite-dimensional simplex; its
vertices are the characters $\wt\chi_\nu$.

The extreme characters of $U(\infty)$ can be viewed as analogs of
characters $\wt\chi_\nu$.

The representation-theoretic meaning of the extreme characters is that they
correspond to {\it finite factor representations\/} of $U(\infty)$; see
\cite{Vo76}.

\begin{proposition}\label{propF.1}
There is a natural bijective correspondence between the  characters of the
group $U(\infty)$ and the coherent systems on the graph\/ $\GT$, which also
induces a bijection between the extreme characters and the points of the
boundary $\pd(\GT)$.
\end{proposition}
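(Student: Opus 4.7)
The plan is to build the bijection by restriction, using the branching rule \eqref{eqF.3} as the single bridge between the representation-theoretic side and the combinatorial side. Let $\chi$ be a character of $U(\infty)$. For each $N$, the restriction $\chi|_{U(N)}$ is continuous, central, positive definite, and equals $1$ at $e$, hence is a character of the compact group $U(N)$. The classical fact that the extreme points of the set of characters of $U(N)$ are precisely the normalized irreducibles $\wt\chi_\nu$ (a consequence of Peter--Weyl together with orthogonality of irreducible characters) yields a unique probability measure $M_N$ on $\GT_N$ such that
\[
\chi\big|_{U(N)}=\sum_{\nu\in\GT_N}M_N(\nu)\,\wt\chi_\nu.
\]
Dividing the branching rule \eqref{eqF.3} by $\Dim_N\nu$ gives
\[
\wt\chi_\nu\big|_{U(N-1)}=\sum_{\la\prec\nu}\frac{\Dim_{N-1}\la}{\Dim_N\nu}\,\wt\chi_\la=\sum_{\la\in\GT_{N-1}}\La^N_{N-1}(\nu,\la)\,\wt\chi_\la.
\]
Substituting this into the decomposition of $\chi|_{U(N)}$, comparing with the (unique) decomposition of $\chi|_{U(N-1)}$, and equating coefficients produces the coherence relation $M_N\La^N_{N-1}=M_{N-1}$. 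This defines a map $\chi\mapsto\{M_N\}$ from characters to coherent systems.

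For the converse, start from a coherent system $\{M_N\}$ and set $\chi^{(N)}:=\sum_{\nu\in\GT_N}M_N(\nu)\,\wt\chi_\nu$ on $U(N)$. The same two identities used above, now read in reverse, show $\chi^{(N)}|_{U(N-1)}=\chi^{(N-1)}$, so the $\chi^{(N)}$ assemble into a single function $\chi$ on $U(\infty)=\bigcup_N U(N)$. By construction $\chi(e)=1$ and $\chi$ is continuous in the inductive limit topology; being a convex combination of positive definite functions on $U(N)$, each $\chi^{(N)}$ is positive definite, hence so is $\chi$. To check that $\chi$ is central on $U(\infty)$, observe that any two $U(\infty)$-conjugate elements lie in a common $U(M)$ and are already conjugate there, so the values of $\chi^{(M)}$ on them coincide. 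Thus $\chi$ is a character, and the two constructions are visibly inverse to each other.

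Finally, both maps $\chi\mapsto\{M_N\}$ and $\{M_N\}\mapsto\chi$ are affine, since convex combinations in either setting correspond under restriction or summation. An affine bijection between convex sets preserves extreme points in both directions, so extreme characters correspond exactly to extreme coherent systems, i.e.\ to points of $\pd(\GT)$.

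I expect the main delicate point to be the uniqueness of the decomposition $\chi|_{U(N)}=\sum_\nu M_N(\nu)\wt\chi_\nu$ with $M_N$ a probability measure: this rests on the fact that the normalized irreducible characters $\{\wt\chi_\nu\}$ are precisely the extreme points of the compact convex set of characters of $U(N)$. Once this is granted, the coherence relation is a direct algebraic consequence of the branching rule, and the remaining verifications (continuity, positive definiteness, centrality on $U(\infty)$) are routine checks that exploit the compatibility of $\chi^{(N)}$ with restriction.
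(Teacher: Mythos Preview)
Your argument is correct and follows essentially the same route as the paper: restrict $\chi$ to each $U(N)$, expand uniquely in the normalized irreducibles $\wt\chi_\nu$, and use the branching rule \eqref{eqF.3} to obtain the coherence relation, then observe that the resulting affine bijection carries extreme points to extreme points. The paper's proof is a terse sketch of exactly this reasoning (with a pointer to \cite{Ols03} for details), so your version simply fills in the verifications that the paper leaves implicit.
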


\begin{proof}
If $\chi$ is a character of $U(\infty)$, then for every $N=1,2, \dots$ the
restriction $\chi$ to $U(N)$ is a convex combination of normalized characters
$\wt\chi_\nu$. The corresponding coefficients, say $M_N(\nu)$, are nonnegative
and sum to 1, so that they determine a probability distribution $M_N$ on
$\GT_N$. Further, the family $\{M_N:N=1,2,\dots\}$ is a coherent system. The
correspondence $\chi\to \{M_N\}$ defined in this way is a bijection of the set
of characters of $U(\infty)$ onto the set of coherent systems, which is also an
isomorphism of convex sets. This entails a bijection between the extreme points
of the both sets, that is, the extreme characters and and the points of
$\pd(\GT)$.

For more detail, see \cite{Ols03} and especially Proposition 7.4 therein.
\end{proof}

Informally, Proposition \ref{propF.1} says that the chain \eqref{eqF.1} is dual
to the chain \eqref{eqF.2} and the boundary $\pd(\GT)$ is a kind of dual object
to $U(\infty)$.

\subsection{The space $\Om$ and the function $\Phi(u;\om)$}
Let $\R_+\subset\R$ denote the set of nonnegative real numbers, $\R_+^\infty$
denote the product of countably many copies of $\R_+$, and set
$$
\R_+^{4\infty+2}=\R_+^\infty\times\R_+^\infty\times\R_+^\infty\times\R_+^\infty
\times\R_+\times\R_+.
$$
Let $\Om\subset\R_+^{4\infty+2}$ be the subset of sextuples
$$
\om=(\al^+,\be^+;\al^-,\be^-;\de^+,\de^-)
$$
such that
\begin{gather*}
\al^\pm=(\al_1^\pm\ge\al_2^\pm\ge\dots\ge 0)\in\R_+^\infty,\quad
\be^\pm=(\be_1^\pm\ge\be_2^\pm\ge\dots\ge 0)\in\R_+^\infty,\\
\sum_{i=1}^\infty(\al_i^\pm+\be_i^\pm)\le\de^\pm, \quad \be_1^++\be_1^-\le 1.
\end{gather*}
Equip $\R_+^{4\infty+2}$ with the product topology. An important fact is that,
in the induced topology, $\Om$ is a locally compact space. Moreover, it is
metrizable and separable. Any subset in $\Om$ of the form
$\de^++\de^-\le\const$ is compact, which shows that a sequence of points in
$\Om$ goes to infinity if and only the quantity $\de^++\de^-$ goes to infinity.

Set
$$
\ga^\pm=\de^\pm-\sum_{i=1}^\infty(\al_i^\pm+\be_i^\pm)
$$
and note that $\ga^+,\ga^-$ are nonnegative. For $u\in\C^*$ and $\om\in\Om$ set
\begin{equation}\label{eqF.11}
\Phi(u;\om)= e^{\ga^+(u-1)+\ga^-(u^{-1}-1)}
\prod_{i=1}^\infty\frac{1+\be_i^+(u-1)}{1-\al_i^+(u-1)}
\,\frac{1+\be_i^-(u^{-1}-1)}{1-\al_i^-(u^{-1}-1)}.
\end{equation}

Here are some properties of $\Phi(u;\om)$ as a function in variable $u$:

For any fixed $\om$, this is a meromorphic function in $u\in\C^*$ with poles on
$(0,1)\cup(1,+\infty)$. The poles do not accumulate to $1$, so that the
function is holomorphic in a neighborhood of the unit circle
$$
\T:=\{u\in\C: |u|=1\}.
$$
Obviously,
$$
\Phi(1;\om)=1 \qquad \forall\om\in\Om.
$$
In particular, $\Phi(u;\om)$ is well defined and continuous on $\T$.

\begin{proposition}\label{propF.6} One has
$$
|\Phi(u;\om)|\le 1 \quad \textrm{for $u\in\T$}.
$$
\end{proposition}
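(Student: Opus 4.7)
The plan is to reduce the inequality to a factor-by-factor estimate on the unit circle, using the elementary identity
$$
|e^{i\theta}-1|^2 = 2(1-\cos\theta), \qquad e^{-i\theta}-1 = \overline{e^{i\theta}-1},
$$
so that for $u=e^{i\theta}\in\T$ we have $|u^{-1}-1|=|u-1|$ and $\operatorname{Re}(u-1)=\operatorname{Re}(u^{-1}-1)=\cos\theta-1\le 0$.

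First I would handle the exponential prefactor: since $\ga^+,\ga^-\ge 0$,
$$
|e^{\ga^+(u-1)+\ga^-(u^{-1}-1)}| = e^{(\ga^++\ga^-)(\cos\theta-1)} \le 1.
$$
Next, for each $i$, a direct computation of the squared modulus yields
$$
\left|\frac{1+\be_i^+(u-1)}{1-\al_i^+(u-1)}\right|^2 = \frac{1-2(1-\cos\theta)\,\be_i^+(1-\be_i^+)}{1+2(1-\cos\theta)\,\al_i^+(1+\al_i^+)},
$$
and I would observe that the analogous identity, with $u$ replaced by $u^{-1}$, holds for the factors involving $\al_i^-,\be_i^-$ (this uses $|u^{-1}-1|=|u-1|$).

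The key input — and the only place the definition of $\Om$ really enters — is the bound $\be_i^\pm\in[0,1]$. This follows because the coordinates $\be_1^\pm\ge\be_2^\pm\ge\dots$ are nonincreasing and the constraint $\be_1^++\be_1^-\le 1$ forces $\be_1^\pm\le 1$. Consequently $\be_i^\pm(1-\be_i^\pm)\ge 0$, so each numerator above is $\le 1$. Simultaneously $\al_i^\pm(1+\al_i^\pm)\ge 0$ makes each denominator $\ge 1$, so every factor in the infinite product has modulus $\le 1$.

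Finally I would multiply the factorwise bounds. Convergence of the infinite product on $\T$ is already guaranteed by the discussion preceding the proposition (the poles do not accumulate at $1$), so one may pass to the limit from finite truncations without issue. The only potential subtlety is justifying $\be_i^\pm\le 1$ for all $i$ rather than just $i=1$; this is the one place where both the monotonicity of the sequences and the coupling inequality $\be_1^++\be_1^-\le 1$ in the definition of $\Om$ are essential, and without them individual factors could exceed unity.
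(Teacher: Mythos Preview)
Your proposal is correct and follows exactly the same strategy as the paper: bound each factor in the product \eqref{eqF.11} by $1$ in modulus on $\T$. The paper's proof simply records the three inequalities \eqref{eqF.12} without computation, whereas you supply the explicit squared-modulus identities (which in fact reappear later in the paper, in Step~2 of the proof of Proposition~\ref{propF.4}) and spell out why $\be_i^\pm\in[0,1]$ follows from the constraints defining $\Om$.
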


\begin{proof}
Indeed, the claim actually holds for every factor in \eqref{eqF.11}:
\begin{equation}\label{eqF.12}
|(1-\alpha_i^\pm (u^{\pm 1}-1))^{-1}|\le1, \qquad |1+\beta_i^\pm (u^{\pm
1}-1)|\le1,\qquad |e^{\gamma^\pm(u^{\pm 1}-1)}|\le1.
\end{equation}
\end{proof}

\begin{proposition}\label{propF.3}
Different $\om$'s correspond to different functions $\Phi(\,\cdot\,,\om)$.
\end{proposition}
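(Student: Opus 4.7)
The plan is to reconstruct $\om=(\al^+,\be^+;\al^-,\be^-;\de^+,\de^-)$ step by step from $\Phi(\,\cdot\,;\om)$ viewed as a meromorphic function on $\C^*$. First I would observe that among the factors in \eqref{eqF.11}, only the denominators $1-\al_i^+(u-1)$ and $1-\al_i^-(u^{-1}-1)$ can vanish on $\C^*$: they vanish at $u=1+1/\al_i^+\in(1,\infty)$ and $u=\al_i^-/(1+\al_i^-)\in(0,1)$ respectively, while all other factors and the exponential are non-vanishing on $(0,\infty)$. Hence the pole set of $\Phi$ on $\C^*$, with multiplicities, determines the decreasing sequences $\al^+$ and $\al^-$.

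Having the $\al^\pm$, I would form the now-known function
\[
\Psi(u):=\Phi(u;\om)\prod_i(1-\al_i^+(u-1))(1-\al_i^-(u^{-1}-1))=e^{\ga^+(u-1)+\ga^-(u^{-1}-1)}f^+(u)g^-(u^{-1}),
\]
with $f^\pm(z):=\prod_i(1+\be_i^\pm(z-1))$. A short estimate shows that on the positive real axis $\log f^\pm(r)=o(r)$ as $r\to+\infty$ (split $\sum_i\log(1+\be_i^\pm(r-1))$ into $i\le N(r)$ and $i>N(r)$ and exploit $\sum\be_i^\pm<\infty$), and $g^-(u^{-1})\to g^-(0)$ as $u\to+\infty$. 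Therefore $\ga^+=\lim_{u\to+\infty}u^{-1}\log\Psi(u)$, and symmetrically $\ga^-=\lim_{u\to 0^+}u\log\Psi(u)$, so $\ga^\pm$ are identified. Dividing out the exponential leaves the known function $Q(u):=f^+(u)g^-(u^{-1})$; the task reduces to showing that $Q$ uniquely determines $f^+$ and $g^-$. Suppose $Q=Q'$ for a second sextuple $\om'$. The ratio $R(u):=f^+(u)/f'^+(u)=g'^-(u^{-1})/g^-(u^{-1})$ is meromorphic on $\C$ from the first equality and on $\bar\C\setminus\{0\}$ from the second, so it is meromorphic on $\bar\C$, hence rational, with zeros and poles on $(-\infty,0]$ and $R(1)=1$. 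Since $f^+$ is the unique genus-zero entire function with zeros $\{1-1/\be_i^+\}$ and $f^+(1)=1$ (and likewise for $g^-$), rationality of $R$ combined with the rigid form of the $f^\pm, g^\pm$ forces $R\equiv 1$, whence $\be^+=\be'^+$ and $\be^-=\be'^-$. Finally $\de^\pm=\ga^\pm+\sum_i(\al_i^\pm+\be_i^\pm)$ is determined.

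The main obstacle is this last uniqueness step. The zeros of $f^+(u)$ and of $u\mapsto g^-(u^{-1})$ both live on $(-\infty,0)$ and can coincide exactly when $\be_i^++\be_j^-=1$, which the constraint $\be_1^++\be_1^-\le 1$ together with monotonicity restricts to the single boundary case $\be_1^++\be_1^-=1$. The zero set alone does not disentangle $f^+$ from $g^-(u^{-1})$: different labellings of the zeros give different legitimate $\om$'s whose $Q$'s happen to agree on the zero set but disagree elsewhere. The crucial point is that $f^+$ is entire in $u$ while $u\mapsto g^-(u^{-1})$ is holomorphic at $u=\infty$, so the factorisation $Q=f^+\cdot(g^-\circ\mathrm{inv})$ is a Wiener--Hopf-type splitting across the essential singularities at $0$ and $\infty$; rationality of $R$ combined with the genus-zero Hadamard form and the normalisation at $u=1$ is what supplies the missing rigidity.
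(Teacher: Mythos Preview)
Your overall strategy—recover $\al^\pm$ from the poles, then $\ga^\pm$ from the exponential growth of $\Psi$ along the positive reals, then $\be^\pm$ from the residual product $Q(u)=f^+(u)\,g^-(u^{-1})$—is sound, and the first two reductions are carried out correctly. (In particular, your $\log f^+(r)=o(r)$ argument via the split $i\le N(r)$ versus $i>N(r)$ is fine.) The paper itself offers no independent proof here, merely referring to \cite[Theorem 5.1, Step 3]{OO98}, so your attempt is already more self-contained.

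The genuine gap is exactly where you flag it: the assertion that rationality of $R=f^+/f'^+=g'^-(u^{-1})/g^-(u^{-1})$ forces $R\equiv1$. Invoking ``Wiener--Hopf-type splitting'' and ``genus-zero Hadamard form'' is a description of what ought to work, not an argument; as written nothing prevents $R$ from being a nontrivial rational function with zeros and poles on $(-\infty,0)$. You have not yet used the constraint $\be_1^++\be_1^-\le1$ in a way that bites.

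Here is one way to close it. Suppose $R\not\equiv1$, so $R$ has at least one zero and one pole in $\bar\C$. Take the rightmost zero $u_0$ and the rightmost pole $v_0$ of $R$ in $(-\infty,0)$. From $R=f^+/f'^+$, the point $u_0$ is a zero of $f^+$, so some $\be_i^+=1/(1-u_0)$; from $R=g'^-(u^{-1})/g^-(u^{-1})$, the point $v_0$ is a pole, hence a zero of $g^-(u^{-1})$, so some $\be_j^-=1-1/(1-v_0)$. Then
\[
\be_1^++\be_1^-\;\ge\;\frac1{1-u_0}+1-\frac1{1-v_0},
\]
and the constraint $\be_1^++\be_1^-\le1$ forces $v_0\ge u_0$. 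Swapping the roles of $\om$ and $\om'$ (i.e.\ reading the zero $u_0$ from the $g'^-$ side and the pole $v_0$ from the $f'^+$ side) gives $u_0\ge v_0$. Hence $u_0=v_0$, impossible. The boundary cases where zeros or poles of $R$ sit at $0$ or $\infty$ (i.e.\ some $\be_1^\pm=1$) require a short separate check, but reduce to the same mechanism. Without an argument of this type, your Step~3 is an assertion rather than a proof.
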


\begin{proof}
See \cite[Proof of Theorem 5.1, Step 3]{OO98}. Here the condition
$\be_1^++\be_1^-\le1$ plays the decisive role.
\end{proof}

\begin{proposition}\label{propF.5}
There exists a homeomorphism $S:\Om\to\Om$ such that
$$
\Phi(u;S\om)=u\Phi(u;\om).
$$
\end{proposition}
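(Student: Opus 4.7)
My plan is to write $S$ down explicitly, the guiding algebraic identity being
\[
u\bigl(1+b(u^{-1}-1)\bigr)=1+(1-b)(u-1),\qquad b\in[0,1],
\]
which converts a $-$-type linear factor with parameter $b$ into a $+$-type linear factor with parameter $1-b$ upon multiplication by $u$. Since the $\al$-denominators and the factors $e^{\ga^\pm(u^{\pm1}-1)}$ cannot by themselves absorb a plain $u$, this identity is essentially forced, and the constraint $\be_1^++\be_1^-\le1$ is exactly what allows $1-\be_1^-$ to be inserted as the new leading entry of $\be^+$.

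Guided by this, I define $S\om=\om'$ by
\begin{gather*}
{\al'}^\pm=\al^\pm,\qquad
{\be'}^+=(1-\be_1^-,\,\be_1^+,\,\be_2^+,\dots),\qquad
{\be'}^-=(\be_2^-,\,\be_3^-,\dots),\\
{\de'}^+=\de^++1-\be_1^-,\qquad {\de'}^-=\de^--\be_1^-.
\end{gather*}
The first task is to check $\om'\in\Om$: monotonicity of ${\be'}^+$ amounts to $\be_1^++\be_1^-\le1$; monotonicity of ${\be'}^-$ is inherited; the inequality ${\be'}_1^++{\be'}_1^-\le1$ reduces to $\be_2^-\le\be_1^-$; a direct computation shows $\ga^\pm$ are preserved, so the summability $\sum(\al^\pm+\be^\pm)\le\de^\pm$ transfers automatically; and nonnegativity of ${\de'}^-$ follows from $\de^-\ge\sum(\al^-+\be^-)\ge\be_1^-$. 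The second task is the functional identity $\Phi(u;S\om)=u\Phi(u;\om)$: since $\al^\pm$ and $\ga^\pm$ are unchanged, after cancelling the common factors the required equality becomes
\[
\frac{1+(1-\be_1^-)(u-1)}{1+\be_1^-(u^{-1}-1)}=u,
\]
which is precisely the displayed identity above applied with $b=\be_1^-$.

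For the homeomorphism property I would write down the candidate inverse $T$ obtained by the $(u\leftrightarrow u^{-1},\,\pm\leftrightarrow\mp)$ symmetry,
\begin{gather*}
{\al'}^\pm=\al^\pm,\qquad
{\be'}^-=(1-\be_1^+,\,\be_1^-,\,\be_2^-,\dots),\qquad
{\be'}^+=(\be_2^+,\,\be_3^+,\dots),\\
{\de'}^-=\de^-+1-\be_1^+,\qquad {\de'}^+=\de^+-\be_1^+,
\end{gather*}
and verify $TS=ST=\mathrm{id}$ coordinate-by-coordinate: the two complementations $b\mapsto1-b$ and the shifts cancel cleanly. Continuity of $S$ and $T$ in the product topology on $\Om\subset\R_+^{4\infty+2}$ is immediate, because every coordinate of the image depends (in fact linearly) on finitely many coordinates of the source. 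The only real obstacle is bookkeeping — making sure the insertion/removal at the head of $\be^\pm$ does not violate any of the defining inequalities of $\Om$ — but each such check reduces transparently to a hypothesis already present in the definition.
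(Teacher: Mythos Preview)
Your proof is correct and follows essentially the same approach as the paper: both rest on the identity $u\bigl(1+b(u^{-1}-1)\bigr)=1+(1-b)(u-1)$ and define $S$ by removing $\be_1^-$ and prepending $1-\be_1^-$ to $\be^+$. Your version is in fact more careful than the paper's sketch: you make explicit the necessary adjustments ${\de'}^+=\de^++1-\be_1^-$ and ${\de'}^-=\de^--\be_1^-$ (the paper's phrase ``all the remaining coordinates remain intact'' glosses over this), and you spell out the inverse $T$ and the continuity argument, which the paper leaves to the reader.
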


\begin{proof}
Indeed, observe that
$$
u(1+\be(u^{-1}-1))=1+(1-\be)(u-1).
$$
It follows that $S$ has the following form: it deletes $\be^-_1$ from the list
of the $\be^-$-coordinates of $\om$ (so that $\be^-_2$ becomes coordinate number 1,
$\be^-_3$ becomes coordinate number 2, etc.) and adds a new $\be^+$ coordinate equal to
$1-\be^-_1$. Note that this new coordinate is $\ge\be^+_1$ (due to the
condition $\be^+_1+\be^-_1\le1$), so that it acquires number 1, $\be^+_1$
becomes coordinate number 2, etc. All the remaining coordinates remain intact.
\end{proof}

\subsection{The functions $\varphi_\nu(\om)$ and the Markov kernels
$\La^\infty_N$}\label{sectF.1}

Since $\Phi(\,\cdot\,;\om)$ is regular in a neighborhood of $\T$, it can be
expanded into a Laurent series:
$$
\Phi(u;\om)=\sum_{n=-\infty}^\infty \varphi_n(\om)u^n,
$$
where
\begin{equation}\label{eqF.10}
\varphi_n(\omega)=\frac 1{2\pi i}\oint_\T \Phi(u;\om)\frac {du}{u^{n+1}},\qquad
n\in\Z.
\end{equation}
The Laurent coefficients $\varphi_n(\om)$ play an important role in what
follows.

More generally,  we associate with every $\nu\in\GT_N$, $N=1,2,\dots$, the
following function on $\Om$
$$
\varphi_\nu(\om):=\det[\varphi_{\nu_i-i+j}(\om)]_{i,j=1}^N.
$$

Recall that the {\it rational Schur function\/} in $N$ variables is the Laurent
polynomial indexed by a signature $\nu\in\GT_N$ and defined by
$$
S_\nu(u_1,\dots,u_N)=\frac{\det\bigl[u_i^{\nu_j+N-j}\bigr]_{i,j=1}^N}{\prod\limits_{1\le
i<j\le N}(u_i-u_j)}\,.
$$
These Laurent polynomials form a basis in the algebra
$\C[u_1^{\pm1},\dots,u_N^{\pm1}]^\sym$ of symmetric Laurent polynomials.

\begin{proposition}
For $N=1,2,\dots$ the following expansion holds
\begin{equation}\label{eqF.5}
\Phi(u_1;\om)\dots\Phi(u_N;\om)=\sum_{\nu\in\GT_N}\varphi_\nu(\om)
S_\nu(u_1,\dots,u_N),
\end{equation}
where the series converges in a neighborhood of\/ $\T\subset\C^*$ depending on
$\om\in\Om$.
\end{proposition}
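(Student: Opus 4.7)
The plan is to reduce the claim to an infinite Cauchy--Binet identity once the Weyl denominator has been cleared. Write $V(u)=\prod_{i<j}(u_i-u_j)=\det[u_i^{N-j}]_{i,j=1}^N$ and $a_\nu(u)=\det[u_i^{\nu_j+N-j}]$, so that $S_\nu=a_\nu/V$ by definition. It then suffices to establish
\begin{equation*}
\Phi(u_1;\om)\cdots\Phi(u_N;\om)\cdot V(u)=\sum_{\nu\in\GT_N}\varphi_\nu(\om)\,a_\nu(u)
\end{equation*}
and divide through on the open set where the $u_i$ are pairwise distinct.

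By multilinearity of the determinant in rows, the left-hand side equals $\det[\Phi(u_i;\om)\,u_i^{N-j}]_{i,j=1}^N$; substituting the Laurent expansion of $\Phi$ and shifting the summation index, the $(i,j)$-entry becomes $\sum_{\ell\in\Z}\varphi_{\ell-N+j}(\om)\,u_i^\ell$. This displays the matrix as the product $AB$ of the $N\times\Z$ matrix $A=[u_i^\ell]$ and the $\Z\times N$ matrix $B=[\varphi_{\ell-N+j}(\om)]$. The (infinite) Cauchy--Binet formula then gives
\begin{equation*}
\det(AB)=\sum_{\lambda_1>\dots>\lambda_N}\det[u_i^{\lambda_j}]\cdot\det[\varphi_{\lambda_i-N+j}(\om)],
\end{equation*}
the signs from putting the summation subset in decreasing rather than increasing order cancelling between the two factors. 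The substitution $\lambda_j=\nu_j+N-j$ identifies strictly decreasing $\lambda$-sequences bijectively with $\GT_N$ and satisfies $\lambda_i-N+j=\nu_i-i+j$, so the right-hand side rewrites as $\sum_{\nu\in\GT_N}\varphi_\nu(\om)\,a_\nu(u)$, as required.

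The main obstacle is justifying Cauchy--Binet in this infinite setting, i.e., pinning down a domain for the $u_i$ on which everything converges absolutely. Since $\Phi(\,\cdot\,;\om)$ is meromorphic on $\C^*$ with poles only on $(0,1)\cup(1,\infty)$ and holomorphic on some annulus $A_\om=\{r_-<|u|<r_+\}\supset\T$, its Laurent series converges absolutely and uniformly on compact subannuli of $A_\om$. For $(u_1,\dots,u_N)\in A_\om^N$ the iterated sum $\sum_{n_1,\dots,n_N}|\varphi_{n_1}(\om)\cdots\varphi_{n_N}(\om)\,u_1^{n_1}\cdots u_N^{n_N}|$ is finite; this legitimizes expanding $\det[\Phi(u_i;\om)u_i^{N-j}]$ via the permutation sum, interchanging sums freely, and regrouping tuples $(\ell_1,\dots,\ell_N)$ by their ordered support. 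Tuples with any repeated $\ell_i$ cancel by the antisymmetry of the $B$-determinant, leaving exactly the Cauchy--Binet sum above. This establishes \eqref{eqF.5} on $A_\om^N$ (minus the diagonals), an open neighborhood of $\T^N$ depending on $\om$.
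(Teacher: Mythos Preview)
Your argument is correct and is exactly the standard computation the paper has in mind; the paper does not actually give a proof but simply cites \cite[Lemme~2]{Vo76} and remarks that the identity holds as a formal series for any two-sided Laurent expansion in place of $\Phi$. Your Cauchy--Binet manipulation is the natural way to unpack that formal identity, and your convergence justification via the annulus of holomorphy of $\Phi(\,\cdot\,;\om)$ is the right way to upgrade it to an honest equality of functions on a neighborhood of $\T^N$.
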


\begin{proof}
This is a very simple but fundamental fact. See, e.g., \cite[Lemme 2]{Vo76}.
\end{proof}

Note that if \eqref{eqF.5} is interpreted as an identity of formal series,
without addressing the question of convergence, then the result holds for an
arbitrary two-sided infinite formal power series in $u$ in place of
$\Phi(u;\om)$. Further, if the series is expanded on nonnegative powers of $u$
only and the constant term is equal to 1, then the product in the left-hand
side may be infinite. In that case the right-hand side becomes an expansion on Schur
symmetric functions in infinitely many variables, indexed by arbitrary
partitions. See, e.g., \cite[pp. 99-100]{Li50}.

\begin{proposition}\label{propF.7}
The functions $\varphi_\nu(\om)$ are nonnegative.
\end{proposition}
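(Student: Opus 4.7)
The plan is to interpret $\varphi_\nu(\om)$ as a minor of a bi-infinite totally nonnegative Toeplitz matrix. The starting point is the natural factorization
\[
\Phi(u;\om) = F^+(u)\cdot F^-(u^{-1}), \qquad F^\pm(u) = e^{\ga^\pm(u-1)}\prod_{i\ge 1}\frac{1+\be_i^\pm(u-1)}{1-\al_i^\pm(u-1)},
\]
in which each $F^\pm$ is holomorphic in a disc $|u|<R^\pm$ with $R^\pm>1$, so has a Taylor expansion $F^\pm(u) = \sum_{n\ge 0}f^\pm_n u^n$ with exponentially decaying coefficients.

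First I would verify that each sequence $(f^\pm_n)_{n\ge 0}$ is a P\'olya frequency sequence. Rewriting each elementary factor as
\[
e^{\ga(u-1)} = e^{-\ga}e^{\ga u}, \qquad
\frac{1+\be(u-1)}{1-\al(u-1)} = \frac{1-\be}{1+\al}\cdot\frac{1+\tfrac{\be}{1-\be}u}{1-\tfrac{\al}{1+\al}u}
\]
puts $F^\pm$ into classical Aissen--Edrei--Schoenberg--Whitney form $c\cdot e^{\tilde\ga u}\prod_i(1+\tilde\be_i u)/(1-\tilde\al_i u)$ with $c>0$ and $\tilde\ga,\tilde\al_i,\tilde\be_i\ge 0$, convergent because $\sum_i(\al_i^\pm+\be_i^\pm)\le\de^\pm<\infty$. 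By the AESW theorem, this generating function corresponds to a PF sequence; equivalently, the upper-triangular bi-infinite Toeplitz matrix $T^+_{ij}=f^+_{j-i}$ (zero when $j<i$) is totally nonnegative, and likewise for the lower-triangular counterpart $T^-_{ij}=f^-_{i-j}$.

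Next I would assemble these into a Toeplitz description of $\Phi$. Extracting the coefficient of $u^n$ from $\Phi(u;\om) = F^+(u)F^-(u^{-1})$ gives $\varphi_n(\om) = \sum_{k\ge\max(0,n)}f^+_k f^-_{k-n}$; reading this for all index pairs identifies the Toeplitz matrix $T(\om)$ with $T(\om)_{ij}=\varphi_{j-i}(\om)$ as the matrix product $T^+T^-$. The product is well-defined entrywise thanks to the exponential decay of $f^\pm_k$. By the Cauchy--Binet formula, any finite minor of $T(\om)$ equals $\sum_K\det(T^+_{I,K})\det(T^-_{K,J})$, a sum of nonnegative terms (with absolute convergence again from the exponential decay), so $T(\om)$ is itself totally nonnegative. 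Finally I would exhibit $\varphi_\nu(\om)$ as a minor of $T(\om)$: taking row indices $a_i = i-\nu_i$ and column indices $b_j = j$ for $i,j=1,\dots,N$, the signature inequalities $\nu_1\ge\dots\ge\nu_N$ translate exactly into $a_1<\dots<a_N$, and $b_j - a_i = \nu_i - i + j$, so
\[
\varphi_\nu(\om) = \det\bigl[\varphi_{\nu_i-i+j}(\om)\bigr]_{i,j=1}^N = \det\bigl[T(\om)_{a_i,b_j}\bigr]_{i,j=1}^N \ge 0.
\]

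The main obstacle is carrying out total positivity rigorously in the bi-infinite setting: one must justify convergence of the infinite Cauchy--Binet sum and ensure that passing from finite-rank approximations (truncations of $T^+, T^-$) preserves nonnegativity of minors. Both points follow from the exponential decay of $f^\pm_n$, which in turn comes from the holomorphy of $F^\pm$ strictly beyond the unit disc. An alternative to invoking AESW is a direct TP check for the three elementary building blocks (the Poisson weights $e^{-\ga}\ga^n/n!$, the two-term sequence $(1-\be,\be,0,\ldots)$, and the geometric sequence from $1/(1-\al' u)$) followed by an inductive use of Cauchy--Binet for triangular Toeplitz products and a limit argument.
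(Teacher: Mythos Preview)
Your argument is correct. The paper itself does not supply a proof but simply refers to Voiculescu~\cite[Proposition~2]{Vo76}. Voiculescu's route is representation-theoretic: he verifies that the function on $U(\infty)$ given by $U\mapsto\prod_i\Phi(u_i;\om)$ (with $u_i$ the eigenvalues of $U$) is positive definite, essentially by realizing each elementary factor of $\Phi$ via an explicit unitary representation and tensoring. Positive definiteness of the restriction to $U(N)$ then forces the coefficients $\varphi_\nu(\om)$ in the expansion over irreducible characters to be nonnegative.

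Your approach bypasses representation theory and works directly with total positivity, factoring the bi-infinite Toeplitz matrix $[\varphi_{j-i}(\om)]$ as a product $T^+T^-$ of one-sided totally nonnegative Toeplitz matrices and applying Cauchy--Binet. This is the Aissen--Edrei--Schoenberg--Whitney viewpoint, and it in fact delivers more than the proposition asserts: nonnegativity of \emph{all} minors of the Toeplitz matrix, not only those with consecutive column indices (the paper has to invoke a separate observation of Boyer~\cite[p.~218]{Boy83} to pass from consecutive-column minors to arbitrary ones). One small caveat: your rewriting of $1+\be(u-1)$ as $(1-\be)\bigl(1+\tfrac{\be}{1-\be}u\bigr)$ breaks down when $\be=1$, which the constraints allow for $\be_1^+$ or $\be_1^-$; but then the factor is simply $u$, contributing a harmless shift in the AESW normal form.
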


\begin{proof}
See \cite[Proposition 2]{Vo76}.
\end{proof}

This fine property means that for any $\om$, all minors of the two-sided
infinite Toeplitz matrix $[\varphi_{i-j}(\om)]_{i,j\in\Z}$ extracted from
several consecutive columns are nonnegative. But this actually implies that
{\it all\/} minors are nonnegative (see \cite[p.218]{Boy83}). That is, the
two-sided infinite sequence $\{\varphi_n(\om)\}_{n\in\Z}$ is {\it totally
positive\/}.

As is well known, the Laurent polynomials $S_\nu$ with $\nu\in\GT_N$ determine
the irreducible characters of $U(N)$ in the sense that $\chi_\nu=S_\nu$ on the
torus $\T^N=\T\times\dots\times\T$ identified with the subgroup of diagonal
matrices in $U(N)$. It follows that
\begin{equation}\label{eqF.6}
S_\nu(\,\underbrace{1,\dots,1}_{N}\,)=\Dim_N\nu
\end{equation}
and, more generally,
\begin{equation}\label{eqF.7}
S_\nu(u_1,\dots,u_K,\,\underbrace{1,\dots,1}_{N-K}\,)
=\sum_{\ka\in\GT_K}\Dim_{K,N}(\ka,\nu)S_\ka(u_1,\dots,u_K), \qquad K<N.
\end{equation}

Equalities \eqref{eqF.5}-\eqref{eqF.7} imply

\begin{proposition}\label{propF.2}
Set
\begin{equation}\label{eqF.4}
\La^\infty_N(\om,\nu)=\Dim_N\nu\cdot\varphi_\nu(\om),
\end{equation}
where $N=1,2,\dots$, $\om\in\Om$, and $\nu\in\GT_N$.

{\rm(i)} $\La^\infty_N$ is a Markov kernel, that is,
$\La^\infty_N(\om,\nu)\ge0$ for all $\om$ and $\nu$, and
\begin{equation}\label{eqF.8}
\sum_{\nu\in\GT_N}\La^\infty_N(\om,\nu)=1.
\end{equation}

{\rm(ii)} For $N>K\ge1$ there holds
\begin{equation}\label{eqF.9}
\La^\infty_N\La^N_K=\La^\infty_K.
\end{equation}
Or, in more detail,
\begin{equation}\label{eqF.16}
\sum_{\nu\in\GT_N}\La^\infty(\om,\nu)\La^N_K(\nu,\ka)=\La^\infty_K(\om,\ka),
\qquad \forall \om\in\Om, \quad \forall\ka\in\GT_K.
\end{equation}
\end{proposition}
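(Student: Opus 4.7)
The plan is to derive both parts from a single auxiliary identity: for $N>K\ge 1$, $\om\in\Om$, and $\ka\in\GT_K$,
\begin{equation*}
\varphi_\ka(\om)=\sum_{\nu\in\GT_N}\varphi_\nu(\om)\,\Dim_{K,N}(\ka,\nu).
\end{equation*}
To obtain it, I substitute $u_{K+1}=\dots=u_N=1$ in \eqref{eqF.5}. Since $\Phi(1;\om)=1$, the left-hand side collapses to $\Phi(u_1;\om)\cdots\Phi(u_K;\om)$, while on the right the branching relation \eqref{eqF.7} rewrites $S_\nu(u_1,\dots,u_K,1,\dots,1)$ as $\sum_\ka\Dim_{K,N}(\ka,\nu)\,S_\ka(u_1,\dots,u_K)$. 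Swapping the order of summation and comparing with the $K$-variable instance of \eqref{eqF.5}, the linear independence of $\{S_\ka\}_{\ka\in\GT_K}$ in $\C[u_1^{\pm1},\dots,u_K^{\pm1}]^{\sym}$ yields the identity. The termwise substitution is legitimate because the series in \eqref{eqF.5} converges absolutely in a neighborhood of $\T^N$ that contains the locus $(u_1,\dots,u_K,1,\dots,1)$.

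Granting this, part (ii) reduces to a one-line cancellation: using the explicit formulas for $\La^\infty_N$ and $\La^N_K$, the factor $\Dim_N\nu$ in the numerator of the former cancels against the denominator of the latter, and
\begin{equation*}
\sum_{\nu\in\GT_N}\La^\infty_N(\om,\nu)\La^N_K(\nu,\ka)
=\Dim_K\ka\sum_{\nu}\varphi_\nu(\om)\Dim_{K,N}(\ka,\nu)
=\Dim_K\ka\cdot\varphi_\ka(\om)=\La^\infty_K(\om,\ka).
\end{equation*}

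For part (i), nonnegativity of $\La^\infty_N$ is immediate from Proposition \ref{propF.7} and the positivity of $\Dim_N$. For the normalization \eqref{eqF.8} I specialize the auxiliary identity to $K=1$ and sum over $\ka=k\in\Z$: on the right I obtain $\sum_\nu\varphi_\nu(\om)\sum_{k}\Dim_{1,N}(k,\nu)=\sum_\nu\varphi_\nu(\om)\Dim_N\nu=\sum_\nu\La^\infty_N(\om,\nu)$, the inner sum collapsing to $\Dim_N\nu$ by the very definition of $\Dim_N$ (paths from any level-$1$ vertex to $\nu$). On the left I obtain $\sum_{k\in\Z}\varphi_k(\om)=\Phi(1;\om)=1$, where the last equality uses absolute convergence of the Laurent expansion of $\Phi(\,\cdot\,;\om)$ at $u=1$, guaranteed because $\Phi(\,\cdot\,;\om)$ is holomorphic in an annulus containing $\T$.

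I do not expect a genuine obstacle here; the entire argument is a manipulation of the Cauchy identity \eqref{eqF.5}. The only points that require minor care are the two convergence justifications flagged above, both of which follow in a standard way from the holomorphy of $\Phi(\,\cdot\,;\om)$ near $\T$ together with the product topology of absolute convergence for the multivariable series on the right of \eqref{eqF.5}.
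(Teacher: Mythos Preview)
Your argument is correct and essentially coincides with the paper's proof: both specialize $u_{K+1}=\dots=u_N=1$ in \eqref{eqF.5}, use $\Phi(1;\om)=1$ and \eqref{eqF.7}, and compare with the $K$-variable expansion. The only cosmetic difference is that the paper obtains \eqref{eqF.8} by setting \emph{all} $u_i=1$ in \eqref{eqF.5} directly (using \eqref{eqF.6}), whereas you reach the same conclusion via your $K=1$ identity summed over $k$.
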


\begin{proof}
The property $\La^\infty_N(\om,\nu)\ge0$ is ensured by Proposition
\ref{propF.7}.

Plug in $u_1=\dots=u_N=1$ into \eqref{eqF.5} and use the fact that
$\Phi(1;\om)=1$. Then, because of \eqref{eqF.6}, we get \eqref{eqF.8}.

Likewise, plug in $u_{K+1}=\dots=u_N=1$ into \eqref{eqF.5} and apply
\eqref{eqF.7}. Comparing the result with the expansion
$$
\Phi(u_1;\om)\dots\Phi(u_K;\om)=\sum_{\ka\in\GT_K}\varphi_{\ka}(\om)
S_\ka(u_1,\dots,u_K)
$$
we get \eqref{eqF.16}.

\end{proof}

\subsection{The Feller property}

For a locally compact metrizable separable space $X$, denote by $C_0(X)$ the
space of real-valued continuous functions vanishing at infinity. This is a
separable Banach space with respect to the supremum norm. In particular, the
definition makes sense for $X=\Om$ and also for $\GT_N$, since this a countable
discrete space. Let us interpret functions $f\in C_0(\GT_N)$ as column vectors.

\begin{proposition}\label{propF.4}
The functions $\varphi_n(\om)$, $n\in\Z$, are continuous functions on $\Om$
vanishing at infinity.
\end{proposition}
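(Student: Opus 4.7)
The plan is to handle both continuity and vanishing at infinity through the integral representation \eqref{eqF.10}, combined with a probabilistic reading of $\{\varphi_n(\om)\}_{n\in\Z}$.

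\emph{Continuity.} For $\om^{(k)}\to\om_0$ in the product topology, each coordinate converges, and I will show $\Phi(\,\cdot\,;\om^{(k)})\to\Phi(\,\cdot\,;\om_0)$ uniformly on $\T$. Each factor in \eqref{eqF.11} is jointly continuous in its parameters and in $u\in\T$, and the infinite product converges uniformly by a tail argument: truncating at index $N$, the remainder is controlled by
$$
\sum_{i>N}(\al_i^{(k)\pm}+\be_i^{(k)\pm})\;\le\;\de^{(k)\pm}-\sum_{i\le N}(\al_i^{(k)\pm}+\be_i^{(k)\pm}),
$$
whose limit (in $k$) is $\de^\pm-\sum_{i\le N}(\al_i^\pm+\be_i^\pm)$, arbitrarily small once $N$ is chosen large. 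Combined with $|\Phi|\le 1$ from \eqref{eqF.12}, dominated convergence in \eqref{eqF.10} yields continuity of $\varphi_n$.

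\emph{Vanishing at infinity.} By Proposition \ref{propF.7} and $\Phi(1;\om)=1$, $\{\varphi_n(\om)\}_{n\in\Z}$ is a probability distribution on $\Z$. Splitting $\Phi=\Phi^+\Phi^-$ into the $u$- and $u^{-1}$-factors of \eqref{eqF.11} identifies $\Phi^\pm(u;\om)$ as the probability generating function of the independent sum $X^\pm_\om=\mathrm{Poi}(\ga^\pm)+\sum_i\mathrm{Ber}(\be_i^\pm)+\sum_i\mathrm{Geo}(\al_i^\pm)$; hence $\varphi_n(\om)=\Pr(X^+_\om-X^-_\om=n)$, with $E[X^\pm_\om]=\de^\pm$ and $V^\pm(\om)=\ga^\pm+\sum\al_i^\pm(1+\al_i^\pm)+\sum\be_i^\pm(1-\be_i^\pm)$. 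Given $\om^{(k)}\to\infty$ in $\Om$ (so $\de^{+(k)}+\de^{-(k)}\to\infty$), I pass to a subsequence and consider two cases. \emph{(a) $V^++V^-$ stays bounded.} Monotonicity of $\be^\pm$ and the constraint $\be_1^++\be_1^-\le 1$ force the sets $\{i:\be_i^\pm>1/2\}$ to be disjoint initial segments, hence at most one is nonempty; combined with the elementary bound $\de^\pm\le 4V^\pm+|\{i:\be_i^\pm>1/2\}|$ this forces at most one of $\de^{+(k)},\de^{-(k)}$ to diverge. Assuming $\de^{+(k)}\to\infty$ while $\de^{-(k)}$ is bounded, Chebyshev on $X:=X^+-X^-$ gives $\varphi_n(\om^{(k)})\le (V^++V^-)/(\de^{+(k)}-\de^{-(k)}-n)^2\to 0$. \emph{(b) $V^++V^-\to\infty$.} Pass further so that, say, $V^+\to\infty$; a direct factor-by-factor estimate on $\T$ using $1-y\le e^{-y}$ and $(1+y)^{-1}\le e^{-y/(1+y)}$ shows $|\Phi^+(e^{i\theta};\om^{(k)})|\to 0$ for each $\theta\ne 0$. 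Together with $|\Phi^-|\le 1$ this gives $|\Phi(e^{i\theta};\om^{(k)})|\to 0$ pointwise on $\T\setminus\{1\}$, and dominated convergence in \eqref{eqF.10} (envelope $|\Phi|\le 1$) yields $\varphi_n(\om^{(k)})\to 0$.

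The delicate case is~(a): it is invisible to any pointwise bound on $|\Phi(e^{i\theta};\om)|$, since if $\be_1^+=\cdots=\be_M^+=1$ and every other parameter is zero then $\Phi(u;\om)=u^M$, so $|\Phi|\equiv 1$ on $\T$ while $\om\to\infty$ as $M\to\infty$. Consequently, the step I expect to require the most care is the disjoint-initial-segments structural lemma, in which the constraint $\be_1^++\be_1^-\le 1$ together with the monotonicity of the $\be^\pm$ sequences enters in an essential way.
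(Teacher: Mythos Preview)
Your approach is correct and takes a genuinely different route from the paper for the vanishing-at-infinity half. The paper argues purely analytically: after reducing to $\al_1^\pm$ bounded, it shows $\Phi(u;\om)\to0$ uniformly away from $u=1$ under the extra hypothesis $\be_1^\pm\le\tfrac12$, and then disposes of the case of many $\be^+_i>\tfrac12$ by applying the shift homeomorphism $S$ of Proposition~\ref{propF.5} (which trades each such $\be^+_i$ for a $\be^-$-coordinate below $\tfrac12$, at the cost of shifting the Laurent index) and finally deforming the contour in \eqref{eqF.10} to $|u|=R<1$. Your probabilistic reading $\varphi_n(\om)=\Pr(X^+_\om-X^-_\om=n)$, with mean $\de^+-\de^-$ and variance $V^++V^-$, replaces that shift/contour manoeuvre by a one-line Chebyshev bound in the bounded-variance case~(a); the constraint $\be_1^++\be_1^-\le1$ enters precisely as the reason why $\{i:\be_i^+>\tfrac12\}$ and $\{i:\be_i^->\tfrac12\}$ cannot both be nonempty, so that at most one of $\de^\pm$ can escape when $V^++V^-$ stays bounded. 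The paper's approach, in return, never leaves the analytic framework and yields decay uniform in $n$ along the way. One small repair in your case~(b): the inequality $(1+y)^{-1}\le e^{-y/(1+y)}$ is too weak when a single $\al_i^+$ dominates (e.g.\ $\al_1^{+(k)}=k$ with all other coordinates zero gives $V^+\to\infty$ but a bounded exponent); use instead the elementary bound $\prod_i(1+y_i)\ge 1+\sum_i y_i$ for the $\al$-factors, or first dispose of $\al_1^\pm$ unbounded as the paper does in its Step~1.
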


An immediate consequence of this result is the following

\begin{corollary}\label{corF.3}
For every $N=1,2,\dots$ the Markov kernel $\La^\infty_N$ is a Feller kernel,
meaning that the map $f\mapsto\La^\infty_Nf$ is a continuous {\rm(}actually,
contractive{\rm)} linear operator $C_0(\GT_N)\to C_0(\Om)$.
\end{corollary}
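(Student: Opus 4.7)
The plan is to deduce the corollary from Proposition \ref{propF.4} in three short steps: show each ``column'' $\La^\infty_N(\,\cdot\,,\nu)$ lies in $C_0(\Om)$, then establish contractivity from the probability-kernel identity \eqref{eqF.8}, and finally pass from finitely supported $f$ to general $f\in C_0(\GT_N)$ by density.

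First, I would fix $\nu\in\GT_N$ and examine $\La^\infty_N(\om,\nu)=\Dim_N\nu\cdot\varphi_\nu(\om)=\Dim_N\nu\cdot\det[\varphi_{\nu_i-i+j}(\om)]_{i,j=1}^N$. This is, up to the constant factor $\Dim_N\nu$, a finite $\Z$-linear combination of products of $N$ functions of the form $\varphi_n(\om)$. By Proposition \ref{propF.4} each such $\varphi_n$ lies in $C_0(\Om)$. Since $C_0(\Om)$ is an ideal in the algebra of bounded continuous functions on $\Om$, products and finite sums of elements of $C_0(\Om)$ remain in $C_0(\Om)$. Hence $\La^\infty_N(\,\cdot\,,\nu)\in C_0(\Om)$ for every $\nu$.

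Next, for $f\in C_0(\GT_N)$ define $(\La^\infty_Nf)(\om)=\sum_{\nu\in\GT_N}\La^\infty_N(\om,\nu)f(\nu)$. Using the probability-kernel property \eqref{eqF.8} together with nonnegativity of $\La^\infty_N(\om,\nu)$, we obtain
$$
|(\La^\infty_Nf)(\om)|\le\sum_{\nu\in\GT_N}\La^\infty_N(\om,\nu)|f(\nu)|\le\|f\|_\infty\sum_{\nu\in\GT_N}\La^\infty_N(\om,\nu)=\|f\|_\infty,
$$
so the operator norm is at most $1$, giving the contractivity claim.

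Finally, since $\GT_N$ is a countable discrete space, the functions of finite support are dense in $C_0(\GT_N)$. For such an $f$ with finite support $F\subset\GT_N$, the sum $\La^\infty_Nf=\sum_{\nu\in F}f(\nu)\La^\infty_N(\,\cdot\,,\nu)$ is a finite linear combination of elements of $C_0(\Om)$ by the first step, hence itself belongs to $C_0(\Om)$. For arbitrary $f\in C_0(\GT_N)$, pick a sequence $f_k$ of finitely supported functions with $\|f_k-f\|_\infty\to0$; then $\|\La^\infty_Nf_k-\La^\infty_Nf\|_\infty\to0$ by contractivity, and since $C_0(\Om)$ is closed in the supremum norm, $\La^\infty_Nf\in C_0(\Om)$ as well. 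Continuity of the map $f\mapsto\La^\infty_Nf$ is then immediate from the norm bound.

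The main ``obstacle'' is actually just cleanly transferring the pointwise information of Proposition \ref{propF.4} to the infinite sum appearing in $\La^\infty_Nf$; the determinantal formula for $\varphi_\nu(\om)$ reduces this to a problem about finitely many Laurent coefficients, and the probability-kernel property makes the density argument work without requiring any delicate uniform tail estimate on $\varphi_\nu(\om)$ as $\nu$ varies.
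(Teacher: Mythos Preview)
Your proof is correct and follows essentially the same approach as the paper. The paper's own argument is terse --- it simply notes that Proposition~\ref{propF.4} implies $\La^\infty_N(\,\cdot\,,\nu)\in C_0(\Om)$ for each fixed $\nu$ and declares this ``equivalent to the Feller property'' --- while you carefully unpack that equivalence via the contractivity bound and the density of finitely supported functions.
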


\begin{proof}[Proof of the corollary]
It follows from the proposition and the definition of the kernel that for
$\nu\in\GT_N$ fixed, the function $\om\mapsto\La^\infty_N(\om,\nu)$ is
continuous and vanishes at infinity. This is equivalent to the Feller property.
\end{proof}

\begin{proof}[Proof of the proposition]
The continuity is established in \cite[Proof of Theorem 8.1, Step
1]{Ols03}.

Now we must prove that for any fixed $n\in\Z$ and any sequence of points
$\{\om(k)\}$ in $\Om$ converging to infinity one has
$\lim_{k\to\infty}\varphi_n(\om(k))=0$. It is enough to prove a weaker claim
that the same limit relation holds for a subsequence in $\{\om(k)\}$. Below we
write $\al^{\pm}_i(k)$, $\be^{\pm}_i(k)$, $\de^\pm(k)$ for the coordinates of
$\om(k)$.

\medskip

\noindent{\it Step 1.\/} We may assume that $\sup_{k\ge 1}
\alpha^\pm_1(k)<\infty$. Indeed, if there is a subsequence $\{k_m\}_{m\ge 1}$
such that $\alpha_1^\pm(k_m)\to\infty$, then along this subsequence
$(1-\alpha_1^\pm (u^{\pm 1}-1))^{-1}$ tends to zero uniformly on any compact
subset of $\T\setminus\{u=1\}$, which implies that the right-hand side of
\eqref{eqF.10} tends to zero.

Let us fix $A>0$ such that $\sup_{k}\alpha_1^{\pm}(k)\le A$.

\medskip

\noindent{\it Step 2.\/} Assume $\omega$ ranges over the subset of elements of
$\Omega$ with $\alpha_1^\pm\le A$ and $\beta_1^\pm\le \frac 12$. Then for any
$\epsilon>0$,
$$
\lim_{\delta^++\delta^-\to\infty}\Phi(u;\om)=0\ \textrm{ uniformly on }\
\{u\in\T,\, \Re u\le 1-\epsilon\}.
$$

Indeed, assume $0\le\be\le\frac12$ and $0\le\al\le A$. For $u$ on the unit
circle with $\Re u\le 1-\epsilon$ we have elementary estimates
\begin{multline*}
|1+\beta(u-1)|^2= (1-\beta)^2+\beta^2+2\beta(1-\beta)\Re
u\\=1-2\beta(1-\beta)(1-\Re u)\le 1-2\beta(1-\beta)\epsilon\le
1-\beta\epsilon\le e^{-\beta\epsilon},
\end{multline*}

\begin{multline*}
|1-\alpha(u-1)|^{-2}=(1+2\alpha(1+\alpha)(1-\Re u))^{-1}\\\le
(1+2\alpha(1+\alpha)\epsilon)^{-1}\le (1+2\alpha\epsilon)^{-1}\le
e^{-\const\alpha\epsilon}
\end{multline*}
with a suitable constant $\const>0$ that depends only on $A$,

$$
|e^{\gamma^+(u-1)+\gamma^-(u^{-1}-1)}|^2=e^{-2(\gamma^++\gamma^-)(1-\Re u)}\le
e^{-2(\gamma^++\gamma^-)\epsilon}.
$$
Thus, if
$$
\delta^++\delta^-=\gamma^++\gamma^-+
\sum_{i=1}^\infty(\alpha^+_i+\beta^+_i+\alpha^-_i+\beta^-_i)\to\infty
$$
then at least one of the right-hand sides in these estimates yields an
infinitesimally small contribution, and consequently $\Phi(u;\om)$ must be
small.

Thus, under the above assumptions on $\omega$, we see that $\omega\to\infty$
implies $\varphi_n(\omega)\to0$ {\it uniformly\/} on $n\in\Z$.

\medskip

\noindent {\it Step 3.\/} Now we get rid of the restriction
$\beta_1^{\pm}\le\frac12$. Set
$$
B^\pm(k)=\#\{i\ge 1\mid \beta_i^\pm(k)>\tfrac 12\}.
$$
Since for any $k\ge 1$ we have $\beta_1^+(k)+\beta_1^-(k)\le 1$, at least one
of the numbers $B^\pm(k)$ is equal to 0. For inapplicability of the Step 2 argument,
for any subsequence $\{\omega_{k_m}\}$ of our sequence $\{\omega(k)\}$, we must
have $B^+(k_m)+B^-(k_m)\to\infty$. Hence, possibly passing to a subsequence and
switching $+$ and $-$, we may assume that $B^+(k)\to\infty$ as $k\to\infty$.

Set
$$
\tilde\om(k):=S^{-B^+(k)}\om(k),
$$
where $S$ is the homeomorphism from Proposition \ref{propF.5}. In words,
$\tilde\omega(k)$ is obtained from $\omega(k)$ as follows: Each
$\beta^+$-coordinate of $\omega(k)$ that is $>1/2$ is transformed into a
$\beta^-$ coordinate of $\tilde \omega(k)$ equal to $1$ minus the original
$\beta^+$-coordinate; all other coordinates are the same (equivalently, the
function $\Phi(u;\om))$ is multiplied by $u^{-B^+(k)}$). Let
$(\tilde\alpha^\pm(k),\tilde\beta^\pm(k),\tilde\gamma^\pm(k),\tilde\delta^\pm(k))$
be the coordinates of $\tilde \omega(k)$.

\medskip

\noindent {\it Step 4.\/} Since no $\beta$-coordinates of $\tilde\omega(k)$ are
greater than $1/2$, the result of Step 2 implies that if
$\sup(\tilde\delta^+(k)+\tilde\delta^-(k))=\infty$ then
$\varphi_n(\omega(k))=\varphi_{n-B^+(k)}(\tilde\omega(k))\to 0$ as $k\to\infty$
along an appropriate subsequence (because the conclusion of that step holds
uniformly on $n\in\Z$). Hence, it remains to examine the case when
$\tilde\delta^+(k)+\tilde\delta^-(k)$ is bounded.

Let us deform the integration contour in \eqref{eqF.10} to $|u|=R$ with
$A/(1+A)<R<1$. Using the estimates (for $|u|=R$, $0\le \alpha\le A$, $0\le
\beta\le \frac12$)
$$
\gathered
|1+\beta(u^{\pm 1}-1)|\le 1+\beta|u^{\pm 1}-1|\le e^{\const_1\beta},\\
|1-\alpha(u^{\pm 1}-1)|^{-1}\le |1-\alpha(R^{\pm 1}-1)|^{-1}\le e^{\const_2\alpha},\\
|e^{\gamma(u^{\pm 1}-1)}|\le e^{\const_3\gamma}
\endgathered
$$
with suitable $\const_j>0$, $j=1,2,3$, we see that
$$
|\Phi(u;\tilde\omega(k))|\le
e^{\const_4({\tilde\delta^+(k)+\tilde\delta^-(k)})}
$$
for a $\const_4>0$, which remains bounded.

On the other hand, as $k\to\infty$, the factor $u^{-n-1+B^+(k)}$ in the
integral representation \eqref{eqF.10} for
$\varphi_{n-B^+(k)}(\tilde\omega(k))$ tends to 0 uniformly in $u$, because
$B^+(k)\to+\infty$ and $|u|=R<1$. Hence,
$\varphi_{n}(\omega(k))=\varphi_{n-B^+(k)}(\tilde\omega(k))\to 0$ as
$k\to\infty$, and the proof of the proposition is complete.
\end{proof}

The following proposition is an analog of Corollary \ref{corF.3} for the
stochastic matrices $\La^N_K$. It is much easier to prove.

\begin{proposition}\label{propF.9}
Let $K<N$. If $\ka\in\GT_K$ is fixed and $\nu$ goes to infinity in the
countable discrete space $\GT_N$, then $\La^N_K(\nu,\ka)\to0$. Equivalently,
the map $f\mapsto \La^N_K f$ is a continuous {\rm(}actually contractive{\rm)}
operator $C_0(\GT_K)\to C_0(\GT_N)$, so that $\La^N_K$ is Feller.
\end{proposition}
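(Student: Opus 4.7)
The plan is to reduce the assertion to the one-step case $N=K+1$ and then invoke composition. By \eqref{eqF.15},
\[
\La^N_K=\La^N_{N-1}\La^{N-1}_{N-2}\cdots\La^{K+1}_K,
\]
so once I prove that each single-step kernel $\La^{j+1}_j$ sends $C_0(\GT_j)$ into $C_0(\GT_{j+1})$, the composition is automatically a contractive linear operator $C_0(\GT_K)\to C_0(\GT_N)$ whose image lies in $C_0(\GT_N)$, and the proposition follows.

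For the single-step claim, fix $\la\in\GT_j$ and consider only those $\nu\in\GT_{j+1}$ with $\la\prec\nu$, since otherwise $\La^{j+1}_j(\nu,\la)=0$. The interlacing inequalities $\nu_{i+1}\le\la_i\le\nu_i$ yield
\[
\la_i\le\nu_i\le\la_{i-1}\quad(i=2,\dots,j),\qquad \nu_1\ge\la_1,\qquad \nu_{j+1}\le\la_j.
\]
Hence the middle coordinates $\nu_2,\dots,\nu_j$ are trapped in the bounded interval $[\la_j,\la_1]$, while only $\nu_1$ may tend to $+\infty$ and only $\nu_{j+1}$ may tend to $-\infty$. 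Consequently, a sequence $\nu\to\infty$ in the countable discrete space $\GT_{j+1}$ satisfying $\la\prec\nu$ must have $\nu_1-\nu_{j+1}\to\infty$. The Weyl dimension formula
\[
\Dim_{j+1}\nu=\prod_{1\le a<b\le j+1}\frac{\nu_a-\nu_b+b-a}{b-a}
\]
has every factor at least $1$, and the $(a,b)=(1,j+1)$ factor equals $(\nu_1-\nu_{j+1}+j)/j$, which diverges. Therefore $\Dim_{j+1}\nu\to\infty$, and since $\Dim_j\la$ is fixed,
\[
\La^{j+1}_j(\nu,\la)=\frac{\Dim_j\la}{\Dim_{j+1}\nu}\longrightarrow 0,
\]
establishing the Feller property for the single step.

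There is no substantive obstacle. The heart of the argument is the interlacing observation: above a fixed $\la$, all intermediate coordinates of $\nu$ are pinned in a compact box, so the only way for $\nu$ to escape to infinity is via unbounded growth of the spread $\nu_1-\nu_{j+1}$, after which Weyl's formula drives the denominator to infinity. The composition step is purely formal, as each $\La^{j+1}_j$ is a norm-one linear operator between the relevant $C_0$-spaces whose range lies in $C_0$.
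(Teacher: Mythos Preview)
Your proof is correct and follows essentially the same route as the paper: reduce to the one-step case via the factorization \eqref{eqF.15}, use interlacing to pin the intermediate coordinates of $\nu$, and then apply Weyl's dimension formula to see that the denominator blows up. The only cosmetic difference is that you isolate the single factor $(\nu_1-\nu_{j+1}+j)/j$ as the one tending to infinity, whereas the paper simply observes that at least one factor in the Vandermonde product diverges.
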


\begin{proof}
Because of \eqref{eqF.15} it suffices to prove the assertion of the proposition
in the particular case when $K=N-1$. The classic Weyl's dimension formula says
that
\begin{equation}\label{eqG.2}
\Dim_N\nu=\prod\limits_{1\le i<j\le N}\frac{\nu_i-\nu_j+j-i}{j-i}.
\end{equation}
Therefore, for $\ka\prec\nu$
\begin{equation}\label{eqF.17}
\La^N_{N-1}(\nu,\ka)=\frac{(N-1)!\prod\limits_{1\le i<j\le
N-1}(\ka_i-\ka_j+j-i)}{\prod\limits_{1\le i<j\le N}(\nu_i-\nu_j+j-i)}\,,
\end{equation}
otherwise $ \La^N_{N-1}(\nu,\ka)=0$.

Fix $\ka$ and assume $\nu$ is such that $\ka\prec\nu$. Then  $\nu\to\infty$ is
equivalent to either $\nu_1\to+\infty$, or $\nu_N\to -\infty$, or both; all
other coordinates of $\nu$ must remain bounded because of the interlacing
condition $\ka\prec\nu$. But then it is immediate that at least one of the
factors in the denominator of \eqref{eqF.17} tends to infinity. Thus, the ratio
goes to $0$ as needed.
\end{proof}

\subsection{Totality of $\{\varphi_\nu\}$}
Given $\nu\in\GT_N$, write the expansion of $S_\nu(u_1,\dots,u_N)$ in
monomials,
$$
S_\nu(u_1,\dots,u_N)=\sum c(\nu; n_1,\dots,n_N)u_1^{n_1}\dots,u_N^{n_N},
$$
where the sum is over $N$-tuples of integers $(n_1,\dots,n_N)$ with
$$
n_1+\dots+n_N=\nu_1+\dots+\nu_N.
$$
Obviously, the sum is actually finite. Further, the coefficients are
nonnegative integers: they are nothing else than the weight multiplicities of
the irreducible representation of $U(N)$ indexed by $\nu$. In purely
combinatorial way, this can be also deduced from the branching rule for the
characters: it follows that $c(\nu; n_1,\dots,n_N)$ equals the number of
triangular Gelfand--Tsetlin schemes $\{\la^{(j)}_i: 1\le i\le j\le N\}$ with
the top row $\la^{(N)}=\nu$ and such that
$$
\left(\la^{(j)}_1+\dots+\la^{(j)}_j\right)
-\left(\la^{(j-1)}_1+\dots+\la^{(j-1)}_{j-1}\right)=\nu_j, \qquad j=2,\dots,N.
$$

By virtue of Proposition
\ref{propF.4}, the functions $\varphi_n(\om)$ lie in $C_0(\Om)$. Therefore, the
same holds for the functions $\varphi_\nu(\om)$.

The results of the next proposition and its corollary are similar to
\cite[Lemme 3]{Vo76}, and the main idea of the proof is the same.

\begin{proposition}\label{propF.8}
For any $N=1,2,\dots$ and any $N$-tuple $(n_1,\dots,n_N)\in\Z^N$,
\begin{equation}\label{eqF.13}
\varphi_{n_1}(\om)\dots\varphi_{n_N}(\om)=\sum_\nu c(\nu;
n_1,\dots,n_N)\varphi_\nu(\om),
\end{equation}
where the series in the right-hand side converges in the norm topology of the
Banach space $C_0(\Om)$.
\end{proposition}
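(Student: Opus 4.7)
The plan is to first establish the identity \eqref{eqF.13} pointwise on $\Om$ and then upgrade the pointwise convergence of the series on the right-hand side to convergence in the sup norm of $C_0(\Om)$.

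For the pointwise identity I would extract the coefficient of $u_1^{n_1}\cdots u_N^{n_N}$ from both sides of the Schur expansion \eqref{eqF.5}. Substituting the individual Laurent expansions $\Phi(u_j;\om)=\sum_n\varphi_n(\om)u_j^n$ on the left gives $\varphi_{n_1}(\om)\cdots\varphi_{n_N}(\om)$, while expanding each Schur Laurent polynomial on the right produces $\sum_\nu c(\nu;n_1,\dots,n_N)\varphi_\nu(\om)$. The interchange of the $\nu$-sum with coefficient extraction is legitimate because all the quantities involved are nonnegative (Proposition \ref{propF.7} for $\varphi_\nu$ and the weight-multiplicity interpretation for $c(\nu;\cdot)$); specializing \eqref{eqF.5} at $u_1=\dots=u_N=1$ and using \eqref{eqF.6} yields $\sum_\nu \Dim_N\nu\cdot\varphi_\nu(\om)=\Phi(1;\om)^N=1$, so the doubly-indexed nonnegative sum $\sum_\nu\sum_{(n_1,\dots,n_N)}c(\nu;n_1,\dots,n_N)\varphi_\nu(\om)$ is absolutely summable and Tonelli applies.

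For norm convergence, let $A_M\uparrow\GT_N$ be any exhaustion by finite sets and set $F_M(\om):=\sum_{\nu\in A_M}c(\nu;n_1,\dots,n_N)\varphi_\nu(\om)$. Each $F_M$ is a finite linear combination of the continuous $C_0$-functions $\varphi_\nu$ (continuity and $C_0$-ness of $\varphi_\nu$ follow from Proposition \ref{propF.4} together with the determinant formula defining $\varphi_\nu$ and the uniform bound $0\le\varphi_n\le 1$ coming from \eqref{eqF.10} and Proposition \ref{propF.6}). By nonnegativity, $F_M$ increases pointwise to $F(\om):=\varphi_{n_1}(\om)\cdots\varphi_{n_N}(\om)$, and the same uniform bound shows that $F$ itself lies in $C_0(\Om)$, since a product of $C_0$-functions with uniformly bounded factors is again in $C_0$. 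Given $\epsilon>0$, pick a compact set $K\subset\Om$ with $F<\epsilon$ off $K$; Dini's theorem applied on $K$ to the monotone sequence $F_M\uparrow F$ of continuous functions yields $F-F_M<\epsilon$ uniformly on $K$ for large $M$, and off $K$ we have $0\le F-F_M\le F<\epsilon$ by monotonicity. Combining the two estimates gives $\|F-F_M\|_\infty<\epsilon$, which is the required convergence in $C_0(\Om)$.

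The main obstacle is the last step rather than the pointwise identity: the latter is essentially formal once positivity is harnessed, but converting monotone pointwise convergence of a series of $C_0$-functions into uniform convergence requires genuine use of the decay-at-infinity of the limit. The standard two-zone argument -- Dini on a compact set, $C_0$-domination on its complement -- is the key ingredient, and it is precisely the nontrivial analytic content that Proposition \ref{propF.4} makes available.
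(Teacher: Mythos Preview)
Your proof is correct and essentially the same as the paper's: both establish the pointwise identity by comparing the monomial and Schur expansions of $\Phi(u_1;\om)\cdots\Phi(u_N;\om)$, then upgrade monotone pointwise convergence of nonnegative $C_0$-functions to norm convergence. The paper states the last step more tersely (uniform convergence on compacts from continuity and nonnegativity, then norm convergence from vanishing at infinity), but this is exactly the Dini-plus-tail two-zone argument you spell out in detail.
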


\begin{proof}
First, let us show that \eqref{eqF.13} holds pointwise. Indeed, this follows
from the comparison of the following two expansions:
\begin{align*}
\Phi(u_1;\om)\dots\Phi(u_n;\om)&=\sum_{(n_1,\dots,n_N)\in\Z^N}
\varphi_{n_1}(\om)\dots\varphi_{n_N}(\om) u_1^{n_1}\dots u_N^{n_N}
\\
&=\sum_{\nu\in\GT_N}\varphi_\nu(\om)S_\nu(u_1,\dots,u_N).
\end{align*}

Next, as all the functions in \eqref{eqF.13} are continuous and the summands in
the right-hand side are nonnegative, the series converges uniformly on compact
subsets of $\Om$.

Finally, as all the functions vanish at infinity, monotone convergence also
implies convergence in norm.
\end{proof}

\begin{corollary}\label{corF.2}
The family $\{\varphi_\nu: \nu\in\GT\}$ is total in the Banach space
$C_0(\Om)$, that is, the linear span of these functions is dense.
\end{corollary}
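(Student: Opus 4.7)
The plan is to apply the Stone--Weierstrass theorem for $C_0(\Om)$, with the subalgebra $A\subset C_0(\Om)$ generated (without unit) by the Laurent coefficient functions $\{\varphi_n:n\in\Z\}$. By Proposition \ref{propF.4}, each $\varphi_n$ lies in $C_0(\Om)$, so $A\subset C_0(\Om)$. Moreover, Proposition \ref{propF.8} asserts precisely that every generating monomial $\varphi_{n_1}\dotsm\varphi_{n_N}$ is a norm-convergent sum of functions $\varphi_\nu$; taking finite linear combinations, the entire algebra $A$ lies inside the closed linear span $L$ of $\{\varphi_\nu:\nu\in\GT\}$ in $C_0(\Om)$. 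Hence it suffices to show $\overline{A}=C_0(\Om)$, as this forces $L=C_0(\Om)$ too.

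First I would check the two hypotheses of Stone--Weierstrass for the real algebra $C_0(X)$ on a locally compact Hausdorff space $X$ (the space $\Om$ qualifies, by the discussion in Subsection 2.4). For separation of points, suppose $\om\neq\om'$. Proposition \ref{propF.3} gives $\Phi(\,\cdot\,;\om)\neq\Phi(\,\cdot\,;\om')$. Since both functions are holomorphic on some common annular neighborhood of $\T$ and admit there the Laurent expansions with coefficients $\{\varphi_n(\om)\}$ and $\{\varphi_n(\om')\}$ respectively, they cannot agree in every Laurent coefficient. Hence $\varphi_n(\om)\neq\varphi_n(\om')$ for some $n\in\Z$, and this single generator of $A$ already separates $\om$ from $\om'$.

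Next I would verify the non-vanishing condition at each $\om\in\Om$ using the identity $\Phi(1;\om)=1$ recorded in Section 2. The Laurent series $\sum_{n\in\Z}\varphi_n(\om)u^n$ converges in a neighborhood of $\T$ and its terms are nonnegative by Proposition \ref{propF.7}, so it converges absolutely at $u=1$, giving
\begin{equation*}
\sum_{n\in\Z}\varphi_n(\om)\;=\;\Phi(1;\om)\;=\;1.
\end{equation*}
Therefore at least one $\varphi_n(\om)$ is strictly positive, so some element of $A$ is nonzero at $\om$.

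With both hypotheses in place, Stone--Weierstrass yields $\overline{A}=C_0(\Om)$, and since $A\subseteq L$ we conclude $L=C_0(\Om)$, which is the statement of the corollary. The argument is mostly a bookkeeping exercise built on Propositions \ref{propF.3}, \ref{propF.4}, \ref{propF.7}, and \ref{propF.8}; the only potential pitfall is remembering that $C_0$ has no unit, so one genuinely needs the non-vanishing condition in addition to separation of points, and both are supplied almost tautologically by the identity $\Phi(1;\om)=1$ and by the injectivity of $\om\mapsto\Phi(\,\cdot\,;\om)$.
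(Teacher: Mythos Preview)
Your proof is correct and follows essentially the same Stone--Weierstrass strategy as the paper, relying on the same ingredients (Propositions \ref{propF.3}, \ref{propF.4}, \ref{propF.7}, \ref{propF.8}). The only cosmetic difference is that the paper passes to the one-point compactification $\Om\cup\{\infty\}$ and adjoins the constant $1$, invoking the unital Stone--Weierstrass theorem, whereas you apply the $C_0$ version directly; your non-vanishing argument via $\sum_n\varphi_n(\om)=\Phi(1;\om)=1$ is in fact slightly cleaner than the paper's phrasing.
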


\begin{proof}
Let $\Om\cup\{\infty\}$ denote the one-point compactification of $\Om$. It
suffices to show that the family $\{\varphi_\nu: \nu\in\GT\}$ together with the
constant $1$ is total in the real Banach algebra $C(\Om\cup\{\infty\})$. By
Proposition \ref{propF.8}, the linear span of the family contains the
subalgebra generated by 1 and the functions $\varphi_n(\om)$. By virtue of
Proposition \ref{propF.3}, this subalgebra separates points of $\Om$. Next, for
any fixed $\om\in\Om$, the function $u\to\Phi(u;\om)$ cannot be identically
equal to 1, which implies that all the functions $\varphi_n$ cannot vanish at
$\om$ simultaneously. On the other hand, recall that they vanish at $\infty$.
This means that our subalgebra separates points of $\Om\cup\{\infty\}$, too.
Therefore, we may apply the Stone-Weierstrass theorem.
\end{proof}

\subsection{Description of the boundary}

\begin{theorem}\label{thmF.1}
For an arbitrary coherent system $\{M_K: K=1,2,\dots\}$ of distributions on the
graph\/ $\GT$ there exists a probability Borel measure $M$ on $\Om$ such that
\begin{equation}\label{eqF.14}
M_K=M\La^\infty_K, \qquad K=1,2,\dots,
\end{equation}
that is,
$$
M_K(\ka)=\int_\Om  M(d\om)\La^\infty_K(\om,\ka), \qquad \ka\in\GT_K, \quad
K=1,2,\dots\,,
$$
where $\La^\infty_K:\Om\dasharrow\GT_K$ is the Markov kernel defined in
subsection \ref{sectF.1}.

Such a measure is unique, and any probability Borel measure $M$ on\/ $\Om$
gives rise in this way to a coherent system.

\end{theorem}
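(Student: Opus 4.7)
The plan is to split the statement into three assertions and handle them in decreasing order of ease. First, any probability Borel measure $M$ on $\Om$ yields a coherent system via $M_K := M\La^\infty_K$: by Proposition \ref{propF.2}(i) each $M_K$ is a probability measure on $\GT_K$, and the kernel identity $\La^\infty_N\La^N_{N-1}=\La^\infty_{N-1}$ of Proposition \ref{propF.2}(ii) yields coherence $M_N\La^N_{N-1}=M_{N-1}$ by associativity of the kernel product. Second, for uniqueness, one rewrites \eqref{eqF.14} using \eqref{eqF.4} as
\[
\int_\Om \varphi_\ka(\om)\,M(d\om) = \frac{M_K(\ka)}{\Dim_K\ka},\qquad \ka\in\GT_K,\;K\ge 1.
\]
The right-hand side is determined by the coherent system. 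By Proposition \ref{propF.4} the $\varphi_\ka$ lie in $C_0(\Om)$, and by Corollary \ref{corF.2} their linear span is dense in $C_0(\Om)$. Hence $M$ is determined as a positive linear functional on $C_0(\Om)$, i.e., as a Radon measure by Riesz representation.

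The substantive part is existence: every coherent system $\{M_K\}$ arises this way. The plan is to produce $M$ as a subsequential weak limit of pushforwards $M^{(N)} := \iota_{N*}M_N$ on $\Om$, where $\iota_N:\GT_N\to\Om$ is the natural embedding sending a signature $\nu$ to the point of $\Om$ whose $(\al^\pm,\be^\pm,\ga^\pm)$ parameters encode the suitably normalized coordinates of $\nu$. One works in the one-point compactification $\Om\cup\{\infty\}$ (where weak compactness of probability measures is automatic), extracts a limit, restricts back to $\Om$, and identifies the result as the desired $M$. The identification rests on matching, for each fixed $\ka\in\GT_K$, the discrete coherence relation
\[
\frac{M_K(\ka)}{\Dim_K\ka}=\sum_{\nu\in\GT_N}M_N(\nu)\,\frac{\Dim_{K,N}(\ka,\nu)}{\Dim_N\nu}
\]
with the continuous side $\int_\Om \varphi_\ka(\om)\,M^{(N)}(d\om)=\sum_\nu M_N(\nu)\varphi_\ka(\iota_N\nu)$; the difference is controlled by the uniform-in-$\nu$ error
\[
\sup_{\nu\in\GT_N}\Bigl|\frac{\Dim_{K,N}(\ka,\nu)}{\Dim_N\nu}-\varphi_\ka(\iota_N\nu)\Bigr|,
\]
which must vanish as $N\to\infty$. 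Once $M\La^\infty_K=M_K$ is established, summing over $\ka\in\GT_K$ forces $M(\Om)=1$, so $M$ is automatically a probability measure; escape to the point at infinity is ruled out by the fact that the $\varphi_\ka$ belong to $C_0(\Om)$ (Proposition \ref{propF.4}).

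The main obstacle is exactly the vanishing of the uniform error above. This is precisely the content of the Uniform Approximation Theorem (Theorem \ref{thmG.1}) announced in Section~3, and establishing it is where the novel combinatorics of the paper enter: via the Jacobi--Trudi type formula of Theorem \ref{thmD.1} for the relative dimension $\Dim_{K,N}(\ka,\nu)/\Dim_N\nu$, itself derived from the new Cauchy-type identity \eqref{eqH.3} involving the Schur-like functions $\Sf_{\ka\mid N}$ and, through Section~4, Molev's dual Schur functions. Granted the Uniform Approximation Theorem, the argument above reduces the proof of Theorem \ref{thmF.1} to routine soft analysis.
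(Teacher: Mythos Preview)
Your proposal is correct and follows essentially the same route as the paper's own derivation in Section~\ref{G}: the easy direction comes from \eqref{eqF.9}, uniqueness from the totality statement of Corollary~\ref{corF.2}, and existence by pushing $M_N$ forward to $\Om$ via $\nu\mapsto\om(\nu)$, extracting a (sub)sequential vague limit, and invoking the Uniform Approximation Theorem~\ref{thmG.1} to identify the limit. Your use of the one-point compactification is just a repackaging of the paper's vague-topology argument, and your observation that summing over $\ka$ forces $M(\Om)=1$ matches the paper's concluding sentence verbatim in spirit.
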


In Section \ref{G} we reduce Theorem \ref{thmF.1} to Theorem \ref{thmG.1} whose
proof in turn is given in the subsequent sections.

Let us say that $M$ is the {\it boundary measure\/} of a given coherent system
$\{M_K\}$

By virtue of the theorem, the boundary measures of the extreme coherent systems
are exactly the delta measures on\/ $\Om$. Therefore, the theorem implies

\begin{corollary}\label{corF.1}
There exists a bijection $\pd(\GT)\leftrightarrow\Om$, under which the extreme
coherent system $\{M^{(\om)}_K: K=1,2,\dots\}$ corresponding to a point
$\om\in\Om$ is given by formula
$$
M_K^{(\om)}(\ka)=\La^\infty_K(\om,\ka), \qquad \ka\in\GT_K, \quad
K=1,2,\dots\,.
$$
\end{corollary}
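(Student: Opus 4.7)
The plan is to obtain Corollary \ref{corF.1} directly from Theorem \ref{thmF.1} by exploiting the fact that the map $M\mapsto\{M\La^\infty_K\}_{K\ge1}$ is an affine isomorphism of convex sets, hence matches extreme points to extreme points. Throughout I shall use only Theorem \ref{thmF.1}, the definition of $\pd(\GT)$ as the set of extreme coherent systems, and standard properties of probability measures on the locally compact metrizable separable space $\Om$.

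First I would record that the correspondence $M\leftrightarrow\{M_K\}$ supplied by Theorem \ref{thmF.1} is not merely a bijection of sets but an isomorphism of convex structures. Indeed, for any two Borel probability measures $M,M'$ on $\Om$ and any $p\in[0,1]$, each component of the image of $pM+(1-p)M'$ is $(pM+(1-p)M')\La^\infty_K = pM\La^\infty_K + (1-p)M'\La^\infty_K$, so the map sends convex combinations to convex combinations and its inverse (which exists by the uniqueness part of Theorem \ref{thmF.1}) does the same. An affine bijection between two convex sets necessarily restricts to a bijection between their sets of extreme points.

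Next I would identify the extreme points on each side. On the coherent-systems side, the extreme points form $\pd(\GT)$ by definition. On the measure side, the extreme points of the convex set of Borel probability measures on $\Om$ are exactly the Dirac measures $\de_\om$, $\om\in\Om$: a Dirac measure is clearly extreme, and conversely if $M$ is not Dirac then there is a Borel set $B\subset\Om$ with $0<M(B)<1$, allowing the decomposition $M = M(B)\cdot M(\,\cdot\mid B) + (1-M(B))\cdot M(\,\cdot\mid\Om\setminus B)$ into two distinct probability measures, so $M$ is not extreme. (The space $\Om$ is standard Borel, which is all that is needed for these conditional measures to be well defined.) Combining this with the previous paragraph, the affine bijection restricts to a bijection $\Om\ni\om\leftrightarrow\de_\om\leftrightarrow\{M_K^{(\om)}\}\in\pd(\GT)$.

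Finally I would verify the explicit formula for $M_K^{(\om)}$. Substituting $M=\de_\om$ into \eqref{eqF.14} and unfolding the integral against the Markov kernel $\La^\infty_K(\,\cdot\,,\ka)$ yields
\[
M_K^{(\om)}(\ka) \;=\; \int_\Om \de_\om(d\om')\,\La^\infty_K(\om',\ka) \;=\; \La^\infty_K(\om,\ka),
\]
which is the desired formula. There is no real obstacle here; the only point requiring a word of care is the characterization of extreme probability measures on $\Om$, for which one relies on the fact, already exploited in Section \ref{F}, that $\Om$ is locally compact, metrizable and separable, hence a standard Borel space on which this characterization holds.
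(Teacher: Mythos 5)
Your argument is correct and is essentially the paper's own derivation: the authors obtain Corollary \ref{corF.1} from Theorem \ref{thmF.1} in one line by noting that the affine bijection $M\leftrightarrow\{M_K\}$ matches extreme points with extreme points and that the extreme probability measures on $\Om$ are exactly the Dirac measures. You have simply spelled out the standard details (affinity of $M\mapsto M\La^\infty_K$, characterization of extreme measures, substitution of $\de_\om$ into \eqref{eqF.14}) that the paper leaves implicit.
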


Conversely, the theorem can be derived from the result of the corollary: the
necessary arguments can be found in \cite[Th\'eor\`eme 2]{Vo76} and
\cite[Theorems 9.1 and 9.2]{Ols03}.

\section{The Uniform Approximation Theorem}\label{G}

Recall the definition of the {\it modified Frobenius coordinates\/} of a Young
diagram $\la$ (see \cite{VK81}): First, introduce the conventional Frobenius
coordinates of $\la$:
$$
p_i=\la_i-i, \quad q_i=(\la')_i-i, \qquad i=1,\dots, d(\nu),
$$
where $\la'$ stands for the transposed diagram and $d(\la)$ denotes the number
of diagonal boxes of a Young diagram $\la$. The modified Frobenius coordinates
differ from the conventional ones by addition of one-halves:
$$
a_i=p_i+\tfrac12, \quad b_i=q_i+\tfrac12.
$$
Next, it is convenient to set
$$
a_i=b_i=0, \qquad i>d(\la),
$$
which makes it possible to assume that index $i$ ranges over $\{1,2,\dots\}$.
Note that $\sum_{i=1}^\infty (a_i+b_i)=|\la|$, where $|\la|$ denotes the total
number of boxes in $\la$.

Using the modified Frobenius coordinates we define for every $N=1,2,\dots$ an
embedding $\GT_N\hookrightarrow\Om$ in the following way. Let $\nu\in\GT_N$ be
given. We represent $\nu$ as a pair $(\nu^+,\nu^-)$ of partitions or,
equivalently, Young diagrams: $\nu^+$ consists of positive $\nu_i$'s, $\nu^-$
consists of minus negative $\nu_i$'s, and zeros can go in either of the two:
$$
\nu=(\nu_1^+,\nu_2^+,\dots,-\nu_2^-,\nu_1^-).
$$
Write $a^\pm_i, b^\pm_i$ for the modified Frobenius coordinates of $\nu^\pm$.
Then we assign to $\nu$ the point $\om(\nu)\in\Om$ with coordinates
$$
\al^\pm_i=\frac{a^\pm_i}N, \quad \be^\pm_i=\frac{b^\pm_i}N \quad (i=1,2,\dots),
\quad \de^\pm=\frac{|\nu^\pm|}N.
$$

Clearly, the correspondence $\GT_N\ni\nu\mapsto \om(\nu)$ is indeed an
embedding. The image of $\GT_N$ under this embedding is a locally finite set in
$\Om$: its intersection with any relatively compact subset is finite.

Note also that for points $\om=\om(\nu)$, $\de^\pm$ exactly equals
$\sum(\al^\pm_i+\be^\pm_i)$.

\begin{theorem}[Uniform Approximation Theorem]\label{thmG.1}
For any fixed $K=1,2,\dots$ and $\ka\in\GT_K$
\begin{equation}\label{eqG.5}
\lim_{N\to\infty}\sup_{\nu\in\GT_N}\left|\La^N_K(\nu,\ka)-\La^\infty(\om(\nu),\ka)\right|=0.
\end{equation}
\end{theorem}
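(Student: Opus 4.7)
The plan is to deduce the Uniform Approximation Theorem from the forthcoming Theorem \ref{thmD.1}, which expresses $\Dim_{K,N}(\ka,\nu)/\Dim_N\nu$ as a determinant of residues of the rational function $H^*(t;\nu)=\prod_{i=1}^N(t+i)/(t+i-\nu_i)$. The target object admits, by \eqref{eqF.4} and \eqref{eqF.10}, a parallel representation
\begin{equation*}
\La^\infty_K(\om,\ka)=\Dim_K\ka\cdot\det\Bigl[\tfrac{1}{2\pi i}\oint_\T\Phi(u;\om)\,u^{-(\ka_i-i+j)-1}\,du\Bigr]_{i,j=1}^K.
\end{equation*}
The strategy is to cast both sides into matching determinantal form and then pass to the limit factor by factor.

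Using Theorem \ref{thmD.1}, I would rewrite
\begin{equation*}
\La^N_K(\nu,\ka)=\Dim_K\ka\cdot\det\Bigl[\tfrac{1}{2\pi i}\oint_C H^*_N(u;\nu)\,u^{-(\ka_i-i+j)-1}\,du\Bigr]_{i,j=1}^K,
\end{equation*}
where a change of variable relating the additive spectral parameter $t$ in $H^*$ to a multiplicative variable $u$ produces a rescaled function $H^*_N$ together with a $\nu$-independent contour $C$, chosen as a small deformation of $\T$ that avoids the poles of $H^*$ uniformly in $\nu\in\GT_N$. I would then verify, factor by factor, that $H^*_N(u;\nu)\to\Phi(u;\om(\nu))$ uniformly on $C$ as $N\to\infty$: the contributions matching $\al^\pm_i=a^\pm_i/N$ and $\be^\pm_i=b^\pm_i/N$ converge directly, while the bulk of the product yields the exponentials $e^{\ga^\pm(u^{\pm1}-1)}$ in \eqref{eqF.11} via the $(1+x/N)^N\to e^x$ mechanism.

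The technical core is a uniform bound $|H^*_N(u;\nu)|\le 1$ on (a neighborhood of) $\T$, valid for every $N\ge K+1$ and every $\nu\in\GT_N$. This is the discrete analog of Proposition \ref{propF.6} and, like \eqref{eqF.12}, should be established factor by factor. Combined with the pointwise convergence just described and dominated convergence, it promotes the convergence of the contour integrals, and hence of their $K\times K$ determinants, to uniform convergence on sets of the form $\{\nu\in\GT_N:\om(\nu)\in L\}$ for each compact $L\subset\Om$.

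Finally, to cover the complement, take $L$ of the form $\{\de^++\de^-\le R\}$. Proposition \ref{propF.4} makes $\La^\infty_K(\om(\nu),\ka)$ uniformly small once $R$ is large, while the uniform bound of the previous paragraph (together with Proposition \ref{propF.9}) likewise forces $\La^N_K(\nu,\ka)$ to be uniformly small there; combining the two regimes gives \eqref{eqG.5}. The chief obstacle is the uniform estimate $|H^*_N|\le 1$: one must select the change of variable and contour so that the poles of $H^*$ are avoided uniformly in $\nu\in\GT_N$ while a factor-by-factor bound of the type \eqref{eqF.12} survives. Everything else is complex-analytic bookkeeping on top of Theorem \ref{thmD.1}.
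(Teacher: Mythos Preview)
Your architecture---start from Theorem \ref{thmD.1}, pass to a multiplicative variable, and finish with the bound $|\Phi|\le1$ on $\T$---is the paper's route. But the contour-integral representation you write down puts the $N$-dependence on the wrong factor, and this is what makes your uniformity argument fragile. Under the change of variable $t=-\tfrac12+N/(u-1)$ one has $H^*(t;\nu)=\Phi(u;\om(\nu))$ as an \emph{identity} (Proposition \ref{propB.2}), not an asymptotic relation; there is no ``$H^*_N\to\Phi$'' to prove. The Pochhammer weights in Theorem \ref{thmD.1} (which depend on the column index $j$ through the shifted intervals $\LL(N,j)$) do not disappear under the change of variable: they become explicit rational kernels $R_k^{(j)}(u;N)$, giving
\[
A_N(i,j)=\frac1{2\pi i}\oint_\T\Phi(u;\om(\nu))\,R_{\ka_i-i+j}^{(j)}(u;N)\,\frac{du}{u},\qquad R_k^{(j)}(u;N)\to u^{-k}\ \text{uniformly on }\T
\]
(Proposition \ref{propE.1}, Theorem \ref{thmE.1}). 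The decisive structural point is that $R_k^{(j)}(u;N)$ is \emph{independent of $\nu$}. Hence the difference in \eqref{eqG.5} is bounded by $\sup_{\T^K}\bigl|R_\ka(\,\cdot\,;N)-\det[u_j^{-(\ka_i-i+j)}]\bigr|$, and uniformity in $\nu$ is automatic---no compact/escape-to-infinity dichotomy is needed.

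Your displayed formula with kernel exactly $u^{-(\ka_i-i+j)-1}$ is not what Theorem \ref{thmD.1} yields: the weights $(m-k+1)_{N-K-1}/(m)_{N-K+1}$ depend jointly on $m$, $k$, and the column $j$ in a way that cannot be absorbed into a single function $H^*_N(u;\nu)$ while leaving the kernel a pure power. And even if it could, ``pointwise convergence of $H^*_N$ plus a uniform bound plus dominated convergence'' does not by itself give uniformity over $\nu\in\GT_N$, since $\nu$ varies with $N$ and there is no fixed sequence of integrands to which dominated convergence applies. The compact/non-compact split you sketch can be made to work (it is essentially the alternative route in Section \ref{I}), but the paper's placement of the $N$-dependence in a $\nu$-free kernel sidesteps all of that.
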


\begin{proof}[Derivation of Theorem \ref{thmF.1} from Theorem \ref{thmG.1}]
We will verify the assertions of Theorem \ref{thmF.1} in the reverse order.

The fact that any probability Borel measure $M$ on $\Om$ serves as the boundary
measure of a coherent system $\{M_K: K=1,2,\dots\}$ is obvious from
\eqref{eqF.9}.

Next, if a coherent system $\{M_K\}$ has a boundary measure on $\Om$,  then its
uniqueness directly follows  from  Corollary \ref{corF.2}. Indeed, here we use
the fact that the space of finite signed measures on $\Om$ is dual to the
Banach space $C_0(\Om)$.

Now let us deduce from Theorem \ref{thmG.1} the existence of the boundary
measure for every coherent system $\{M_K\}$.

Write the compatibility relation for our coherent system,
$$
M_N\La^N_K=M_K, \qquad N>K,
$$
in the form
$$
\langle M_N, \, \La^N_K(\,\cdot\,,\ka)\rangle=M_K(\ka), \qquad N>K, \quad
\ka\in\GT_K,
$$
where $\La^N_K(\,\cdot\,,\ka)$ is viewed as the function $\nu\mapsto
\La^N_K(\nu,\ka)$ on $\GT_N$ and the angle brackets denote the canonical
pairing between measures and functions.

Denote by $\wt M_N$ the pushforward of $M_N$ under the embedding
$\GT_N\hookrightarrow\Om$ defined by $\nu\mapsto\om(\nu)$; this is a
probability measure on $\Om$ supported by the subset
$$
\wt{\GT_N}:=\{\om(\nu): \nu\in\GT_N\}\subset\Om.
$$

Next, regard $\La^{N}_K(\nu,\ka)$ as a function of variable $\om$ ranging over
$\wt{\GT_N}$ and denote this function by $\wt\La^N_K(\,\cdot\,,\ka)$.  Then we
may write the above compatibility relation as
$$
\langle \wt M_N, \, \wt\La^N_K(\,\cdot\,,\ka)\rangle=M_K(\ka), \qquad N>K.
\quad \ka\in\GT_K,
$$

By virtue of Theorem \ref{thmG.1}, for any $\om\in\wt{\GT_N}$
$$
\wt\La^N_K(\om,\ka)=\La^\infty_K(\om,\ka)+o(1), \qquad N\gg K,
$$
where the remainder term $o(1)$ depends on $\ka$ but is uniform on
$\om\in\wt{\GT_N}$. Since $\wt M_N$ is a probability measure, we get
\begin{equation}\label{eqG.7}
\langle \wt M_N, \, \La^\infty_K(\,\cdot\,,\ka)\rangle=M_K(\ka)+o(1), \qquad
N\gg K.
\end{equation}

The space $\Om$ is not only locally compact but also metrizable and separable.
Therefore, any sequence of probability measures on $\Om$ always has a nonvoid
set of partial limits in the vague topology (which is nothing else than the
weak-* topology of the Banach dual to $C_0(\Om)$). Note that, in general, it
may happen that such limits are sub-probability measures (the total mass is
strictly less than 1).

So, let $M$ be any partial vague limit of the sequence $\{\wt M_N\}$.
Passing to the limit in \eqref{eqG.7} along an appropriate subsequence of
indices $N$ we get
\begin{equation}\label{eqG.6}
\langle M, \, \La^\infty_K(\,\cdot\,,\ka)\rangle=M_K(\ka), \qquad K=1,2, \dots,
\quad \ka\in\GT_K,
\end{equation}
which is equivalent to the desired relation \eqref{eqF.14}.

Finally, once relation \eqref{eqF.14} is established, $M$ must be a probability
measure because otherwise the total mass of $M_K$ would be strictly less than
1, which is impossible.
\end{proof}

The rest of the section is a comment to Theorem \ref{thmG.1}, and the proof of
the theorem is given next in  Sections \ref{A}--\ref{E} .

\medskip

Recall that both $\La^N_K(\nu,\ka)$ and $\La^\infty_K(\om,\ka)$ involve one and
the same common factor $\Dim_K\ka$\,:
$$
\La^N_K(\nu,\ka)=\Dim_K\ka\cdot \frac{\Dim_{K,N}(\ka,\nu)}{\Dim_N\nu}, \qquad
\La^\infty_K(\om,\ka)=\Dim_K\ka\cdot\varphi_\nu(\om).
$$
As $\nu$ varies together with $N$, this factor remains intact. For this reason,
in what follows, we ignore it and study the relative dimension
\begin{equation}\label{eqG.1}
\frac{\Dim_{K,N}(\ka,\nu)}{\Dim_N\nu}.
\end{equation}
Incidentally, we get explicit formulas for this quantity (see Theorem
\ref{thmD.1} and its modification, Proposition \ref{propD.2}).

\begin{remark}\label{remG.1}
Recall that for the denominator in \eqref{eqG.1} there is a simple expression,
\eqref{eqG.2}. Let us also write down an expression for the numerator. Assume
that $\ka$ and $\nu$ are as in Theorem \ref{thmG.1}, and assume additionally
that $\ka_K\ge\nu_N$ (otherwise $\Dim_{K,N}(\ka,\nu)=0$). Define partitions
$\bar\nu$ and $\bar\ka$ as follows:
\begin{gather*}
\bar\nu=(\nu_1-\nu_N,\dots,\nu_{N-1}-\nu_N,0,0,\dots)\\
\bar\ka=(\ka_1-\nu_N,\dots,\ka_{K-1}-\nu_N,0,0,\dots).
\end{gather*}
We will also assume that $\bar\nu_i\ge\bar\ka_i$ for all $i=1,2,\dots$
(otherwise $\Dim_{K,N}(\ka,\nu)=0$). This enables us to define the skew Schur
function $S_{\bar\nu/\bar\ka}$. Then one has
\begin{equation}\label{eqG.3}
\Dim_{K,N}(\ka,\nu)=S_{\bar\nu/\bar\ka}(\,\underbrace{1,\dots,1}_{N-K}\,)
=\det\left[h_{\bar\nu_i-\bar\ka_j-i+j}(\,\underbrace{1,\dots,1}_{N-K}\,)\right]
\end{equation}
where the order of the determinant is any number greater than or equal to
$\ell(\bar\nu)$ (the number of nonzero coordinates in $\bar\nu$) and
\begin{equation}\label{eqG.4}
h_m(\,\underbrace{1,\dots,1}_{N-K}\,)=\begin{cases}\dfrac{(m+N-K-1)!}{m!(N-K-1)!},
& m\ge0\\ 0, & m<0 \end{cases}
\end{equation}
(The proof of the first equality in \eqref{eqG.3} is an easy exercise, and the
remaining equalities are standard facts.)

Combining \eqref{eqG.2}, \eqref{eqG.3}, and \eqref{eqG.4} we get a closed
expression for the relative dimension \eqref{eqG.1}. However, it is unclear how
one could use it for the problem of asymptotic analysis that we need. The formulas of
Section \ref{D}, on the contrary, are difficult to prove, but they have the
advantage to be well adapted to asymptotic analysis. Another their advantage is
that they involve determinants of order $K$, while the order of determinant in
\eqref{eqG.3} is generically $N-1$. Because of this, for $N\gg K$ and generic
$\nu$ the formulas of Section \ref{D} seem to be more efficient than
\eqref{eqG.3} from the purely computational viewpoint, too.

\end{remark}

\section{A Cauchy-type identity}\label{A}

The classical Cauchy identity for the Schur symmetric functions is
$$
\prod_{i,j}\frac1{1-x_iy_j}=\sum_\mu S_\mu(x_1,x_2,\dots)S_\mu(y_1,y_2,\dots),
$$
see e.g. \cite[Section I.4]{Ma95}. Here summation is over all partitions $\mu$
and  $S_\mu(x_1,x_2,\dots)$ denotes the Schur function indexed by $\mu$. For
finitely many indeterminates the identity takes the form
\begin{equation}\label{eqA.1}
\prod_{i=1}^N\prod_{j=1}^K\frac1{1-x_iy_j}=\sum_{\ell(\mu)\le\min(N,K)}
S_\mu(x_1,\dots,x_N)S_\mu(y_1,\dots,y_K).
\end{equation}
Here the Schur functions turn into the Schur polynomials and $\ell(\mu)$
denotes the length of partition $\mu$, i.e. the number of its nonzero parts.

The purpose of this section is to derive an analog of identity \eqref{eqA.1}
where the Schur polynomials in $x$'s are replaced by the shifted Schur
polynomials \cite{OO97a}, and the Schur polynomials in $y$'s are replaced by
other Schur-type functions, the dual symmetric Schur functions \cite{Mo09}. Let
us give their definition.

The {\it shifted Schur polynomial\/} with $N$ variables and index $\mu$ is
given by formula
\begin{equation}\nonumber
S^*_\mu(x_1,\dots,x_N)=\frac{\det[(x_i+N-i)^{\down
\mu_j+N-j}]}{\prod_{i<j}(x_i-x_j-i+j)},
\end{equation}
Here indices $i$ and $j$ range over $\{1,\dots,N\}$, and $x^{\down m}$ is our
notation for the $m$th falling factorial power of variable $x$,
\begin{equation}
x^{\down m}=\frac{\Ga(x+1)}{\Ga(x+1-m)}=x(x-1)\dots(x-m+1).
\end{equation}
The polynomial $S^*_\mu(x_1,\dots,x_N)$ is symmetric in shifted variables
$x'_i:=x_i-i$, and one has
\begin{equation}\nonumber
S^*_\mu(x_1,\dots,x_N)=S_\mu(x'_1,\dots,x'_N)+\textrm{lower degree terms}.
\end{equation}
This implies that, as functions in shifted variables $x'_1,\dots,x'_N$, the
polynomials $S^*_\mu$ form a basis in the ring $\C[x'_1,\dots,x'_N]^\sym$ of
$N$-variate symmetric polynomials. For more detail, see \cite{OO97a}.

By the {\it dual Schur symmetric function\/} in $K$ variables with index $\mu$
we mean the following function
\begin{equation}\label{eqA.6}
\si_\mu(t_1,\dots,t_K)=(-1)^{K(K-1)/2}\,
\dfrac{\det\left[\dfrac{\Ga(t_i+j-\mu_j)}{\Ga(t_i+1)}\right]}{\prod_{i<j}(t_i-t_j)},
\end{equation}
where $i$ and $j$ range over $\{1,\dots,K\}$ and the matrix in the numerator is
of order $K$. The $(i,j)$ entry of this matrix is a rational function in
variable $t_i$, so that $\si_\mu$ is a rational function in $t_1,\dots,t_K$.
Clearly, it is symmetric.

Let $\C(t_1,\dots,t_K)^\sym\subset\C(t_1,\dots,t_K)$ denote the subfield of
symmetric rational functions and
$\C(t_1,\dots,t_K)^\sym_\reg\subset\C(t_1,\dots,t_K)^\sym$ be the subspace of
functions regular about the point $(t_1,\dots,t_K)=(\infty,\dots,\infty)$. We
will also regard the space $\C(t_1,\dots,t_K)^\sym_\reg$ as a subspace in
$\C[[t_1^{-1},\dots,t_K^{-1}]]^\sym$, the ring of symmetric formal power series
in variables $t_1^{-1}, \dots,t_K^{-1}$. There is a canonical topology in this
ring: the $I$-adic topology determined by the ideal $I$ of the series without
the constant term. The Schur polynomials in $t_1^{-1}, \dots,t_K^{-1}$ form a
topological basis in $\C[[t_1^{-1},\dots,t_K^{-1}]]^\sym$, meaning that every
element of the ring is uniquely represented as an infinite series in these
polynomials.

We claim that functions $\si_\mu$ belong to $\C(t_1,\dots,t_K)^\sym_\reg$ and
form another topological basis in the ring
$\C[[t_1^{-1},\dots,t_K^{-1}]]^\sym$. Indeed, $\si_\mu$ is evidently symmetric.
To analyze its behavior about $(\infty,\dots,\infty)$, set $y_i:=t_i^{-1}$ and
observe that
\begin{equation}\nonumber
\frac{(-1)^{K(K-1)/2}}{\prod_{i<j}(t_i-t_j)}=\frac{(y_1\dots
y_K)^{K-1}}{\prod_{i<j}(y_i-y_j)}
\end{equation}
and
\begin{equation}\nonumber
y_i^{K-1}\frac{\Ga(t_i+j-\mu_j)}{\Ga(t_i+1)}=y_i^{\mu_j+K-j}+\textrm{higher
degree terms in $y_i$}.
\end{equation}
It follows that
$$
\si_\mu(t_1,\dots,t_K)=S_\mu(y_1,\dots,y_K)+\textrm{higher degree terms in
$y_1,\dots,y_K$},
$$
which entails our claim.

Note that functions $\si_\mu$ are a special case of more general {\it
multi-parameter\/} dual Schur functions defined in \cite{Mo09}.

In the definitions above we tacitly assumed that $\ell(\mu)$ does not exceed
the number of variables; otherwise the corresponding function is set to be
equal to zero. Under this convention the following {\it stability property\/}
holds:
\begin{equation}\nonumber
S^*_\mu(x_1,\dots,x_N)\big |_{x_N=0}=S^*_\mu(x_1,\dots,x_{N-1}), \quad
\si_\mu(t_1,\dots,t_K)\big |_{t_K=\infty}=\si_\mu(t_1,\dots,t_{K-1}).
\end{equation}
Both relations are verified in the same way as the stability property for the
ordinary Schur polynomials. The detailed argument for the first relation can be
found in \cite[Proposition 1.3]{OO97a}.

\begin{proposition}[Cauchy-type identity, cf. \eqref{eqA.1}]
One has
\begin{equation}\label{eqA.2}
\prod_{i=1}^N\prod_{j=1}^K\frac{t_j+i}{t_j+i-x_i}=\sum_{\ell(\mu)\le\min(N,K)}
S^*_\mu(x_1,\dots,x_N)\si_\mu(t_1,\dots,t_K).
\end{equation}
\end{proposition}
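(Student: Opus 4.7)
My plan is to prove the identity by a Cauchy--Binet argument after rewriting both sides in determinantal form. The key is that $S^*_\mu$ and $\sigma_\mu$ have bialternant (ratio-of-determinants) formulas with Vandermonde-like denominators $V(x) := \prod_{i<j}(x_i - x_j - i + j) = \prod_{i<j}(y_i - y_j)$, where $y_i := x_i - i$, and $V(t) := \prod_{i<j}(t_i - t_j)$.

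First, under the substitution $y_i = x_i - i$, the LHS rewrites as $\prod_{j=1}^K Q(t_j)/P(t_j)$ with $Q(t) = \prod_{i=1}^N (t+i)$ and $P(t) = \prod_{i=1}^N (t - y_i)$. Multiplying the identity by $V(x) V(t)$ transforms it into the purely determinantal statement
$$V(x) V(t) \prod_{j=1}^K \frac{Q(t_j)}{P(t_j)} = (-1)^{K(K-1)/2} \sum_{\mu} \det\bigl[(x_i + N - i)^{\downarrow \mu_j + N - j}\bigr]_{i,j=1}^{N} \det\bigl[\Gamma(t_i + j - \mu_j)/\Gamma(t_i+1)\bigr]_{i,j=1}^{K}.$$

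Next, I would invoke a Cauchy--Binet/Laplace expansion together with the Cauchy determinant identity
$$\det\bigl[(t_j - y_{i_k})^{-1}\bigr]_{j,k=1}^K = V(t) \prod_{1 \le a<b \le K}(y_{i_a} - y_{i_b}) \bigg/ \prod_{j,k=1}^{K}(t_j - y_{i_k}),$$
applied to each $K$-subset $I = \{i_1 < \cdots < i_K\}$ of $\{1,\ldots,N\}$. This expresses $V(t) \prod_{j=1}^K 1/P(t_j)$ as a Laplace expansion over such $I$. Using the standard bijection between $K$-subsets $I$ and partitions $\mu$ with $\ell(\mu) \le K$ (via $i_k = \mu_k + N - k + 1$), this becomes a sum over $\mu$. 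The polynomial factor $\prod_j Q(t_j)$ is then absorbed column-by-column, turning the Cauchy-determinant entries $1/(t_j - y_{i_k})$ into the Gamma-ratio entries $\Gamma(t_j + k - \mu_k)/\Gamma(t_j+1)$ demanded by $\sigma_\mu$.

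The main obstacle is the dimensional mismatch: the shifted-Schur determinant on the RHS is $N \times N$ while the dual-Schur determinant is $K \times K$. The resolution is that for $\ell(\mu) \le K$ the last $N - K$ columns of $[(x_i + N - i)^{\downarrow \mu_j + N - j}]$ (those indexed by $j > K$) are fixed polynomials in $x_i$ independent of $\mu$; cofactor expansion along these columns factors the $N \times N$ determinant as a $K \times K$ determinant in the rows $I$ times a ``background'' determinant in the rows $\{1,\ldots,N\} \setminus I$. Matching this background with the left-over factors $\prod_{i \notin I, \, j}(t_j + i)/(t_j - y_i)$ produced by the Laplace-expanded LHS relies crucially on the specific shift ``$+i$'' in the denominator $t_j + i - x_i$, and constitutes the technical heart of the proof.
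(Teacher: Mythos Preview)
Your outline has a genuine gap. The central issue is the ``standard bijection between $K$-subsets $I$ of $\{1,\dots,N\}$ and partitions $\mu$ with $\ell(\mu)\le K$''. There is no such bijection: the first set has $\binom{N}{K}$ elements, while the second set is infinite (the sum in \eqref{eqA.2} is a genuine formal power series in $t_1^{-1},\dots,t_K^{-1}$, and generically every term $S^*_\mu(x)$ is nonzero). The usual bijection you are recalling sends $K$-subsets of $\{1,\dots,N\}$ to partitions contained in a $K\times(N-K)$ \emph{box}, which is a strictly smaller set. Consequently, the plan of writing $V(t)\prod_j P(t_j)^{-1}$ as a finite sum over $I$ and then matching it term-by-term with the infinite $\mu$-sum cannot succeed.

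A related confusion is the claim that multiplying by $\prod_j Q(t_j)$ ``absorbs column-by-column'' to turn Cauchy entries $1/(t_j-y_{i_k})$ into the $\Gamma$-ratio entries $\Gamma(t_j+k-\mu_k)/\Gamma(t_j+1)$: the former has a single pole at the \emph{variable} point $y_{i_k}=x_{i_k}-i_k$, while the latter has poles at \emph{fixed integers} depending on $\mu_k$; these cannot be equal. The summation index for Cauchy--Binet here must be the partition $\mu$ (equivalently $m_j=\mu_j+K-j$), not a subset of $x$-indices.

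For comparison, the paper avoids the dimensional mismatch entirely by first reducing to $N=K$ via the stability properties of $S^*_\mu$ and $\sigma_\mu$ (specialize extra variables to $0$ or $\infty$). Once $N=K$, there is only the single subset $I=\{1,\dots,K\}$, and one multiplies by $V(x')V(t)$ with $x'_i=x_i+K-i$, applies Cauchy--Binet \emph{over $m_1>\cdots>m_K\ge 0$} to collapse
\[
\sum_{m_1>\cdots>m_K}\det\bigl[(x'_i)^{\downarrow m_j}\bigr]\det\bigl[(t_i+K-1)^{-\downarrow m_j}\bigr]
=\det\Bigl[\sum_{m\ge0}\frac{(x'_i)^{\downarrow m}}{(t_j+K-1)^{\downarrow m}}\Bigr],
\]
then uses the one-variable identity $\sum_{m\ge0}x^{\downarrow m}/t^{\downarrow m}=(t+1)/(t+1-x)$, and finishes with the ordinary Cauchy determinant in the variables $x'_i,t_j$. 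Your Laplace expansion idea could in principle be made to work for general $N>K$ (expand the $N\times N$ shifted-Schur determinant along its last $N-K$ columns, \emph{then} for each fixed $I$ perform the infinite Cauchy--Binet sum over $\mu$), but as written the roles of $I$ and $\mu$ are conflated, and the ``technical heart'' you defer is exactly where the argument currently breaks. The stability reduction is both simpler and already sufficient.
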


Here the infinite series in the right-hand side is the expansion with respect
to the topological basis $\{\si_\mu\}$ of
$(\C[x_1,\dots,x_N]^\sym)[[t_1^{-1},\dots,t_K^{-1}]]^\sym$, the topological
ring of symmetric formal power series in variables $t_1^{-1},\dots,t_K^{-1}$
with coefficient ring $\C[x_1,\dots,x_N]^\sym$. A more general form of the
identity can be found in \cite{Mo09}.

\begin{proof}
It suffices to prove \eqref{eqA.2} for $N=K$. Indeed, the general case is
immediately reduced to this one by making use of the stability property by
adding a few extra variables $x_i$ or $t_j$ and then specializing them to 0 or
$\infty$, respectively. Thus, in what follows we will assume $N=K$.

In the simplest case $N=K=1$, \eqref{eqA.2} takes the form
\begin{equation}\label{eqA.3}
\frac{t+1}{t+1-x}=\sum_{m=0}^\infty\frac{x^{\down m}}{t^{\down m}},
\end{equation}
which is just formula (12.3) in \cite{OO97a}.

Using \eqref{eqA.3} we will reduce the case $N=K>1$ of \eqref{eqA.2} to
Cauchy's determinant formula. Indeed, set $x'_i=x_i+K-i$, $m_i=\mu_i+K-i$, and
denote by symbol $V(\,\cdot\,)$ the Vandermonde in $K$ variables. Multiplying
the right-hand side of \eqref{eqA.2} by $V(x'_1,\dots,x'_K)V(t_1,\dots,t_K)$ we
transform it to
\begin{equation}\label{eqA.4}
(-1)^{K(K-1)/2}\prod_{j=1}^K\frac{\Ga(t_j+K)}{\Ga(t_j+1)}\,
\sum_{m_1>\dots>m_K\ge0}\det\left[{x'_i}^{\down
m_j}\right]\det\left[\dfrac1{(t_i+K-1)^{\down m_j}}\right],
\end{equation}
where both determinants are of order $K$.

A well-known trick allows one to write the sum in the right-hand side as a
single determinant:
$$
\sum_{m_1>\dots>m_K\ge0}\det\left[{x'_i}^{\down
m_j}\right]\det\left[\dfrac1{(t_i+K-1)^{\down m_j}}\right]=\det[A(i,j)]
$$
with
$$
A(i,j)=\sum_{m=0}^\infty\frac{{x'_i}^{\down m}}{(t_j+K-1)^{\down
m}}=\frac{t_j+K}{t_j+K-x'_i},
$$
where the last equality follows from \eqref{eqA.3}.

By Cauchy's determinant formula,
\begin{equation}\label{eqA.5}
\det[A(i,j)]=(-1)^{K(K-1)/2}\prod_{j=1}^K(t_j+K)\cdot
\frac{V(x'_1,\dots,x'_K)V(t_1,\dots,t_K)}{\prod_{i,j}(t_j+K-x'_i)}.
\end{equation}
Observe that $t_j+K-x'_i=t_j+i-x_i$. Taking this into account and plugging in
\eqref{eqA.5} instead of the sum in \eqref{eqA.4} we see that the plus-minus
sign disappears and the resulting expression for \eqref{eqA.4} coincides with
the left-hand side of \eqref{eqA.2} (for $N=K$) multiplied by the same product
of two Vandermonde determinants. This concludes the proof.
\end{proof}

\section{A generating function for the relative dimension}\label{B}

Throughout this section we assume that $N\ge K$ are two natural numbers, $\ka$
ranges over $\GT_K$ and $\nu$ ranges over $\GT_N$.

Set
\begin{equation}\label{eqB.7}
\begin{aligned} \Sf_{\ka\mid
N}(t_1,\dots,t_K)&=(-1)^{K(K-1)/2}\prod_{i=1}^K\frac{(N-K)!}{(N-K+i-1)!}\\
&\times \dfrac{\det\left[\dfrac{\Ga(t_i+1+N)\Ga(t_i+j-\ka_j)}
{\Ga(t_i+1)\Ga(t_i+j-\ka_j+N-K+1)}\right]} {V(t_1,\dots,t_K)},
\end{aligned}
\end{equation}
where the determinant is of order $K$ and
$V(t_1,\dots,t_K)=\prod_{i<j}(t_i-t_j)$, as above. The $(i,j)$ entry of the
matrix in the numerator is a rational function in $t_i$, which entails that
$\Sf_{\ka\mid N}(t_1,\dots,t_K)$ is an element of $\C(t_1,\dots,t_K)^\sym$.
Moreover, it is contained in $\C(t_1,\dots,t_K)^\sym_\reg$; this is readily
verified by passing to variables $y_i=t_i^{-1}$, as we already did in the case
of $\si_\mu$, see Section \ref{A}.

Next, in accordance with \cite[(12.3)]{OO97a}, we set
\begin{equation}\nonumber
H^*(t;\nu)=\prod_{j=1}^N\frac{t+j}{t+j-\nu_j}
\end{equation}
and more generally
\begin{equation}\nonumber
H^*(t_1,\dots,t_K;\nu)=H^*(t_1;\nu)\dots H^*(t_K;\nu).
\end{equation}
For $\nu$ fixed, $H^*(t_1,\dots,t_K;\nu)$ is obviously an element of
$\C(t_1,\dots,t_K)^\sym_\reg$, too.

Finally, recall the notation $\Dim_{K,N}(\ka,\nu)$ and $\Dim_N\nu$  introduced
in subsection \ref{sectF.2} We agree that $\Dim_{K,K}(\ka,\nu)$ is the
Kronecker delta $\de_{\ka\nu}$.

The purpose of this section is to prove the following claim.

\begin{proposition}\label{propB.1}
Let $N\ge K$. For any fixed $\nu\in\GT_N$, the function
$H^*(t_1,\dots,t_K;\nu)$ can be uniquely expanded into a finite linear
combination of the functions $\Sf_{\ka\mid N}(t_1,\dots,t_K)$, and this
expansion takes the form
\begin{equation}\label{eqB.4}
H^*(t_1,\dots,t_K;\nu)=\sum_{\ka\in\GT_K}\frac{\Dim_{K,N}(\ka,\nu)}{\Dim_N\nu}\,\Sf_{\ka\mid
N}(t_1,\dots,t_K).
\end{equation}
\end{proposition}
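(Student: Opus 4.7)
The natural start is to specialize the Cauchy-type identity \eqref{eqA.2} at $x_i=\nu_i$. Under this substitution, the left-hand side of \eqref{eqA.2} becomes $\prod_{i=1}^N\prod_{j=1}^K(t_j+i)/(t_j+i-\nu_i)=H^*(t_1,\dots,t_K;\nu)$, so one immediately obtains
\begin{equation*}
H^*(t_1,\dots,t_K;\nu)=\sum_{\mu:\,\ell(\mu)\le K}S^*_\mu(\nu_1,\dots,\nu_N)\,\si_\mu(t_1,\dots,t_K).
\end{equation*}
It remains to recognize this right-hand side as the claimed expansion in the family $\{\Sf_{\ka\mid N}\}_{\ka\in\GT_K}$.

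For this, I would multiply both sides of \eqref{eqB.4} through by the Vandermonde $V(t_1,\dots,t_K)=\prod_{i<j}(t_i-t_j)$, turning each side into an antisymmetric rational function expressible as a single $K\times K$ determinant. On the right, by \eqref{eqB.7}, $V(t)\cdot\Sf_{\ka\mid N}(t)$ is by definition an explicit Gamma-function determinant, multiplied by a scalar prefactor. On the left, I would partial-fraction expand $H^*(t;\nu)=1+\sum_{i=1}^N c_i(\nu)/(t+i-\nu_i)$---its $N$ simple poles lie at $t=\nu_i-i$ with residues computable via Weyl's formula \eqref{eqG.2}---and apply the classical Cauchy determinant formula to expand $V(t)\prod_\ell H^*(t_\ell;\nu)$ as a sum, over $K$-subsets $J\subset\{1,\dots,N\}$, of Cauchy-type determinants $\det[1/(t_\ell+i-\nu_i)]_{i\in J}$ weighted by explicit numerical coefficients.

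Each such $K$-subset $J=\{i_1<\dots<i_K\}$ naturally encodes a signature $\ka\in\GT_K$ (the ``$K$-row'' carved out of the trapezoidal scheme by $J$), and matching determinants term by term reduces to the Jacobi--Trudi-type formula $\Dim_{K,N}(\ka,\nu)=s_{\bar\nu/\bar\ka}(1^{N-K})=\det[h_{\bar\nu_i-\bar\ka_j-i+j}(1^{N-K})]$ from Remark \ref{remG.1}: the $N-K$ indices in $\{1,\dots,N\}\setminus J$ should contribute precisely the binomial factors $h_m(1^{N-K})=\binom{m+N-K-1}{N-K-1}$ that reconstruct the extra Gamma-function shifts $\Ga(t_i+1+N)/\Ga(t_i+j-\ka_j+N-K+1)$ appearing in \eqref{eqB.7}. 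Uniqueness of the expansion follows from the linear independence of $\{\Sf_{\ka\mid N}\}$: from \eqref{eqB.7}, the $(i,j)$ entry of the defining matrix has simple poles in $t_i$ only at the integers $\ka_j-j-r$ for $r=0,1,\dots,N-K$, and this pole data distinguishes different $\ka$.

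I expect the main obstacle to be the combinatorial bookkeeping in this determinant expansion: verifying that the residue calculations on the left, combined with the $h_m(1^{N-K})$ factors arising from the unused indices, assemble into precisely $\Dim_{K,N}(\ka,\nu)/\Dim_N\nu$ for each $\ka$. The case $N=K$ is a clean sanity check: there the proposition reduces to Cauchy's classical determinant identity together with Weyl's formula \eqref{eqG.2}, with only $\ka=\nu$ contributing on the right.
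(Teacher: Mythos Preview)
Your opening observation---specializing \eqref{eqA.2} at $x_i=\nu_i$ to obtain $H^*(t_1,\dots,t_K;\nu)=\sum_\mu S^*_\mu(\nu)\si_\mu(t)$---is correct and is exactly how the paper begins. But the paper then goes in a completely different direction: it invokes the \emph{coherence relation} $S^*_\mu(\nu)/(N)_\mu=\sum_\ka\La^N_K(\nu,\ka)\,S^*_\mu(\ka)/(K)_\mu$ for shifted Schur polynomials, which immediately gives $H^*(t;\nu)=\sum_\ka\La^N_K(\nu,\ka)\,D_{N,K}H^*(t;\ka)$ with $D_{N,K}\si_\mu=\tfrac{(N)_\mu}{(K)_\mu}\si_\mu$. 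The remaining work is to show $\Sf_{\ka\mid N}=D_{N,K}\Sf_{\ka\mid K}$; this is done by realizing $D_{N,N-1}$ as the explicit difference operator $\tfrac1{(N-1)^{\downarrow K}}V^{-1}\circ\prod_i(t_i+N-(t_i+1)\tau_i)\circ V$ and checking its action on the gamma entries by hand. You never use your first observation again, so your route is entirely separate from the paper's.

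Your direct route has a real gap at the matching step. The partial-fraction expansion of $V(t)\prod_\ell H^*(t_\ell;\nu)$ into a sum over $K$-subsets $J\subset\{1,\dots,N\}$ of Cauchy determinants $\det[1/(t_\ell-n_{i_b})]$ can indeed be made to work (provided you first pull out the numerator $\prod_\ell\prod_i(t_\ell+i)$; with the constant term $1$ in your stated expansion the subsets of size $<K$ do \emph{not} cancel). But the correspondence $J\leftrightarrow\ka$ you invoke is not a bijection. The $(i,j)$ entry of the $\Sf_{\ka\mid N}$ determinant has $N-K+1$ simple poles in $t_i$, at $k_j,k_j-1,\dots,k_j-(N-K)$; partial-fractioning these and expanding the determinant gives terms indexed by $(\ka;r_1,\dots,r_K)$ with poles at $k_c-r_c$, and many such tuples---across different $\ka$'s---feed the same $K$-subset $J$, while other tuples give poles at integers \emph{not} among the $\nu_i-i$ and must cancel in the sum. (Already for $N=2$, $K=1$, $\nu=(2,0)$ the compatible $\ka\in\{0,1,2\}$ outnumber the $1$-subsets $J$, and $\ka=0,2$ correspond to no pole $\nu_i-i$.) So ``matching determinants term by term'' is not a single identification but an identity of the form $C_J=\sum_{(\ka,\vec r)}(\cdots)$ for each $J$, plus cancellation identities for the spurious poles; you have not said what these identities are or why they reduce to the skew Jacobi--Trudi determinant of Remark~\ref{remG.1} (whose size is generically $N-1$, not $K$). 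This is the missing idea, not just bookkeeping.
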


We regard this as a generating function for the quantities $\Dim(\ka,\nu)/\Dim
\nu$. In the case $K=1$, $\ka$ is simply an integer $k$, and the above
expansion turns into
\begin{equation}\nonumber
\begin{aligned}
H^*(t;\nu)&=\sum_{k\in\Z}\frac{\Dim_{K,N}(k,\nu)}{\Dim_N\nu}\,\frac{(t+1)\dots(t+N)}{(t+1-k)\dots(t+N-k)}\\
&=\sum_{k\in\Z}\frac{\Dim_{K,N}(k,\nu)}{\Dim_N\nu}\,H^*(t;(k^N)),
\end{aligned}
\end{equation}
where $(k^N)=(k,\dots,k)\in\GT_N$.

\begin{proof}
The proof is rather long and will be divided in a few steps. In what follows
$\mu$ always stands for an arbitrary partition with $\ell(\mu)\le K$.

{\it Step\/} 1. Set
\begin{equation}\label{eqB.12}
(N)_\mu=\prod_{i=1}^{\ell(\mu)}(N-i+1)_{\mu_i}=\prod_{i=1}^{\ell(\mu)}(N-i+1)\dots(N-i+\mu_i)
\end{equation}
and note that $(N)_\mu\ne0$ because $N\ge K\ge\ell(\mu)$.

Let
\begin{equation}\nonumber
D_{K,N}:\C[[t_1^{-1},\dots,t_K^{-1}]]^\sym\to
\C[[t_1^{-1},\dots,t_K^{-1}]]^\sym
\end{equation}
denote the linear operator defined on the topological basis $\{\si_\mu\}$ by
\begin{equation}\label{eqB.9}
D_{N,K}:\si_\mu\;\to\;\frac{(N)_\mu}{(K)_\mu}\,\si_\mu.
\end{equation}

We claim that
\begin{equation}\label{eqB.1}
H^*(t_1,\dots,t_K;\nu)=\sum_{\ka\in\GT_K}\La^N_K(\nu,\ka)D_{N,K}H^*(t_1,\dots,t_K;\ka).
\end{equation}
This is interpreted as an equality in $\C[[t_1^{-1},\dots,t_K^{-1}]]^\sym$.
Note that the sum is finite because for $\nu$ fixed, the quantity
$\La^N_K(\nu,\ka)$ does not vanish only for finitely many $\ka$'s.

Indeed, by virtue of \eqref{eqA.2}  we have
$$
H^*(t_1,\dots,t_k;\nu)=\sum_\mu
S^*_\mu(\nu_1,\dots,\nu_N)\si_\mu(t_1,\dots,t_K)
$$
and likewise
$$
H^*(t_1,\dots,t_k;\ka)=\sum_\mu
S^*_\mu(\ka_1,\dots,\ka_K)\si_\mu(t_1,\dots,t_K).
$$
Therefore, \eqref{eqB.1} is equivalent to
\begin{equation}\label{eqB.2}
\frac{S^*_\mu(\nu_1,\dots,\nu_N)}{(N)_\mu}=\sum_{\ka}\La^N_K(\nu,\ka)
\frac{S^*_\ka(\ka_1,\dots,\ka_K)}{(K)_\mu}.
\end{equation}
But \eqref{eqB.2} follows from the {\it coherence relation\/} for the shifted
Schur polynomials, which says that
\begin{equation}\label{eqB.3}
\frac{S^*_\mu(\nu_1,\dots,\nu_N)}{(N)_\mu}=\sum_{\la:\,
\la\prec\nu}\frac{\Dim_{N-1}\la}{\Dim_N\nu}\,\frac{S^*_\mu(\la_1,\dots,\la_{N-1})}{(N-1)_\mu}.
\end{equation}
See  \cite[(10.30)]{OO97a}, which coincides with \eqref{eqB.3} within an
obvious change of notation. To deduce \eqref{eqB.2} from \eqref{eqB.3} we use
induction on $N$. For the initial value $N=K$, \eqref{eqB.2} is trivial (with
the understanding that $\La^K_K$ is the identity matrix), and the induction
step is implemented by \eqref{eqB.3}, because $\La^N_K(\nu,\ka)$ satisfies the
same recursion
$$
\La^N_K(\nu,\ka)=\sum_{\la:\, \la\prec\nu}\frac{\Dim_{N-1}\la}{\Dim_N\nu}\;
\La^{N-1}_K(\la,\ka), \qquad N>K.
$$
This completes the proof of \eqref{eqB.1}.

{\it Step\/} 2.  Our next goal is to prove the equality
\begin{equation}\label{eqB.5}
\frac{\Sf_{\ka\mid N}(t_1,\dots,t_K)}{\Dim_K\ka}=D_{N,K}H^*(t_1,\dots,t_K;\ka).
\end{equation}
Then \eqref{eqB.4} will immediately follow from \eqref{eqB.1}. Note that
\eqref{eqB.5} does not involve $\nu$.

On this step we will check that \eqref{eqB.5} holds for $N=K$, that is
\begin{equation}\label{eqB.6}
\frac{\Sf_{\ka\mid K}(t_1,\dots,t_K)}{\Dim_K\ka}=H^*(t_1,\dots,t_K;\ka).
\end{equation}

By virtue of \eqref{eqB.7}, the left-hand side of \eqref{eqB.6} equals
\begin{equation}\nonumber
\frac{(-1)^{K(K-1)/2}}{\prod_{i=1}^K(K-1)!\cdot \Dim_K\ka\cdot
V(t_1,\dots,t_k)}\; \det\left[\dfrac{\Ga(t_i+1+K)\Ga(t_i+j-\ka_j)}
{\Ga(t_i+1)\Ga(t_i+j-\ka_j+1)}\right].
\end{equation}
Setting $k_j=\ka_j-j$, $j=1,\dots,K$, this expression can be easily transformed
to
\begin{equation}\nonumber
\frac{(-1)^{K(K-1)/2}\prod_{i,j=1}^K(t_i+j)}{V(k_1,\dots,k_K)
V(t_1,\dots,t_k)}\; \det\left[\dfrac1{t_i-k_j}\right].
\end{equation}
Since
$$
\det\left[\dfrac1{t_i-k_j}\right]=\frac{(-1)^{K(K-1)/2}V(k_1,\dots,k_K)
V(t_1,\dots,t_k)}{\prod_{i,j}(t_i-k_j)},
$$
the final result is
$$
\prod_{i,j=1}^K\frac{t_i+j}{t_i+j-\ka_j}=H^*(t_1,\dots,t_K;\ka),
$$
as desired.

{\it Step\/} 3. By virtue of Step 2, to prove \eqref{eqB.5} it suffices to show
that
$$
\frac{\Sf_{\ka\mid N}}{\Dim_K\ka}=D_{N,K}\left(\frac{\Sf_{\ka\mid
K}}{\Dim_K\ka}\right),
$$
or, equivalently,
\begin{equation}\label{eqB.8}
\Sf_{\ka\mid N}=D_{N,K}\Sf_{\ka\mid N}.
\end{equation}
A possible approach would consist in computing explicitly the expansion
$$
\Sf_{\ka\mid N}(t_1,\dots,t_K)=\sum_\mu C(\mu;N)\si_\mu(t_1,\dots,t_K)
$$
from which one could see that the coefficients satisfy the relation
$$
C(\mu;N)=\frac{(N)_\mu}{(K)_\mu}\, C(\mu;K).
$$
However, we did not work out this approach. Instead of it we adopt the
following strategy: {}From the definition of $D_{N,K}$, see \eqref{eqB.9}, it
is clear that it suffices to prove that
\begin{equation}\label{eqB.10}
\Sf_{\ka\mid N}=D_{N,N-1}\Sf_{\ka\mid N-1}, \quad \forall N>K.
\end{equation}
To do this we will show that $D_{N,N-1}$ can be implemented by a certain
difference operator in variables $(t_1,\dots,t_K)$. Then this will allow us to
easily verify \eqref{eqB.10}.

On this step we find the difference operator in question:
\begin{equation}\label{eqB.11}
D_{N,N-1}=\frac1{(N-1)^{\down K}}\,\frac1
V\circ\prod_{i=1}^K(t_i+N-(t_i+1)\tau_i))\circ V,
\end{equation}
where $V$ is the operator of multiplication by
$V(t_1,\dots,t_K)$, and $\tau_i$ is the shift operator
$$
(\tau f)(t):=f(t+1).
$$
acting on variable $t_i$.

To verify that \eqref{eqB.11} agrees with the initial definition of
$D_{N,N-1}$, see \eqref{eqB.9}, we have to prove that the difference operator
in the right-hand side of \eqref{eqB.11} acts on $\si_\mu$ as multiplication by
$(N)_\mu/(N-1)_\mu$.

By the very definition of $(N)_\mu$, see \eqref{eqB.12},
$$
\frac{(N)_\mu}{(N-1)_\mu}=\prod_{j=1}^K
\frac{N-j+\mu_j}{N-j}=\frac1{(N-1)^{\down K}}\, \prod_{j=1}^N (N-j+\mu_j).
$$
Taking into account the same factor $1/(N-1)^{\down K}$ in front of
\eqref{eqB.11} and the definition of $\si_\mu$ given in \eqref{eqA.6}, we see
that the desired claim reduces to the following one: the action of the
difference operator
$$
\prod_{i=1}^K(t_i+N-(t_i+1)\tau_i))
$$
on the function
$$
\det\left[\dfrac{\Ga(t_i+j-\mu_j)}{\Ga(t_i+1)}\right]
$$
amounts to multiplication by $\prod_{j=1}^N (N-j+\mu_j)$. This in turn reduces
to the following claim, which is easily verified:
$$
(t+N-(t+1)\tau)\; \dfrac{\Ga(t-m)}{\Ga(t+1)}\;=\;
(N+m)\,\dfrac{\Ga(t-m)}{\Ga(t+1)}, \quad \forall m\in\Z.
$$
This completes the proof of \eqref{eqB.11}.

{\it Step\/} 4. Here we will establish \eqref{eqB.10} with the difference
operator defined by \eqref{eqB.11}. By the definition of $\Sf_{\mu\mid N}$, see
\eqref{eqB.7}, we have to prove that operator
$$
\prod_{i=1}^K(t_i+N-(t_i+1)\tau_i))
$$
sends function
$$
\det\left[\dfrac{\Ga(t_i+N)\Ga(t_i+j-\ka_j)}
{\Ga(t_i+1)\Ga(t_i+j-\ka_j+N-K)}\right]
$$
to
$$
(N-K)^K\cdot \det\left[\dfrac{\Ga(t_i+1+N)\Ga(t_i+j-\ka_j)}
{\Ga(t_i+1)\Ga(t_i+j-\ka_j+N-K+1)}\right].
$$
This reduces to the following claim, which is easily verified: for any
$k\in\Z$,
$$
(t+N-(t+1)\tau))\dfrac{\Ga(t+N)\Ga(t-k)}
{\Ga(t+1)\Ga(t-k+N-K)}=(N-K)\dfrac{\Ga(t+1+N)\Ga(t-k)}
{\Ga(t+1)\Ga(t-k+N-K+1)}.
$$

Thus we have completed the proof of expansion \eqref{eqB.4}.

{\it Step\/} 5. It remains to prove the uniqueness claim of the proposition.
That is, to prove that the functions $\Sf_{\ka\mid N}(t_1,\dots,t_K)$ with $N$
fixed and parameter $\ka$ ranging over $\GT_K$ are linearly independent. It
suffices to do this for the minimal value $N=K$, because of relation
\eqref{eqB.8} and the fact that operator $D_{N,K}$ is invertible. Next, by
virtue of \eqref{eqB.6}, this is equivalent to the claim that the functions
$H^*(t_1,\dots,t_K;\ka)$ are linearly independent.

Recall that
$$
H^*(t_1,\dots,t_K;\ka)=H^*(t_1; \ka)\dots H^*(t_K;\ka),
$$
where
$$
H^*(t;\ka)=\prod_{j=1}^K\frac{t+j}{t+j-\ka_j}.
$$
The numerators of the fractions do not depend on $\ka$ and so may be ignored.
Set $k_j=\ka_j-j$ and observe that $k_1>\dots>k_K$. Thus, we are led to the
claim that the family of the functions
$$
f_{k_1,\dots,k_K}(t_1,\dots,t_K):=\prod_{i=1}^K\prod_{j=1}^K\frac 1{t_i-k_j}
$$
depending on an arbitrary $K$-tuple $k_1>\dots>k_K$ of integers is linearly
independent. But this is obvious, because for given a $K$-tuple of parameters,
the corresponding function $f_{k_1,\dots,k_K}(t_1,\dots,t_K)$ can be
characterized as the only function of the family that has a nonzero
multidimensional residue at $t_1=k_1, \dots, t_K=k_K$.

\end{proof}

The next proposition is used in informal Remark \ref{remB.1} below and then in
the proof of Proposition \ref{propE.1}.

\begin{proposition}\label{propB.2}
We have
\begin{equation}\label{eqB.14}
H^*(t;\nu)=\Phi(u;\om(\nu)),
\end{equation}
provided that variables $t$ and $u$ are related by the mutually inverse
linear-fractional transformations
\begin{equation}\label{eqB.13}
t=-\frac12+\frac{N}{u-1}, \qquad u=1+\frac{N}{t+\frac12}.
\end{equation}
\end{proposition}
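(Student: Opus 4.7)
The plan is a direct substitution and simplification, matching the two product formulas factor by factor. First, I would observe that the exponential prefactor in \eqref{eqF.11} is trivial when $\om=\om(\nu)$: by the very definition of $\om(\nu)$ one has
$\ga^\pm = \de^\pm - \sum_i(\al_i^\pm+\be_i^\pm) = \bigl(|\nu^\pm| - \sum_i(a_i^\pm+b_i^\pm)\bigr)/N = 0$,
since the modified Frobenius coordinates of any partition $\la$ satisfy $\sum_i(a_i+b_i)=|\la|$ (each diagonal box contributes its arm and leg and itself). Hence in $\Phi(u;\om(\nu))$ only the finitely many factors indexed by $i\le d(\nu^\pm)$ are nontrivial.

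Next, the substitution gives $(u-1)/N = 1/(t+\tfrac12)$ and, after a short computation, $(u^{-1}-1)/N = -1/(t+N+\tfrac12)$. Plugging these into each surviving factor and cancelling the factor common to numerator and denominator yields
\begin{equation*}
\frac{1+\be_i^+(u-1)}{1-\al_i^+(u-1)} = \frac{t+(\nu^+)'_i-i+1}{t+i-\nu^+_i}, \qquad
\frac{1+\be_i^-(u^{-1}-1)}{1-\al_i^-(u^{-1}-1)} = \frac{t+N+i-(\nu^-)'_i}{t+N+\nu^-_i-i+1}.
\end{equation*}
So $\Phi(u;\om(\nu))$ is an explicit finite product of $d(\nu^+)+d(\nu^-)$ such rational factors in $t$.

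To match this to $H^*(t;\nu)$, I would split the latter according to the sign of $\nu_i$: zero entries give trivial factors, the positive entries ($p:=\ell(\nu^+)$ of them) give $\prod_{i=1}^p(t+i)/(t+i-\nu^+_i)$, and the negative entries ($q:=\ell(\nu^-)$), after re-indexing $j=N+1-i$, give $\prod_{j=1}^q(t+N+1-j)/(t+N+1-j+\nu^-_j)$. Matching these two products against the two products from the previous paragraph reduces, after cancelling the $d(\nu^\pm)$ denominator factors that are common to both sides, to the classical multiset identity
\begin{equation*}
\{1,2,\dots,\ell(\la)\} = \{\la'_i-i+1 : 1\le i\le d(\la)\} \;\sqcup\; \{i-\la_i : d(\la)<i\le \ell(\la)\},
\end{equation*}
valid for any partition $\la$. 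Applying it to $\la=\nu^+$ settles the $+$ part directly; applying it to $\la=\nu^-$, after the substitution $s=-t-N-1$, settles the $-$ part.

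The main obstacle is just bookkeeping. The combinatorial identity above is a standard one-line observation from the boundary lattice path of a Young diagram (the two subsets on the right enumerate the levels of horizontal versus vertical boundary steps on the two sides of the main diagonal), so the only technical care needed is to track the two shifts $t\mapsto t$ and $t\mapsto t+N$ on the $\pm$ halves and to apply the identity with the correct sign and conjugation conventions for $\nu^-$.
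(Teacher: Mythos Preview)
Your proof is correct and follows essentially the same route as the paper's, which also rests on the factorization of $H^*(t;\nu)$ in terms of the modified Frobenius coordinates of $\nu^\pm$. The only difference is one of packaging: the paper invokes the identity
\[
\prod_{i=1}^N\frac{s-\tfrac{N+1}2+i}{s-\tilde\nu_i}
=\prod_{i=1}^{d^+}\frac{s-\tfrac N2+b_i^+}{s-\tfrac N2-a_i^+}\cdot
\prod_{i=1}^{d^-}\frac{s+\tfrac N2-b_i^-}{s+\tfrac N2+a_i^-}
\]
as a citation to \cite[Proposition~4.1]{BO05} and then performs the change of variable, whereas you perform the change of variable first and then supply a self-contained proof of the equivalent combinatorial fact via the multiset identity $\{1,\dots,\ell(\la)\}=\{\la'_i-i+1:i\le d(\la)\}\sqcup\{i-\la_i:d(\la)<i\le\ell(\la)\}$. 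Your version is slightly more elementary in that it does not defer to an external reference; the paper's version is more compact. Substantively they are the same argument.
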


\begin{proof}
Recall that $\om(\nu)$ is defined in terms of the modified Frobenius
coordinates $\{a_i^\pm, b_i^\pm: 1\le i\le d^\pm\}$ of the Young diagrams
$\nu^\pm$, see the beginning of Section \ref{G}. Set
$$
\wt\nu_i=\nu_i+\tfrac{N+1}2-i.
$$
That is,
$$
(\wt\nu_i,\dots,\wt\nu_N)=(\nu_1,\dots,\nu_N)+(\tfrac{N-1}2,\,
\tfrac{N-1}2-1\,,\dots,\, -\tfrac{N-1}2+1,\, -\tfrac{N-1}2)
$$

The next identity follows from \cite[Proposition 4.1]{BO05}  (cf.
\cite[Proposition 1.2]{IO03}):
\begin{equation}\label{eqB.15}
\prod_{i=1}^N\frac{s-\tfrac{N+1}2+i}{s-\wt\nu_i}=\prod_{i=1}^{d^+}\frac{s-\tfrac
N2+b_i^+}{s-\tfrac N2-a_i^+}\cdot \prod_{i=1}^{d^-}\frac{s+\tfrac
N2-b_i^-}{s+\tfrac N2+a_i^-}.
\end{equation}

Plug in $s=t+\frac{N+1}2$ into \eqref{eqB.15}, then the left-hand side equals
$H^*(t;\nu)$. Let us transform the right-hand side. Variables $s$ and $u$ are
related to each other via
$$
s=\frac N2\cdot\frac{u+1}{u-1}, \qquad u=\frac{s+\frac N2}{s-\frac N2}.
$$
Recall also that the coordinates of $\om(\nu)$ are given by
$$
\al^\pm_i=\frac{a^\pm_i}N, \quad \be^\pm_i=\frac{b^\pm_i}N, \quad
\de^\pm=\frac{|\nu^\pm|}N=\sum(\al^\pm_i+\be^\pm_i).
$$
{}From this it is easy to check that the right-hand side of \eqref{eqB.15}
equals
$$
\prod_{i=1}^{d^+}\frac{1+\be^+_i(u-1)}{1-\al^+_i(u-1)}\cdot
\prod_{i=1}^{d^-}\frac{1+\be^-_i(u^{-1}-1)}{1-\al^-_i(u^{-1}-1)}=\Phi(u;\om(\nu)),
$$
as desired.
\end{proof}

\begin{remark}\label{remB.1}
Let variables $t_1,\dots,t_K$ be related to variables $u_1, \dots,u_K$ via
\eqref{eqB.13}. Assume that variables $u_i$ are fixed and $N$ goes to infinity,
so that variables $t_i$ grow linearly in $N$. Then it is easy to check that in
this limit regime
$$
\Sf_{\ka\mid N}(t_1,\dots,t_K)\to S_\ka(u_1,\dots,u_K).
$$
Taking into account \eqref{eqB.14} we see that expansion \eqref{eqB.4} mimics
expansion \eqref{eqF.5}. (We recall that the latter expansion has the form
\begin{align*}
\Phi(u_1;\om)\dots\Phi(u_K;\om)& =\sum_{\ka\in\GT_K}\varphi_\ka(\om)
S_\ka(u_1,\dots,u_K) \\
& =\sum_{\ka\in\GT_K}\det[\varphi_{\ka_i-i+j}(\om)]_{i,j=1}^K
S_\ka(u_1,\dots,u_K).)
\end{align*}

This observation makes it plausible that if $N\to\infty$ and
$\nu\in\GT_N$ varies together with $N$ in such a way that $\om(\nu)$ converges
to a point $\om\in\Om$, then the relative dimension
$\Dim_{K,N}(\ka,\nu)/\Dim_N\nu$ tends to $\varphi_\ka(\om)$. However, the
rigorous proof of this assertion (and of the stronger one stated in the Uniform
Convergence Theorem) requires substantial efforts. The first step made in the
next section is to obtain a determinantal formula for the relative dimension
mimicking the determinantal formula
$$
\varphi_\ka(\om)=\det[\varphi_{\ka_i-i+j}(\om)]_{i,j=1}^K.
$$

\end{remark}

\section{A Jacobi-Trudi-type formula}\label{C}

The classical Jacobi-Trudi formula expresses the Schur function $S_\mu$ as a
determinant composed from the complete symmmetric functions $h_m$, which are
special cases of the Schur functions:
$$
S_\mu=\det[h_{\mu_i-i+j}].
$$
This formula can be obtained in various ways (see e.g. \cite[Ch. I,
(3.4)]{Ma95}, \cite[Section 7.16]{St99}). In particular, it can be easily
derived from the Cauchy identity  \eqref{eqA.1}: To do this one multiplies both
sides of identity  \eqref{eqA.1} by $V_K(y_1,\dots,y_K)$ and then
$S_\mu(x_1,\dots,x_N)$ is computed as the coefficient of the monomial
$y_1^{\mu_1+K-1}y_2^{\mu_2+K-2}\cdots y_K^{\mu_K}$ (cf. the second proof of
Theorem 7.16.1 in \cite{St99}). The same idea, albeit in a somewhat disguised
form, is applied in the proof of Proposition \ref{propC.1} below.

Observe that the structure of formula \eqref{eqB.7} for the functions
$\Sf_{\ka\mid N}$ is similar to that for the Schur polynomials. This suggests
the idea that identity \eqref{eqB.4} may be viewed as a kind of Cauchy
identity, so that one may expect a Jacobi-Trudi formula for the quantities
$\Dim_{K,N}(\ka,\nu)/\Dim_N\nu$. The purpose of the present section is to
derive such a formula. But first we have to introduce necessary notation.

For a finite interval $\LL$ of the lattice $\Z$, let $V_\LL$ denote the space
of rational functions in variable $t\in\C\cup\{\infty\}$, regular everywhere
including $t=\infty$, except possible simple poles at some points in
$\Z\setminus\LL$. Thus, $V_\LL$ is spanned by $1$ and the fractions
$(t-m)^{-1}$, where $m$ ranges over $\Z\setminus\LL$.

\begin{lemma}\label{lemmaC.1}
The functions
\begin{equation}\label{eqC.9}
f_{\LL,k}(t)=\frac{\prod_{x\in\LL}(t-x)}{\prod_{x\in\LL}(t-x-k)}, \quad k\in\Z,
\end{equation}
form one more basis in $V_\LL$.
\end{lemma}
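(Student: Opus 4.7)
The plan is to set up a finite-dimensional filtration of $V_\LL$ on which linear independence and dimension counting do all the work, so one never has to invert the change-of-basis matrix explicitly.

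Write $\LL=\{a,a+1,\dots,b\}$, so that $V_\LL$ has the explicit basis $\{1\}\cup\{(t-m)^{-1}:m\in\Z\setminus\LL\}$. For each $N\ge 0$ let $V_\LL^{(N)}\subset V_\LL$ be the subspace of functions whose poles lie in $\{a-N,\dots,a-1\}\cup\{b+1,\dots,b+N\}$; this is a subspace of dimension exactly $2N+1$, and $V_\LL=\bigcup_N V_\LL^{(N)}$. The first step is to check that $f_{\LL,k}\in V_\LL$: after canceling the common factors corresponding to $\LL\cap(\LL+k)$, one has
$$f_{\LL,k}(t)=\frac{\prod_{x\in\LL\setminus(\LL+k)}(t-x)}{\prod_{y\in(\LL+k)\setminus\LL}(t-y)},$$
so $f_{\LL,k}$ has simple poles exactly at the points of $(\LL+k)\setminus\LL\subset\Z\setminus\LL$ and tends to $1$ at $\infty$.

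Next I would analyze these pole locations carefully. The elementary equivalence $b+j\in\LL+k\iff j\le k\le j+|\LL|-1$ (for $j\ge1$) shows that, for $k>0$, every pole of $f_{\LL,k}$ lies in $\{b+1,\dots,b+k\}$ and $b+k$ itself is a pole; the symmetric statement for $k<0$ places the poles in $\{a+k,\dots,a-1\}$ with the extremal pole at $a+k$. In particular $f_{\LL,k}\in V_\LL^{(N)}$ whenever $|k|\le N$, giving $2N+1$ candidate elements of the $2N+1$-dimensional space $V_\LL^{(N)}$.

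Linear independence of $\{f_{\LL,k}\}$ is now a residue argument. Suppose $\sum_{k\in S}c_k f_{\LL,k}=0$ with $S$ finite and some $c_k\ne 0$. Let $k_+=\max S$; if $k_+>0$, the pole analysis above shows that $b+k_+$ is a pole of $f_{\LL,k_+}$ but of no other $f_{\LL,k}$ with $k\in S$ (because $b+k_+$ being a pole of $f_{\LL,k}$ forces $k\ge k_+$). Taking the residue at $b+k_+$ gives $c_{k_+}=0$, a contradiction; after stripping all positive indices, the same argument applied to $k_-=\min S$ kills the negative ones, leaving only $c_0\cdot 1=0$.

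The main obstacle is really just the careful pole bookkeeping in the previous paragraph; once one trusts the identity $b+j\in\LL+k\iff j\le k\le j+|\LL|-1$, everything else is dimension counting. Combining the two steps, the $2N+1$ functions $\{f_{\LL,k}:|k|\le N\}$ are linearly independent inside the $(2N+1)$-dimensional space $V_\LL^{(N)}$, hence form a basis of it; since $V_\LL=\bigcup_N V_\LL^{(N)}$ and the bases are nested, $\{f_{\LL,k}\}_{k\in\Z}$ is a basis of $V_\LL$.
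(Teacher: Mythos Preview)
Your proof is correct and rests on the same triangular observation as the paper's: for $k>0$ the function $f_{\LL,k}$ has its rightmost pole at $b+k$, which none of $f_{\LL,0},\dots,f_{\LL,k-1}$ share (and symmetrically for $k<0$). The paper packages this as an inductive spanning argument (any function with poles only in $\{b+1,\dots,b+k\}$ is expressed through $f_{\LL,0},\dots,f_{\LL,k}$, uniquely), whereas you package it as linear independence via the extremal-pole residue plus a dimension count on the filtration $V_\LL^{(N)}$; these are two sides of the same triangularity, and your version is a bit more explicit about the pole bookkeeping.
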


\begin{proof}
Obviously, $f_{\LL,k}$ is in $V_\LL$ for every $k\in\Z$. In particular,
$f_{\LL,0}$ is the constant function $1$. On the other hand, given
$k=1,2,\dots$, any function in $V_{\LL}$ with the only possible poles on the
right of $\LL$, at distance at most $k$ from the right endpoint of $\LL$, can
be expressed through $f_{\LL,0}, \dots, f_{\LL,k}$, as is easily verified by
induction on $k$\,. Moreover, such an expression is unique. Likewise, the same
holds for functions with poles located on the left of $\LL$.
\end{proof}

By the lemma, any function $f\in V_\LL$ is uniquely written as a finite linear
combination
\begin{equation}\nonumber
f=\sum_{k\in\Z}c_k f_{\LL,k}.
\end{equation}
For the coefficients $c_k$ we will use the notation
\begin{equation}\nonumber
c_k=(f:f_{\LL,k}).
\end{equation}

Set
\begin{equation}\nonumber
\LL(N)=\{-N,\dots,-1\}.
\end{equation}
{}From the very definition of the function $H^*(t;\nu)$ one sees that it lies
in $V_{\LL(N)}$ for every $\nu\in\GT_N$. Consequently, the coefficients
$(H^*(\,\cdot\,;\nu):f_{\LL(N),k})$ are well defined. We also
need more general coefficients $(H^*(\,\cdot\,;\nu):f_{\LL,k})$, where $\LL$ is
a subinterval in $\LL(N)$. They are well defined, too, because
$V_{\LL}\supseteq V_{\LL(N)}$.

The coefficients $(H^*(\,\cdot\,;\nu):f_{\LL(N),k})$ will play the role of the
$h_k$-functions in variables $\nu=(\nu_1,\dots,\nu_N)$, while more general
coefficients $(H^*(\,\cdot\,;\nu):f_{\LL,k})$ should be interpreted as some
modification of those ``$h_k$-functions''. It is worth noting that the
conventional complete homogeneous symmetric functions are indexed by
nonnegative integers, while in our situation the index ranges over the set $\Z$
of all integers.

The purpose of the present section is to prove the following proposition.

\begin{proposition}[Jacobi-Trudi-type formula]\label{propC.1}
Let $N\ge K\ge1$, $\nu\in\GT_N$, and $\ka\in\GT_K$. For $j=1,\dots,K$, set
$$
\LL(N,j)=\{-N+K-j,\dots,-j\}.
$$
One has
\begin{equation}\label{eqC.1}
\frac{\Dim_{K,N}(\ka,\nu)}{\Dim_N\nu}
=\det\left[\left(H^*(\,\cdot\,;\nu):f_{\LL(N,j), \, \ka_i-i+j}\right)\right]_{i,j=1}^K,\\
\end{equation}
\end{proposition}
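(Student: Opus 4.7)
The plan is to mimic the classical derivation of Jacobi--Trudi from a Cauchy-type identity by multiplying both sides of \eqref{eqB.4} by an appropriate determinantal factor and then extracting coefficients of a linearly independent family of antisymmetric functions.

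First, I would verify the factorization
$$A^{[k]}_j(t):=\frac{\Gamma(t+1+N)\Gamma(t+j-k)}{\Gamma(t+1)\Gamma(t+j-k+N-K+1)}=P_j(t)\,f_{\LL(N,j),k}(t),$$
where $P_j(t):=(t+1)\cdots(t+j-1)\cdot(t+N-K+j+1)\cdots(t+N)$ is a polynomial of degree $K-1$ independent of $k$; equivalently, $P_j(t)=(t+1)\cdots(t+N)/[(t+j)(t+j+1)\cdots(t+N-K+j)]$. The determinantal expression \eqref{eqB.7} then reads $V(t_1,\dots,t_K)\Sf_{\ka\mid N}=C_N\det[A^{[\ka_j]}_j(t_i)]_{i,j}$ with $C_N$ the explicit prefactor in \eqref{eqB.7}. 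Writing $h^{(j)}_k:=(H^*(\,\cdot\,;\nu):f_{\LL(N,j),k})$, multiplying the expansion $H^*(t;\nu)=\sum_k h^{(j)}_k f_{\LL(N,j),k}(t)$ by $P_j(t)$ yields the key identity $P_j(t)H^*(t;\nu)=\sum_k h^{(j)}_k A^{[k]}_j(t)$.

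Next, I would multiply both sides of \eqref{eqB.4} by $\det[P_j(t_i)]_{i,j=1}^K$. Since $\prod_i H^*(t_i;\nu)$ is symmetric in $t_1,\dots,t_K$, the left-hand side equals $\det[P_j(t_i)H^*(t_i;\nu)]_{i,j}$, and column multilinearity (using the identity just derived) gives
$$\det[P_j(t_i)H^*(t_i;\nu)]_{i,j}=\sum_{(k_1,\dots,k_K)\in\Z^K}\prod_j h^{(j)}_{k_j}\det[A^{[k_j]}_j(t_i)]_{i,j}.$$
On the other side, since $\det[P_j(t_i)]$ is antisymmetric of degree at most $K-1$ in each $t_i$, it equals $Q\cdot V(t_1,\dots,t_K)$ for a scalar $Q=Q(N,K)$; the multiplied right-hand side of \eqref{eqB.4} becomes $QC_N\sum_{\ka\in\GT_K}c_\ka\det[A^{[\ka_j]}_j(t_i)]_{i,j}$ with $c_\ka:=\Dim_{K,N}(\ka,\nu)/\Dim_N\nu$.

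To match coefficients, I would observe that $A^{[k]}_j(t)$ depends on $(j,k)$ only via $j-k$, and denote the common value by $B^{[m]}(t)$. Reparameterizing the left-hand sum by $m_j:=j-k_j$ and sorting into strictly increasing tuples $(s_1<\dots<s_K)$ via a permutation $\tau$---with $\sgn(\tau)$ picked up from the column-permutation rule for determinants---one recognizes the inner sum over $\tau$ as a determinant, rewriting the left side as $\sum_{s}\det[B^{[s_j]}(t_i)]_{i,j}\cdot\det[h^{(j)}_{j-s_\ell}]_{j,\ell}$. The right side is naturally indexed by strictly increasing tuples $s_j=j-\ka_j$ via the bijection with $\GT_K$. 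Linear independence of the family $\{\det[B^{[s_j]}(t_i)]\}_{s_1<\dots<s_K}$, which is a reformulation of the uniqueness clause in Proposition~\ref{propB.1}, then lets me equate coefficients and, after transposing the resulting matrix and relabeling indices, obtain $c_\ka\cdot QC_N=\det[h^{(j)}_{\ka_i-i+j}]_{i,j}$.

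Finally, the normalization $QC_N=1$ I would fix by specializing to $\nu=(0,\dots,0)\in\GT_N$: then $H^*\equiv 1$ forces $h^{(j)}_k=\delta_{k,0}$, so the displayed determinant equals $1$ when $\ka=0$ (identity matrix) and vanishes otherwise (using that the sequence $i-\ka_i$ is strictly increasing in $i$ for any signature, so for $\ka\ne 0$ the column index $i-\ka_i$ falls outside $\{1,\dots,K\}$ for some $i$ and that row is identically zero), matching $c_\ka=\delta_{\ka,0}$. The main obstacle I expect is the bookkeeping of signs when reparameterizing to sorted tuples and transposing the matrix at the end; the rest is the algebraic factorization of the first step and an invocation of Proposition~\ref{propB.1}.
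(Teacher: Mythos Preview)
Your argument is correct and follows essentially the same architecture as the paper's proof: your polynomials $P_j$ are exactly the paper's functions $\varphi_j=g_{-j}$, and both proofs multiply \eqref{eqB.4} by $\det[P_j(t_i)]$, expand each column via $P_j(t)H^*(t;\nu)=\sum_k h^{(j)}_k A^{[k]}_j(t)$, reindex by $m=j-k$, and match coefficients against the linearly independent family $\{\det[B^{[s_\ell]}(t_i)]\}$ supplied by the uniqueness clause of Proposition~\ref{propB.1}.

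The one substantive difference is the normalization step. The paper computes $\det[P_j(t_i)]$ explicitly via Krattenthaler's determinant lemma (Lemma~3 of \cite{Kr99}) and checks that it equals $V(t_1,\dots,t_K)/\const(N,K)$; you instead observe on degree/antisymmetry grounds that $\det[P_j(t_i)]=Q\cdot V$ for some scalar $Q$, derive $QC_N\cdot c_\ka=\det[h^{(j)}_{\ka_i-i+j}]$, and then pin down $QC_N=1$ by specializing to $\nu=0$ (where $H^*\equiv1$, $h^{(j)}_k=\de_{k,0}$, and $c_\ka=\de_{\ka,0}$). Your route is more elementary in that it avoids the nontrivial determinant evaluation; the paper's route has the advantage of producing the explicit value of $\det[P_j(t_i)]$, though that value is not needed elsewhere. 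Your bookkeeping of signs via the strict monotonicity of $i\mapsto i-\ka_i$ and the transposition at the end is correct.
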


Note that the interval $\LL(N,j)\subset\Z$ comprises $N-K+1$ points and is
entirely contained in $\LL(N)$. As $j$ ranges from 1 to $K$, this interval
moves inside $\LL(N)$ from the rightmost possible position to the leftmost one.

In the simplest case $K=1$, \eqref{eqC.1} agrees with \eqref{eqB.4}. Indeed,
then the signature $\ka$ is reduced to a single integer $k\in\Z$, and formula
\eqref{eqC.1} turns into the following one
\begin{equation}\label{eqC.2}
\frac{\Dim_{1,N}(k,\nu)}{\Dim_N\nu}=\left(H^*(\,\cdot\,;\nu):f_{\LL(N),
k}\right).
\end{equation}
On the other hand, $f_{\LL(N), k}$ coincides with $\Sf_{k\mid N}$, so that
\eqref{eqC.2} is a special case of \eqref{eqB.4} corresponding to the
univariate case $K=1$:
\begin{equation}\nonumber
H^*(t;\nu)=\sum_{k\in\Z} \frac{\Dim_{1,N}(k,\nu)}{\Dim_N\nu} \Sf_{k\mid N}(t).
\end{equation}

A naive Jacobi-Trudi-type generalization of \eqref{eqC.2} to the case $K>1$
would consist in taking the determinant
$$
\det\left[\left(H^*(\,\cdot\,;\nu):f_{\LL(N), \, \ka_i-i+j}\right)\right].
$$
But this does not work, and it turns out that we have to appropriately modify
the univariate coefficients by shrinking $\LL(N)$ to a subinterval which varies
together with the column number $j$.

Note that a similar effect arises in the Jacobi-Trudi-type formula for the
shifted Schur functions or other variations of the Schur functions, see
\cite[Chapter I, Section 3, Example 21]{Ma95}, \cite[Section 13]{OO97a}: In the
Jacobi-Trudi determinant, the $h$-functions need to be appropriately modified
according to the column number.

\begin{proof}[Proof of Proposition \ref{propC.1}]
{\it Step\/} 1. Parameter $\nu$ being fixed, we will omit it from the notation
below. In particular, we abbreviate $H^*(t)=H^*(t;\nu)$. Assume we dispose of
an expansion into a finite sum, of the form
\begin{equation}\label{eqC.3}
H^*(t_1)\dots H^*(t_K)=\sum_{\ka}C(\ka) \Sf_{\ka\mid N}(t_1,\dots,t_K),
\end{equation}
with some coefficients $C(\ka)$. Then, due to the uniqueness claim of
Proposition \ref{propB.1}, the coefficients $C(\ka;\nu)$ must be the same as
the quantities $\Dim_{K,N}(\ka,\nu)/\Dim_N\nu$.

The functions $\Sf_{\ka\mid N}(t_1,\dots,t_K)$ can be written in the form
$$
\Sf_{\ka\mid
N}(t_1,\dots,t_K)=\const(N,K)\,\frac{\det\left[g_{k_j}(t_i)\right]_{i,j=1}^K}{V(t_1,\dots,t_K)},
$$
where
\begin{gather*}
k_1=\ka_1-1,\dots,k_K=\ka_K-K,\\
g_k(t)=\frac{\Ga(t+1+N)\Ga(t-k)}{\Ga(t+1)\Ga(t-k+N-K+1)}, \quad k\in\Z,\\
\const(N,K)=(-1)^{K(K-1)/2}\,\prod_{i=1}^K\frac{(N-K)!}{(N-K+i-1)!}.
\end{gather*}

Assume we have found some rational functions $\varphi_1(t),\dots, \varphi_K(t)$
with the following two properties:

$\bullet$ First, for every $a=1,\dots,K$ there exists a finite expansion
\begin{equation}\label{eqC.7}
H^*(t)\varphi_a(t)=\sum_{k\in\Z}C^a_k\,g_k(t)
\end{equation}
with some coefficients $C^a_k$.

$\bullet$ Second,
\begin{equation}\label{eqC.6}
\det\left[\varphi_a(t_i)\right]_{a,i=1}^K=\frac{V(t_1,\dots,t_K)}{\const(N,K)}.
\end{equation}

We claim that then \eqref{eqC.3} holds with coefficients
\begin{equation}\label{eqC.10}
C(\ka):=C(k_1,\dots,k_K):=\det\left[C^a_{k_b}\right]_{a,b=1}^K.
\end{equation}
Indeed, first of all, note that these coefficients vanish for all but finitely
many $\ka$'s (because of finiteness of expansion \eqref{eqC.7}), so that the
future expansion \eqref{eqC.3} will be finite. Next, applying \eqref{eqC.7} and
\eqref{eqC.6}, we have
\begin{gather*}
\sum_{k_1>\dots>k_K}C(k_1,\dots,k_K)\det\left[g_{k_j}(t_i)\right]_{i,j=1}^K
=\sum_{k_1>\dots>k_K}\det\left[C^a_{k_b}\right]_{a,b=1}^K
\det\left[g_{k_j}(t_i)\right]_{i,j=1}^K\\
=\det\left[\sum_{k\in\Z}C^a_k g_k(t_i)\right]_{i,a=1}^K
=\det\left[H^*(t_i)\varphi_a(t_i)\right]_{i,a=1}^K\\
=H^*(t_1)\dots H^*(t_K)\det\left[\varphi_a(t_i)\right]_{a,i=1}^K
=H^*(t_1)\dots H^*(t_K)\,\frac{V(t_1,\dots,t_K)}{\const(N,K)},
\end{gather*}
which is equivalent to the desired equality
\begin{equation*}
H^*(t_1)\dots H^*(t_K)=\sum_{\ka}\det\left[C^a_{k_b}\right]_{a,b=1}^K
\Sf_{\ka\mid N}(t_1,\dots,t_K).
\end{equation*}

{\it Step\/} 2. Now we exhibit the functions $\varphi_a(t)$:
\begin{equation}\label{eqC.11}
\varphi_a(t)=g_{-a}(t)=\frac{\Ga(t+a)\Ga(t+N+1)}{\Ga(t+1)\Ga(t+N-K+a+1)}, \quad
a=1,\dots,K.
\end{equation}
Let us examine what \eqref{eqC.7} means. Dividing the both sides of
\eqref{eqC.7} by $\varphi_a(t)$ we get
\begin{equation*}
H^*(t)=\sum_{k\in\Z}C^a_k\,\frac{g_k(t)}{\varphi_a(t)}.
\end{equation*}
But
$$
\frac{g_k(t)}{\varphi_a(t)}=\frac{\Ga(t+N-K+a+1)\Ga(t-k)}{\Ga(t+a)\Ga(t-k+N-K+1)}
=\frac{(t+a)(t+a+1)\dots(t+a+N-K)}{(t-k)(t-k+1)\dots(t-k+N-K)}.
$$
In the notation of \eqref{eqC.9}, this fraction is nothing else than
$f_{\LL,k+a}$, where $\LL$ denotes the interval $\{-N+K-a,\dots, -a\}$ in $\Z$.
It follows that the desired expansion \eqref{eqC.7} does exist and (restoring
the detailed notation $H^*(t;\nu)$) the corresponding coefficients are
$$
C^a_k=\left(H^*(\,\cdot\,;\nu): f_{\{-N+K-a,\dots, -a\},k+a}\right).
$$

Then the prescription \eqref{eqC.10} gives us
\begin{gather*}
C(\ka)=\det\left[C^a_{k_b}\right]_{a,b=1}^K=\det\left[C^a_{\ka_b-b}\right]_{a,b=1}^K\\
=\det\left[ \left(H^*(\,\cdot\,;\nu): f_{\{-N+K-a,\dots, -a\},\ka_b-b+a}\right)
\right]_{a,b=1}^K.
\end{gather*}
This is exactly \eqref{eqC.1}, within the renaming of indices $(b,a)\to(i,j)$.

{\it Step\/} 3. It remains to check that the functions \eqref{eqC.11} satisfy
\eqref{eqC.6}. That is, renaming $a$ by $j$,
\begin{multline*}
\det\left[\frac{\Ga(t_i+j)\Ga(t_i+N+1)}{\Ga(t_i+1)\Ga(t_i+N-K+j+1)}\right]_{i,j=1}^K\\
=V(t_1,\dots,t_K)\,(-1)^{K(K-1)/2}\prod_{i=1}^K\frac{(N-K+i-1)!}{(N-K)!}
\end{multline*}

or

\begin{multline}\label{eqC.12}
\det\left[(t_i+1)\dots(t_i+j-1)(t_i+N-K+j+1)\dots(t_i+N)\right]_{i,j=1}^K\\
=V(t_1,\dots,t_K)\,(-1)^{K(K-1)/2}\prod_{i=1}^K\frac{(N-K+i-1)!}{(N-K)!}.
\end{multline}

This identity is a particular case of Lemma 3 in Krattenthaler's paper
\cite{Kr99}. For the reader's convenience we reproduce the statement of this
lemma in the original notation:

Let $X_1,\dots,X_n$, $A_2,\dots,A_n$, and $B_2, \dots ,B_n$ be indeterminates.
Then
\begin{multline}\label{eqC.4}
\det\left[(X_i+A_n)(X_i+A_{n-1})\cdots(X_i+A_{j+1})
(X_i+B_j)(X_i+B_{j-1})\cdots (X_i+B_2)\right]_{i,j=1}^n\\
=\prod _{1\le i<j\le n} (X_i-X_j)\prod _{2\le i\le j\le n} ^{}(B_i-A_j).
\end{multline}

Setting $n=K$, $X_i=t_i$, $A_j=N-K+j$ and
$$
(B_2,\dots,B_n)=(1,\dots,K-1)
$$
one sees that the determinant in \eqref{eqC.4} turns into that in
\eqref{eqC.12}. Next,
$$
\prod _{1\le i<j\le n} ^{}(X_i-X_j)=V(t_1,\dots,t_K)
$$
and
\begin{gather*}
\prod _{2\le i\le j\le n} (B_i-A_j)=\prod _{1\le i< j\le K}
^{}(i-(N-K+j))\\
=(-1)^{K(K-1)/2}\prod _{1\le i< j\le K} (N-K+j-i))=(-1)^{K(K-1)/2}\prod_{j=1}^K
\frac{(N-K+j-1)!}{(N-K)!},
\end{gather*}
which agrees with \eqref{eqC.12}.

This completes the proof of the proposition.

\end{proof}

\section{Expansion on rational fractions}\label{D}

In this section we derive an expression for the coefficients
$(H^*(\,\cdot\,;\nu): f_{\LL,k})$ making formula \eqref{eqC.1} available for
practical use.

Fix a finite interval $\LL=\{a,a+1,\dots,b-1,b\}\subset\Z$. As explained in the
beginning of Section \ref{C}, the space $V_\LL$ has a basis consisting of the
rational fractions
$$
f_{\LL,k}(t)=\frac{(t-b)(t-b+1)\dots(t-a)}{(t-b-k)(t-b-k)\dots(t-a-k)}, \qquad
k\in\Z.
$$
For a rational function $G(t)$ from $V_\LL$, we write its expansion in the
basis $\{f_{\LL,k}\}_{k\in\Z}$ as
$$
G(t)=\sum_{k\in\Z}(G:f_{\LL,k}) f_{\LL,k}(t)
$$
and denote by $\Res_{t=x}G(t)$ the residue of $G(t)$ at a point $x\in\Z$.

\begin{proposition}\label{propD.1}
Assume $n:=b-a+1\ge2$. In the above notation
\begin{equation}\nonumber
(G:f_{\LL,k})=\begin{cases} (n-1)\sum\limits_{m\ge k}
\dfrac{(m-k+1)_{n-2}}{(m)_{n}}\,
\Res_{t=b+m}G(t), & k\ge1,\\
-(n-1)\sum\limits_{m\ge |k|} \dfrac{(m-|k|+1)_{n-2}}{(m)_{n}}\,
\Res_{t=a-m}G(t), & k\le-1,\\
G(\infty)+\sum\limits_{m\ge1}\dfrac1{m+n-1}\left(-\Res_{t=b+m}G(t)+\Res_{t=a-m}G(t)\right),
&k=0.
\end{cases}
\end{equation}
\end{proposition}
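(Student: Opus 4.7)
My plan is to reduce the statement to an identity on elementary building blocks via partial fractions, and then verify that identity by matching residues. Every $G \in V_\LL$ admits a finite partial-fraction decomposition
\begin{equation*}
G(t) = G(\infty) + \sum_{p \in \Z \setminus \LL} \frac{\Res_{t=p}G(t)}{t - p},
\end{equation*}
the sum being over the (finitely many) poles of $G$. Since both sides of each formula in the proposition are linear in $G$, it suffices to check them on three elementary classes: (a)~$G = 1$; (b)~$G(t) = 1/(t-b-m)$ with $m \ge 1$; (c)~$G(t) = 1/(t-a+m)$ with $m \ge 1$. Case (a) is immediate: $1 = f_{\LL,0}$, $G(\infty) = 1$, and all residues vanish, so the formulas return $c_0 = 1$ and $c_{k \ne 0} = 0$. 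Case (c) reduces to case (b) by the involution $t \mapsto -t$, which carries $V_\LL$ onto $V_{-\LL}$, sends $f_{\LL,k}(t)$ to $f_{-\LL,-k}(-t)$, and turns residues at $b+m$ into minus residues at $a-m$; this converts the $k \le -1$ formula into the $k \ge 1$ formula applied to the transformed data.

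The core thus reduces to the explicit identity
\begin{equation*}
\frac{1}{t - b - m} = -\frac{1}{m+n-1} + \sum_{k=1}^{m}(n-1)\frac{(m-k+1)_{n-2}}{(m)_n}\, f_{\LL,k}(t).
\end{equation*}
Both sides lie in $V_\LL$, so their equality is equivalent to matching (i)~the value at $\infty$ and (ii)~the residue at each pole $t = b+\ell$, $\ell \ge 1$. For (i), using $f_{\LL,k}(\infty) = 1$, one checks that the coefficient sum collapses to $1/(m+n-1)$ via the identity $\sum_{j=1}^{m}(j)_{n-2} = (m)_{n-1}/(n-1)$, a telescoping consequence of $(j+1)_{n-1} - (j)_{n-1} = (n-1)(j+1)_{n-2}$. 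For (ii), a direct computation from $f_{\LL,k}(t) = P(t)/P(t-k)$ with $P(t) = \prod_{x \in \LL}(t - x)$ gives
\begin{equation*}
\Res_{t=b+\ell} f_{\LL,k}(t) = \frac{(-1)^{k-\ell}(n+\ell-1)!}{(\ell-1)!\,(k-\ell)!\,(n+\ell-k-1)!} \quad \text{for } \ell \le k \le \ell+n-1
\end{equation*}
(and zero otherwise). At $\ell = m$ a single surviving term matches the residue $1$ on the left, while for $\ell < m$, after clearing common factorials, the required vanishing reduces to
\begin{equation*}
\sum_{j=0}^{n-1}(-1)^{j}\binom{n-1}{j}(d - j + 1)_{n-2} = 0, \qquad d = m - \ell \ge 1,
\end{equation*}
which is the $(n-1)$-st finite difference at $j = 0$ of the polynomial $(d-j+1)_{n-2}$ in $j$ of degree $n-2 < n-1$, and so vanishes; the extra summands indexed by $d+1 \le j \le n-1$ (when $d < n-1$) are zero because $(d-j+1)_{n-2}$ then contains a zero factor.

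I anticipate the main obstacle being the bookkeeping for this residue identity at $\ell < m$, though once packaged as a finite-difference identity it is routine; the $k = 0$ formula of the proposition then follows automatically from the constant term $-1/(m + n - 1)$ in the elementary expansion, combined with case (a). A tempting slicker alternative would be to build a kernel $H_k(t) = (n-1)(t-b-k+1)_{n-2}/(t-b)_n$ whose product with $G$ decays like $t^{-2}$ at infinity, so that the proposition's coefficients would emerge from the sum-of-residues-equals-zero identity applied to $H_k G$; however, $H_k$ has poles inside $\LL$ and nonzero values at $a - m$ and at $b + m$ for $m \le k - n + 1$ (when $k \ge n$), and those parasitic contributions would have to be controlled separately, which the partial-fraction plan cleanly avoids.
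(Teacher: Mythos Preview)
Your proof is correct and follows essentially the same route as the paper's: partial-fraction reduction to the elementary pole $1/(t-b-m)$, the reflection $t\mapsto -t$ to handle the other side, and verification of the key identity by matching the value at infinity and the residues at $b+\ell$, the latter reducing to the vanishing of an $(n-1)$st finite difference of a degree-$(n-2)$ polynomial. The only cosmetic difference is that the paper proves the value-at-infinity identity by induction on $m$ rather than your telescoping sum, which is the same computation.
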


\begin{proof}
It is easy to write the expansion of $G(t)$ in another basis of $V_\LL$, formed
by $1$ and the fractions $(t-x)^{-1}$, where $x$ ranges over $\Z\setminus\LL$:
\begin{equation}\label{eqD.1}
G(t)=G(\infty)+\sum_{m\ge1}\left(\frac{\Res_{t=b+m}G(t)}{t-(b+m)}
+\frac{\Res_{t=a-m}G(t)}{t-(a-m)}\right).
\end{equation}
Thus, to find the coefficients $(G:f_{\LL,k})$ it suffices to compute the
expansion of the elements of the second basis on the fractions $f_{\LL,k}$.

Obviously,
\begin{equation}\label{eqD.4}
1=f_{\LL,0}.
\end{equation}
Thus, the problem is to expand the functions $(t-(b+m))^{-1}$ and
$(t-(a-m))^{-1}$ with $m=1,2,\dots$\,. We are going to prove that
\begin{gather}
\frac1{t-(b+m)}=-\frac1{m+n-1}f_{\LL,0}+\frac{n-1}{(m)_n}
\sum_{k=1}^m(m-k+1)_{n-2}f_{\LL,k} \label{eqD.2}\\
\frac1{t-(a-m)}=\frac1{m+n-1}f_{\LL,0}-\frac{n-1}{(m)_n}
\sum_{k=1}^m(m-k+1)_{n-2}f_{\LL,-k} \label{eqD.3}
\end{gather}
The claim of the proposition immediately follows from \eqref{eqD.1}--\eqref{eqD.3}.

Observe that \eqref{eqD.3} is reduced to \eqref{eqD.2} by making use of
reflection $t\to-t$. Indeed, under this reflection the basis formed by $1$ and
$\{f_{\LL,k}\}$ is transformed into the similar basis with $\LL$ replaced with
$-\LL$ (that is, parameters $a$ and $b$ are replaced by $-b$ and $-a$,
respectively), while the fractions from the second basis are transformed into
the similar fractions but multiplied by $-1$. This explains the change of sign
in the right-hand side of \eqref{eqD.3} as compared with \eqref{eqD.2}.

Thus, it suffices to prove identity \eqref{eqD.2}. Since it is invariant under
the simultaneous shift of $t$, $a$, and $b$ by an integer, we may assume, with
no loss of generality, that $a=1$, $b=n$. Then the identity takes the form
\begin{multline}\label{eqD.5}
\frac1{t-n-m}=-\frac1{m+n-1}\\ +\frac{n-1}{(m)_n} \sum_{k=1}^m
(m-k+1)_{n-2}\frac{(t-1)\dots(t-n)}{(t-1-k)\dots(t-n-k)}.
\end{multline}

The left-hand side vanishes at $t=\infty$. Let us check that the same holds for
the right-hand side. Indeed, this amounts to the identity
$$
\frac{n-1}{(m)_{n}}\sum_{k=1}^m (m-k+1)_{n-2}=\frac1{m+n-1}, \quad n\ge2.
$$
Renaming $n-1$ by $n$, the identity can be rewritten as
$$
n\sum_{k=1}^m (m-k+1)\cdots(m-k+n-1)=m\cdots(m+n-1), \quad n\ge1,
$$
and then it is easily proved by induction on $m$.

Next, the only singularity of the left-hand side of \eqref{eqD.5} is the simple
pole at $t=n+m$ with residue 1. Let us check that the right-hand side has the
same singularity at this point. Indeed, the only contribution comes from the
$m$th summand, which has a simple pole at $t=n+m$ with residue
$$
\frac{(n-1)!}{(m)_n}\,\frac{(t-1)\dots(t-n)}{(t-1-m)\dots(t-n-m+1)}\,\Bigg|_{t=n+m}=1,
$$
as desired.

It remains to check that the right-hand side of \eqref{eqD.5} is regular at
points $t=n+1, \dots, n+m-1$. All possible poles are simple, so that it
suffices to check that the residue at every such point vanishes. In the
corresponding identity, we may formally extend summation to $k=1,\dots, m+n-2$,
because the extra terms actually vanish. This happens due to the factor
$(m-k+1)_{n-2}$.

Thus, compute the residue at a given point $s\in\{n+1,\dots, n+m-1\}$. The
terms that contribute to the residue are those with $k=s-n, s-n+1,\dots, s-1$,
$n$ summands total. Setting $j=k-(s-n)$, the sum of the residues has the form
$$
\frac{n-1}{(m)_n}\sum_{j=0}^{n-1}(m-j-s+n+1)_{n-2}\,\frac{(-1)^j}{j!(n-1-j)!}\,
(s-1)\dots(s-n).
$$
The fact that this expression vanishes follows from a more general claim: For
any polynomial $P$ of degree $\le n-2$,
$$
\sum_{j=0}^{n-1}P(j) \,\frac{(-1)^j}{j!(n-1-j)!}=0.
$$

Finally, to prove the last identity, apply the differential operator
$(x\frac{d}{dx})^\ell$ to $(1-x)^{n-1}$ and then set $x=1$. For
$\ell=0,1,\dots,n-2$ this gives
$$
\sum_{j=0}^{n-1}j^\ell \,\frac{(-1)^j}{j!(n-1-j)!}=0.
$$

\end{proof}

Propositions \ref{propC.1} and \ref{propD.1} together give the following
explicit formula.

\begin{theorem}\label{thmD.1}
Let $\ka\in\GT_K$ and $\nu\in\GT_N$, where $N>K\ge1$, and recall the notation
$$
H^*(t;\nu)=\frac{(t+1)\dots(t+N)}{(t+1-\nu_1)\dots(t+N-\nu_N)}.
$$
One has
\begin{equation}\label{eqD.6}
\frac{\Dim_{K,N}(\ka,\nu)}{\Dim_N\nu}=\det\left[A_N(i,j)\right]_{i,j=1}^K,
\end{equation}
where the entries of the $K\times K$ matrix $A_N=\left[A_N(i,j)\right]$ are
defined according to the following rule, which depends on the column number
$j=1,\dots,K$ and the integer
$$
k:=k(i,j)=\ka_i-i+j.
$$
\medskip

$\bullet$ If $k\ge1$, then
\begin{equation}\label{f1'}
A_N(i,j)=(N-K)\sum\limits_{m\ge k} \dfrac{(m-k+1)_{N-K-1}}{(m)_{N-K+1}}\,
\Res_{t=-j+m}H^*(t;\nu)
\end{equation}

$\bullet$ If $k\le-1$, then
\begin{equation}\label{f2'}
A_N(i,j)= -(N-K)\sum\limits_{m\ge |k(i,j)|}
\dfrac{(m-|k|+1)_{N-K-1}}{(m)_{N-K+1}}\, \Res_{t=-N+K-j-m}H^*(t;\nu).
\end{equation}

$\bullet$ If $k=0$, then
\begin{equation}\label{f3'}
\begin{aligned}
A_N(i,j)=1&-\sum\limits_{m\ge 1}\dfrac1{m+N-K}\Res_{t=-j+m}H^*(t;\nu) \\
&+\sum\limits_{m\ge 1}\dfrac1{m+N-K} \Res_{t=-N+K-j-m}H^*(t;\nu).
\end{aligned}
\end{equation}

\end{theorem}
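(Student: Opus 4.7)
The proof I propose is a direct combination of the two preceding propositions, with only bookkeeping of parameters needed.

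The plan is as follows. By Proposition \ref{propC.1}, we already have the determinantal expression
\begin{equation*}
\frac{\Dim_{K,N}(\ka,\nu)}{\Dim_N\nu}
=\det\left[\left(H^*(\,\cdot\,;\nu):f_{\LL(N,j),\,\ka_i-i+j}\right)\right]_{i,j=1}^K,
\end{equation*}
with $\LL(N,j)=\{-N+K-j,\dots,-j\}$. So to prove Theorem \ref{thmD.1} it suffices, for each column index $j$, to evaluate the coefficient $(H^*(\,\cdot\,;\nu):f_{\LL(N,j),\,k})$ for $k=\ka_i-i+j$ by applying Proposition \ref{propD.1}.

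The substitution is purely mechanical. First I identify the parameters $a,b,n$ of Proposition \ref{propD.1} in the present situation: $a=-N+K-j$, $b=-j$, hence $n=b-a+1=N-K+1$ and $n-1=N-K$. Therefore the three cases of Proposition \ref{propD.1} give, with $k=k(i,j)=\ka_i-i+j$, residue expansions based at the points $b+m=-j+m$ and $a-m=-N+K-j-m$, exactly matching the residue locations appearing in \eqref{f1'}, \eqref{f2'}, and \eqref{f3'}. For the $k=0$ case I also need to note that $H^*(\infty;\nu)=1$ (which is obvious from the factored form of $H^*$), giving the leading $1$ in \eqref{f3'}.

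Next I should verify that $H^*(\,\cdot\,;\nu)$ really lies in $V_{\LL(N,j)}$, so that Proposition \ref{propD.1} applies. Since $H^*(t;\nu)$ has simple poles only at the $N$ points $t=\nu_i-i$ for $i=1,\dots,N$, and is regular at infinity with value $1$, it lies in $V_{\LL(N)}=V_{\{-N,\dots,-1\}}$; as $\LL(N,j)\subset\LL(N)$, the inclusion $V_{\LL(N)}\subseteq V_{\LL(N,j)}$ holds and the coefficients are well defined.

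The work is essentially finished at this point: plug the formulas of Proposition \ref{propD.1} into the determinant of Proposition \ref{propC.1}, and the three cases $k\ge 1$, $k\le -1$, $k=0$ produce exactly the three formulas \eqref{f1'}--\eqref{f3'} for $A_N(i,j)$. There is no real obstacle here since both ingredients are already established; the only thing that requires care is keeping the signs, shifts, and Pochhammer indices straight across the substitutions $a\mapsto -N+K-j$, $b\mapsto -j$, and recognizing that the constant $(n-1)=N-K$ and the normalizer $1/(m+n-1)=1/(m+N-K)$ in Proposition \ref{propD.1} translate correctly into the prefactors of Theorem \ref{thmD.1}.
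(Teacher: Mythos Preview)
Your proposal is correct and essentially identical to the paper's own proof: both invoke Proposition~\ref{propC.1} for the determinantal form and then apply Proposition~\ref{propD.1} with the substitutions $a=-N+K-j$, $b=-j$, $n=N-K+1$ to evaluate each entry. Your additional remarks (that $H^*(\infty;\nu)=1$ and that $H^*(\,\cdot\,;\nu)\in V_{\LL(N,j)}$) merely make explicit what the paper leaves implicit.
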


\begin{proof}
By virtue of Proposition \ref{propC.1},  formula \eqref{eqD.6} holds with the
$K\times K$ matrix $A_N=[A_N(i,j)]$ defined by
$$
A_N(i,j)=\left(H^*(\,\cdot\,;\nu):f_{\LL(N,j), \, \ka_i-i+j}\right), \quad
i,j=1,\dots,K,
$$
where
$$
\LL(N,j):=\{-N+K-j,\dots,-j\}\subseteq \LL(N):=\{-N,\dots,-1\}\subset\Z.
$$
To compute the entry $A_N(i,j)$ we apply Proposition \ref{propD.1}, where we
substitute $G(t)=H^*(t;\nu)$ and
$$
\LL=\LL(N,j), \quad a=-N+K-j, \quad b=-j, \quad n=N-K+1.
$$
This leads to the desired formulas.
\end{proof}

\begin{proposition}\label{propD.2}
Assume that $N$ is large enough, where the necessary lower bound depends on
$\ka$. Then the formulas of Theorem \ref{thmD.1} can be rewritten in the
following equivalent form{\rm:}

\medskip

$\bullet$ If $k\ge1$, then
\begin{equation}\label{f1}
A_N(i,j)=(N-K)\sum\limits_{\ell=0}^\infty
\dfrac{(\ell+j-k+1)_{k-1}}{(\ell+j-k+N-K)_{k+1}}\, \Res_{t=\ell} H^*(t;\nu).
\end{equation}

$\bullet$ If $k\le-1$, then
\begin{equation}\label{f2}
A_N(i,j)= -(N-K)\sum\limits_{\ell=-\infty}^{-N-1} \dfrac{(\ell+j
+N-K+1)_{|k|-1}}{(\ell+j)_{|k|+1}}\, \Res_{t=\ell} H^*(t;\nu).
\end{equation}

$\bullet$ If $k=0$, then
\begin{equation}\label{f3}
\begin{aligned}
A_N(i,j)=1&-\sum\limits_{\ell=0}^\infty\dfrac1{\ell+j+N-K}\Res_{t=\ell}H^*(t;\nu)\\
&-\sum\limits_{\ell=-\infty}^{-N-1}\dfrac1{-\ell-j} \Res_{t=\ell}H^*(t;\nu).
\end{aligned}
\end{equation}

\end{proposition}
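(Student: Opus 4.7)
The plan is to obtain each formula of Proposition~\ref{propD.2} from the corresponding formula of Theorem~\ref{thmD.1} by a change of summation variable, and then to show that the shift in the resulting range of summation contributes nothing. In~\eqref{f1'} I would substitute $\ell = -j+m$, in~\eqref{f2'} substitute $\ell = -N+K-j-m$, and in~\eqref{f3'} apply both substitutions to the two sums respectively. After this change each sum is indexed by $\ell$, with the residue of $H^*$ evaluated at $t=\ell$, and two things remain to verify: (i) the weight multiplying $\Res_{t=\ell}H^*(t;\nu)$ agrees, as a rational function of $\ell$, with the weight appearing in Proposition~\ref{propD.2}; and (ii) the discrepancy between the two summation ranges contributes zero.

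For (i), writing both candidate weights as ratios of $\Gamma$-functions reduces the required identity for $k\ge 1$ to
$$
\frac{(\ell+j-k+1)_{N-K-1}}{(\ell+j)_{N-K+1}} \;=\; \frac{(\ell+j-k+1)_{k-1}}{(\ell+j-k+N-K)_{k+1}},
$$
which is immediate since both sides equal $\Gamma(\ell+j)\,\Gamma(\ell+j+N-K-k)/[\Gamma(\ell+j-k+1)\,\Gamma(\ell+j+N-K+1)]$. For $k\le -1$, applying the reflection rule $(x)_n = (-1)^n(-x-n+1)_n$ separately to numerator and denominator of the substituted weight produces a compensating factor $(-1)^{(N-K-1)-(N-K+1)}=1$ together with an analogous $\Gamma$-identity matching the weight of~\eqref{f2}. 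The $k=0$ case is a one-line check.

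For (ii), the key observation is that $H^*(t;\nu)$ has no poles in the integer range $[-N,-1]$: a would-be simple pole at $t = \nu_i - i$ in this range would force $\nu_i \in [i-N,i-1]$, hence $i-\nu_i \in [1,N]$, and the pole is cancelled exactly by the numerator factor $t+(i-\nu_i)$. Consequently $\Res_{t=\ell}H^*(t;\nu)=0$ for every $\ell\in[-N,-1]$. Taking $k\ge 1$ as the model: the rewritten~\eqref{f1'} sums over $\ell\ge k-j$, whereas~\eqref{f1} sums over $\ell\ge 0$, so the symmetric difference is either $[0,k-j-1]$ (when $k>j$) or $[k-j,-1]$ (when $k\le j$). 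On the first interval the Pochhammer numerator $(\ell+j-k+1)_{k-1}$ contains the vanishing factor $\ell+j-k+(k-j-\ell)=0$, killing the summand. On the second interval the residue itself vanishes, provided $k-j=\ka_i-i\ge -N$ for every $i$, i.e.\ $N\ge\max_i(i-\ka_i)$; this is the explicit meaning of ``$N$ large enough (depending on $\ka$)''. The cases $k\le -1$ and $k=0$ follow by the same dichotomy, with the Pochhammer factor vanishing on one side of the symmetric difference and the residue vanishing on the other. The main obstacle I anticipate is organizational bookkeeping: as $\ka_i-i$ varies in sign across $i$ and $j$, one must correctly identify which of the two summation ranges is strictly larger and apply the right vanishing argument on the extras; the reflection sign in the $k\le -1$ case also requires care. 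None of the individual steps is conceptually deep.
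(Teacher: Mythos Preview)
Your proposal is correct and matches the paper's own proof in substance: the paper likewise (Step~2) rewrites the weight via the Gamma identity you quote, (Step~1) kills the extra Pochhammer terms on one side, and (Step~3) uses that $H^*(\,\cdot\,;\nu)$ has no poles in $\{-N,\dots,-1\}$ to kill the extras on the other side, then substitutes $m=\ell+j$. The only cosmetic difference is that the paper routes through the intermediate range $m\ge 1$ (so it always applies both vanishing mechanisms), whereas you compare $\ell\ge k-j$ directly with $\ell\ge 0$ and invoke whichever mechanism the sign of $k-j$ dictates; one small point you should make explicit is that in the $k>j$ branch the denominator $(\ell+j-k+N-K)_{k+1}$ is nonzero for $\ell\ge 0$ once $N$ is large, which is precisely where the paper also invokes the largeness of $N$.
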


\begin{proof}
Examine formula \eqref{f1'}. Its transformation to \eqref{f1} involves three
steps.

{\it Step\/} 1. The key observation is that the summation in \eqref{f1'} can be
formally extended by starting it from $m=1$. The reason is that the extra terms
with $1\le m<k$ actually vanish. Indeed, the vanishing comes from the product
$$
(m-k+1)_{N-K-1}=(m-k+1)\dots(m-k+N-K-1).
$$
Since $1\le m<k$, the first factor of the product is $\le0$ while the last
factor is positive (here the assumption that $N$ is large enough is
essential!). Therefore, one of the factors is 0.

{\it Step\/} 2. A simple transformation shows that
$$
\frac{(m-k+1)_{N-K-1}}{(m)_{N-K+1}}=\frac{\Gamma(m-k+N-K)\Gamma(m)}{\Gamma(m-k+1)\Gamma(m+N-K+1)}=\frac{(m-k+1)_{k-1}}{(m-k+N-K)_{k+1}}.
$$

{\it Step\/} 3. Observe that the possible poles of $H^*(t;\nu)$ are located in
$$
\Z\setminus\LL=\{\dots, -N-3, -N-2,-N-1\}\cup\{0,1,2,\dots\}.
$$
All possible poles at points $t=-j+m$, where $m=1,2,\dots$, are entirely
contained in $\{0,1,2,\dots\}$. Therefore, we may assume that $m$ ranges over
$\{j,j+1,j+2,\dots\}$. Setting $m=j+\ell$ we finally arrive at \eqref{f1}.

\medskip

To transform \eqref{f2'} to \eqref{f2} we apply the similar argument.

To transform the sums in \eqref{f3'} we need to apply only the last step of the
above argument.
\end{proof}

\section{Contour integral representation}\label{E}

We keep the notation of the preceding section: The number $K=1,2,\dots$ and
the signature $\ka\in\GT_K$ are fixed, and we are dealing with the $K\times K$
matrix $[A_N(i,j)]$ that depends on $\ka\in\GT_K$ and $\nu\in\GT_N$, and is
defined by the formulas of Proposition \ref{propD.2}. We denote by $\T$ the
unit circle $|u|=1$ in $\C$ oriented counterclockwise.

\begin{proposition}\label{propE.1}
Every entry $A_N(i,j)$ can be written in the form
\begin{equation}\label{eqE.1}
A_N(i,j)=\frac1{2\pi i}\oint\limits_{\T}\Phi(u;\om(\nu))R_{\ka_i-i+j}^{(j)}
(u;N)\frac{du}u,
\end{equation}
where, for any $k\in\Z$, $j=1,\dots,K$, and natural $N>K$, the function $u\to
R_k^{(j)}(u;N)$ is continuous on\/ $\T$ and such that
\begin{equation}\label{eqE.2}
\lim_{N\to\infty} R_k^{(j)}(u;N)=\frac1{u^k}
\end{equation}
uniformly on $u\in\T$.

The explicit expression for $R_k^{(j)}(u;N)$ is the following\,{\rm:}

$\bullet$ If $k\ge1$, then
\begin{equation}\label{eqE.5}
R_k^{(j)}(u;N)=\frac{N-K}{N}\,\dfrac{u\prod\limits_{m=1}^{k-1}
\left(1+\dfrac{(j-k-\frac12+m)(u-1)}N\right)}
{\prod\limits_{m=1}^{k+1}\left(u+\dfrac{(j-k-K-\frac32+m)(u-1)}N\right)}.
\end{equation}

$\bullet$ If $k\le-1$, then
\begin{equation}\label{eqE.10}
R_k^{(j)}(u;N)=\frac{N-K}{N}\,\dfrac{u\prod\limits_{m=1}^{|k|-1}
\left(u+\dfrac{(j-K-\frac12+m)(u-1)}N\right)}
{\prod\limits_{m=1}^{|k|+1}\left(1+\dfrac{(j-\frac32+m)(u-1)}N\right)}.
\end{equation}

$\bullet$ If $k=0$, then
\begin{equation}\label{eqE.11}
R_0^{(j)}(u;N)=\frac{N-K}{N}\,\dfrac{u}
{\left(u+\dfrac{(j-K-\frac12)(u-1)}N\right)\left(1+\dfrac{(j-\frac12)(u-1)}N\right)}.
\end{equation}
\end{proposition}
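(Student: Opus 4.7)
\medskip

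The plan is to reverse-engineer the contour integral representation from the residue sums in Proposition~\ref{propD.2}, using the change of variables from Proposition~\ref{propB.2}. Recall that substitution $t=-\tfrac12+N/(u-1)$, inverse to $u=1+N/(t+\tfrac12)$, sends the unit circle $\T$ (oriented counterclockwise) to the vertical line $\Gamma_N:=\{\Re t=-(N+1)/2\}$ oriented from $-i\infty$ to $+i\infty$, while making $\Phi(u;\om(\nu))$ coincide with $H^*(t;\nu)$. A direct calculation gives $du/u=-N\,dt/((t+\tfrac12)(t+N+\tfrac12))$ and, upon expressing each factor of \eqref{eqE.5}--\eqref{eqE.11} through $1+c(u-1)/N=(t+\tfrac12+c)/(t+\tfrac12)$ and $u+c(u-1)/N=(t+\tfrac12+N+c)/(t+\tfrac12)$, the troublesome denominators $(t+\tfrac12)^{\pm\bullet}$ telescope against $du/u$. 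For $k\ge1$ the outcome is the clean identity
\begin{equation*}
R^{(j)}_k(u;N)\,\frac{du}{u}=-(N-K)\,\frac{(t+j-k+1)_{k-1}}{(t+j-k+N-K)_{k+1}}\,dt,
\end{equation*}
an $O(1/t^2)$ rational $1$-form, and analogous formulas emerge for $k\le-1$ and $k=0$.

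With this $1$-form in hand I would close the $\Gamma_N$ contour by a large semicircle on the right (for $k\ge 1$) or on the left (for $k\le -1$); the $O(1/t^2)$ decay legitimizes the closure. For $k\ge1$ the rational factor has all its poles in the left half-plane once $N$ exceeds a bound depending only on $K$ and $|k|$, so the only singularities of the integrand in the right half-plane are the simple poles of $H^*(t;\nu)$ at integers $\ell\ge 0$, and the clockwise orientation produces exactly
\begin{equation*}
\frac{1}{2\pi i}\oint_\T \Phi(u;\om(\nu))R^{(j)}_k(u;N)\,\frac{du}{u}=(N-K)\sum_{\ell\ge 0}\frac{(\ell+j-k+1)_{k-1}}{(\ell+j-k+N-K)_{k+1}}\Res_{t=\ell}H^*(t;\nu),
\end{equation*}
which is \eqref{f1}. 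The case $k\le -1$ is symmetric, using closure on the left and matching \eqref{f2}. The case $k=0$ is the subtlest: the $1$-form picks up the extra pole $t=-j$ on the right (and $t=-N+K-j$ on the left), and one must use the partial fraction $\tfrac{N-K}{(t+j)(t+N-K+j)}=\tfrac{1}{t+j}-\tfrac{1}{t+N-K+j}$ together with the partial fraction expansion $H^*(t;\nu)=1+\sum_{t_m}\Res_{t=t_m}H^*/(t-t_m)$ evaluated at $t=-j$ to cancel the unwanted $\sum_{\ell\ge 0}\Res/(\ell+j)$ terms and reproduce \eqref{f3}.

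The asymptotic \eqref{eqE.2} is then immediate from the product formulas: each factor $1+c(u-1)/N$ and $u+c(u-1)/N$ tends to $1$ and $u$ respectively, uniformly on $\T$, and $(N-K)/N\to 1$, yielding $u\cdot 1^{k-1}/u^{k+1}=u^{-k}$ for $k\ge 1$ and analogously for the other cases. Continuity on $\T$ reduces to checking that no denominator factor $u+c(u-1)/N$ or $1+c(u-1)/N$ vanishes on $\T$; the equation $u=c/(c+N)$ forces $|c|=|c+N|$, i.e.\ $c=-N/2$, and the ranges of $c$ appearing in \eqref{eqE.5}--\eqref{eqE.11} show (since $|c|$ is bounded in terms of $K$ and $|k|$ while $N>K$) that $c=-N/2$ never occurs. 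I expect the main obstacle to be the $k=0$ case, where one has to account for the ``extra'' poles of the rational $1$-form together with a telescoping via the partial fraction identity and the value of $H^*$ at $-j$, which requires the partial-fraction normalization $H^*(\infty)=1$ in an essential way.
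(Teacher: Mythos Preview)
Your argument is correct and, for $k\ne0$, is the paper's proof run in reverse: the paper writes \eqref{f1} as a $t$-contour integral around the right-half-plane poles (enlarged to enclose $t=-\tfrac12$, i.e.\ $u=\infty$), then changes variables and deforms to $\T$; you start on $\T$, push to the line $\Gamma_N$, and close with a semicircle. The computations coincide.

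For $k=0$ the two routes genuinely differ. The paper represents the two residue sums in \eqref{f3} as integrals over circles $C_\pm$ just outside and inside $\T$, each carrying a $1/(u-1)$ singularity, and then subtracts the identity
\[
1-\frac1{2\pi i}\oint_{C_+}\frac{\Phi(u;\om(\nu))\,du}{(u-1)(1+\epsi_2(u-1))}
+\frac1{2\pi i}\oint_{C_-}\frac{\Phi(u;\om(\nu))\,du}{(u-1)(1+\epsi_2(u-1))}=0
\]
(which holds because $\Phi(1;\om(\nu))=1$) to cancel the $(u-1)$ and collapse onto a single $\T$-integral. Your plan instead closes $\Gamma_N$ on one side, accepts the auxiliary pole at $t=-j$, and removes the resulting $H^*(-j;\nu)$ via the partial-fraction expansion $H^*(t;\nu)=1+\sum_\ell\Res_{t=\ell}H^*/(t-\ell)$ evaluated at $t=-j$; this is the $t$-side avatar of the same normalization $H^*(\infty)=\Phi(1)=1$. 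The paper's subtraction is slicker, while your version keeps all three cases in a uniform ``close and collect residues'' framework.

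One small correction to your continuity check: the conclusion ``$c=-N/2$ never occurs because $|c|$ is bounded while $N>K$'' only holds for $N$ large relative to $K$ and $|k|$, not for every $N>K$ (e.g.\ $K=j=k=1$, $N=3$ gives a denominator factor $(u+1)/2$ in \eqref{eqE.5}, vanishing at $u=-1\in\T$). This is harmless for the application, since Proposition~\ref{propD.2} already assumes $N$ large; just phrase the continuity accordingly. Also, the factors $1+c(u-1)/N$ vanish on $\T$ when $c=N/2$ (not $-N/2$), so both sign cases need the ``$N$ large'' caveat.
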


\begin{proof}
Recall (see Proposition \ref{propB.2}) that
$$
H^*(t;\nu)=\Phi(u;\om(\nu)),
$$
where $t$ and $u$ are related by the mutually inverse linear-fractional
transformations
$$
t=-\frac12+\frac{N}{u-1}, \qquad u=1+\frac{N}{t+\frac12}.
$$
The transformation $t\to u$ maps the right half-plane $\Re\, t>-\frac{N+1}2$
onto the exterior of the unit circle $|u|=1$, and the left half-plane $\Re\,
t<-\frac{N+1}2$ is mapped onto the interior of the circle. The vertical line
$\Re\,t=-\frac{N+1}2$ just passes through the midpoint of the interval
$[-N,-1]$, which is free of the poles of $H^*(t;\nu)$. Note also that
\begin{equation}\label{eqE.3}
dt=-\frac N{(u-1)^2}du=-\frac{Nu}{(u-1)^2}\,\frac{du}u.
\end{equation}

Consider separately the three cases depending on whether $k\ge1$, $k\le -1$ or
$k=0$.

{\it Case\/}  $k\ge1$. We can write \eqref{f1} as the contour integral
$$
A(i,j)=\frac1{2\pi i}\oint\limits_C \rho(t)H^*(t;\nu)dt,
$$
where $C$ is a simple contour in the half-plane $\Re t>-\frac{N+1}2$, oriented
in the positive direction and encircling all the poles of $H^*(t;\nu)$ located
in this half-plane and
$$
\rho(t)=(N-K)\frac{(t+j-k+1)_{k-1}}{(t+j-k+N-K)_{k+1}}.
$$
Passing to variable $u$ we get, after a simple transformation,
$$
\rho(t)=\wt\rho(u):=\frac{(N-K)(u-1)^2}{N^2}\, \dfrac{\prod\limits_{m=1}^{k-1}
\left(1+\dfrac{(j-k-\frac12+m)(u-1)}N\right)}
{\prod\limits_{m=1}^{k+1}\left(u+\dfrac{(j-k-K-\frac32+m)(u-1)}N\right)}.
$$

Without loss of generality we can assume that contour $C$ also encircles the
special point $t=-\frac12$ corresponding to $u=\infty$. This means that its
image in the $u$-plane goes around the unit circle $|u|=1$ in the negative
direction. Thus, we can deform it, in the $u$-plane, to the unit circle. The
change of orientation to the positive one produces the minus sign, which
cancels the minus sign in formula \eqref{eqE.3} for the transformation of the
differential. Note that the deformation of the contour is justified, because
$\wt\rho(u)$ has no singularity in the exterior of the unit circle  (this is
best  seen from the expression for $\rho(t)$, which obviously has no
singularity in the half-plane $\Re\,t>-\frac{N+1}2$). As for the factor
$(u-1)^2$ in the denominator of \eqref{eqE.3}, it is cancelled by the same
factor in the numerator of $\wt\rho(u)$. Finally we get the desired integral
representation \eqref{eqE.1} with $R^{(j)}_k(u;N)$ given by \eqref{eqE.5}.

{\it Case\/}  $k\le-1$. This case is analyzed in the same way.

{\it Case\/}  $k=0$. The same argument as above allows one to write the
expression in \eqref{f3} as
\begin{equation}\label{eqE.4}
1-\frac1{2\pi i} \oint\limits_{C_+}
\frac{\Phi(u;\om(\nu))du}{(u-1)(u+\epsi_1(u-1))} +\frac1{2\pi i}
\oint\limits_{C_-} \frac{\Phi(u;\om(\nu))du}{(u-1)(1+\epsi_2(u-1))}
\end{equation}
where
$$
\epsi_1=\frac{j-K-\tfrac12}N, \qquad \epsi_2=\frac{j-\tfrac12}N,
$$
and $C_+$ and $C_-$ are two circles close to the unit circle $|u|=1$, both
oriented in the positive direction, and such that $C_+$ is outside the unit
circle while $C_-$ is inside it. Since $\Phi(u;\om(\nu))$ takes value 1 at
$u=1$, we have
$$
1-\frac1{2\pi i} \oint\limits_{C_+}
\frac{\Phi(u;\om(\nu))du}{(u-1)(1+\epsi_2(u-1))} +\frac1{2\pi i}
\oint\limits_{C_-} \frac{\Phi(u;\om(\nu))du}{(u-1)(1+\epsi_2(u-1))}=0.
$$
Subtracting this from \eqref{eqE.4} we get the contour integral with the
integrand equal to $\Phi(u;\om(\nu))du$ multiplied by
\begin{gather*}
\frac1{(u-1)(1+\epsi_2(u-1))}-\frac1{(u-1)(u+\epsi_1(u-1))}\\
=\frac{1+\epsi_1-\epsi_2}{(u+\epsi_1(u-1))(1+\epsi_2(u-1))}.
\end{gather*}
This leads to \eqref{eqE.11}.

The asymptotics \eqref{eqE.2} is obvious from the explicit expressions
\eqref{eqE.5}, \eqref{eqE.10}, and \eqref{eqE.11}.
\end{proof}

Let $\T^K=\T\times\dots\times\T$ be the $K$-fold product of unit circles.
Theorem \ref{thmD.1}, Proposition \ref{propD.2}, and Proposition \ref{propE.1}
together imply the following result.

\begin{theorem}\label{thmE.1}
Given $K=1,2,\dots$ and $\ka\in\GT_K$, one can exhibit a sequence
$\{R_\ka(u_1,\dots,u_K;N): N>K\}$ of continuous functions on the torus $\T^K$
such that\/{\rm:}

{\rm(i)} For all $N$ large enough and every $\nu\in\GT_N$
\begin{multline}\label{eqE.7}
\frac{\Dim_{K,N}(\ka,\nu)}{\Dim_N\nu} =\frac1{(2\pi
i)^K}\oint_\T\dots\oint_\T\Phi(u_1;\om(\nu))\dots\Phi(u_K;\om(\nu))\\
\times R_\ka(u_1,\dots,u_K;N)\frac{du_1}{u_1}\dots \frac{du_K}{u_K},
\end{multline}
where each copy of\/ $\T$ is oriented counterclockwise.

{\rm(ii)} As $N$ goes to infinity,
$$
R_\ka(u_1,\dots,u_K;N)\to\det\left[u_j^{-(\ka_i-i+j)}\right]_{i,j=1}^K
$$
uniformly on $(u_1,\dots,u_K)\in\T^K$.
\end{theorem}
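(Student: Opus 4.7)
The plan is to assemble Theorem \ref{thmE.1} directly by combining Theorem \ref{thmD.1} (which expresses the relative dimension as a $K\times K$ determinant with entries $A_N(i,j)$) with Proposition \ref{propE.1} (which represents each $A_N(i,j)$ as a single contour integral over $\T$). Concretely, I expect
$$
R_\ka(u_1,\dots,u_K;N):=\det\bigl[R^{(j)}_{\ka_i-i+j}(u_j;N)\bigr]_{i,j=1}^K,
$$
so that $R_\ka$ is manifestly a continuous function on $\T^K$ (being a polynomial in continuous functions of the individual variables).

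First I would expand $\det[A_N(i,j)]_{i,j=1}^K$ as $\sum_{\sigma\in S_K}\sgn(\sigma)\prod_{j=1}^K A_N(\sigma(j),j)$ and substitute the single-integral representation of each factor supplied by Proposition \ref{propE.1}, using an independent integration variable $u_j$ for column $j$. This converts the product of $K$ one-dimensional integrals into a single $K$-fold integral on $\T^K$. Crucially, the only $\sigma$-dependence inside the product appears through the index $\ka_{\sigma(j)}-\sigma(j)+j$ of $R^{(j)}$, while the factors $\Phi(u_j;\om(\nu))$ are independent of $\sigma$ and symmetric in the $u_j$'s. Pulling the sum over $\sigma$ inside the integral therefore recombines into the single determinant above, yielding (i):
\begin{equation*}
\frac{\Dim_{K,N}(\ka,\nu)}{\Dim_N\nu}
=\frac{1}{(2\pi i)^K}\oint_\T\!\cdots\!\oint_\T
\prod_{j=1}^K\Phi(u_j;\om(\nu))\cdot R_\ka(u_1,\dots,u_K;N)\prod_{j=1}^K\frac{du_j}{u_j}.
\end{equation*}
The condition ``for all $N$ large enough'' is simply inherited from Proposition \ref{propD.2}, whose reformulation underlies the contour integral expressions in Proposition \ref{propE.1}.

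For part (ii), I would invoke the pointwise-uniform convergence $R^{(j)}_k(u;N)\to u^{-k}$ on $\T$ established in Proposition \ref{propE.1}. Since the torus $\T^K$ is compact and the map from matrix entries to the determinant is a polynomial (hence uniformly continuous on bounded sets), entrywise uniform convergence of the $K^2$ scalar-valued functions $R^{(j)}_{\ka_i-i+j}(u_j;N)$ to $u_j^{-(\ka_i-i+j)}$ propagates to uniform convergence of the determinant. Thus
$$
R_\ka(u_1,\dots,u_K;N)\ \longrightarrow\ \det\!\bigl[u_j^{-(\ka_i-i+j)}\bigr]_{i,j=1}^K
\qquad\text{uniformly on }\T^K.
$$

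There is essentially no deep obstacle; all the analytic work was absorbed into Propositions \ref{propC.1}, \ref{propD.1}, and \ref{propE.1}. The only point that deserves care is ensuring that the interchange of the sum over $\sigma$ with the $K$ contour integrations is legitimate—but since the sum is finite and each integrand is continuous on the compact contour $\T$, Fubini applies trivially. The bookkeeping of indices (making sure that in the expanded determinant the variable $u_j$ remains attached to column $j$ while only the row index $i$ is permuted by $\sigma$) is the one spot where I would slow down to check signs and conventions.
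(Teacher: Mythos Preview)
Your proposal is correct and matches the paper's proof essentially line for line: the paper also sets $R_\ka(u_1,\dots,u_K;N)=\det\bigl[R^{(j)}_{\ka_i-i+j}(u_j;N)\bigr]$, expands the determinant, applies the single-variable integral representation from Proposition \ref{propE.1} columnwise to recover $\det[A_N(i,j)]$, and deduces (ii) directly from \eqref{eqE.2}. Your additional remarks about Fubini and uniform continuity of the determinant map are correct and make explicit what the paper leaves implicit.
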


\begin{proof}
(i) Indeed, set
\begin{equation}\label{eqE.6}
R_\ka(u_1,\dots,u_K;N)=\det\left[R_{\ka_i-i+j}^{(j)}(u_j;N)\right]_{i,j=1}^K,
\end{equation}
where the functions $R^{(j)}_{\ka_i-i+j}(u,N)$ are defined in Proposition
\ref{propE.1}. Recall that Theorem \ref{thmD.1} expresses the relative
dimension $\Dim_{K,N}(\ka,\nu)/\Dim_N\nu$ as the determinant of a matrix
$A_N=[A_N(i,j)]$; Proposition \ref{propD.2} provides a more convenient
expression for the matrix entries that works for large $N$; and finally
Proposition \ref{propE.1} says that this expression can be written as a contour
integral involving the functions $R^{(j)}_{\ka_i-i+j}(u,N)$. Now we plug in the
determinant \eqref{eqE.6} into the $K$-fold contour integral \eqref{eqE.7} and
expand the determinant on columns. Applying \eqref{eqE.1} we get
$\det[A_N(i,j)]$, as desired.

(ii) This follows directly from \eqref{eqE.6} and \eqref{eqE.2}.
\end{proof}

\begin{remark}\label{remE.1}
The graph $\GT$ possesses the {\it reflection symmetry\/} $\nu\mapsto\wh\nu$,
where, given a signature $\nu\in\GT_N$, $N=1,2,\dots$, we set
$$
\wh\nu=(\wh\nu_1,\dots,\wh\nu_N):=(-\nu_N,\dots,-\nu_1).
$$
The corresponding symmetry $\om\mapsto\wh\om$ of $\Om$ amounts to switching the
plus- and minus-coordinates:
$$
\al^+_i\leftrightarrow\al^-_i, \qquad \be^+_i\leftrightarrow\be^-_i, \qquad
\de^+\leftrightarrow\de^-.
$$
Note also that $\wh{\om(\nu)}=\om(\wh\nu)$ and
$$
\Phi(u;\om)=\Phi(u^{-1};\wh\om).
$$

Evidently, the reflection symmetry preserves the relative dimension:
$$
\frac{\Dim_{K,N}(\ka,\nu)}{\Dim_N\nu}=\frac{\Dim_{K,N}(\wh\ka,\wh\nu)}{\Dim_N\wh\nu}.
$$
Therefore, the expression given in \eqref{eqE.7} must satisfy this identity.
This is indeed true and can be readily verified using the relation
$$
R^{(j)}_k(u;N)=R^{(K+1-j)}_{-k}(u^{-1};N),
$$
which follows directly from \eqref{eqE.5}, \eqref{eqE.10}, and \eqref{eqE.11}.
\end{remark}

\smallskip

The Uniform Approximation Theorem (Theorem \ref{thmG.1}) is a direct
consequence of Theorem \ref{thmE.1}:

\begin{proof}[Proof of the Uniform Approximation Theorem]
As was already pointed out in the end of Section \ref{G}, both quantities
$\La^N_K(\nu,\ka)$ and $\La^\infty_K(\om,\ka)$ entering \eqref{eqG.5} involve
one and the same constant factor $\Dim_K\ka$. Therefore, \eqref{eqG.5} is
equivalent to
\begin{equation}\label{eqE.8}
\lim_{N\to\infty}\sup_{\nu\in\GT_N}\left|\frac{\Dim_{K,N}(\ka,\nu)}
{\Dim_N\nu}-\varphi_\ka(\om(\nu))\right|=0.
\end{equation}

To estimate the deviation
\begin{equation}\label{eqE.9}
\frac{\Dim_{K,N}(\ka,\nu)} {\Dim_N\nu}-\varphi_\ka(\om(\nu))
\end{equation}
we observe that both quantities in \eqref{eqE.9} can be written as $K$-fold
contour integrals of the same type. Indeed, for the relative dimension we apply
\eqref{eqE.7}. Next, by the very definition
$\varphi_\ka(\om)=\det[\varphi_{\ka_i-i+j}(\om)]$ and
$$
\varphi_k(\om)=\frac1{2\pi i}\oint_\T\Phi(u;\om)\frac1{u^k}\,\frac{du}u,
$$
so that $\varphi_\ka(\om(\nu))$ admits a similar integral representation, only
$R(u_1,\dots,u_K;N)$ has to be replaced by
$$
\det\left[u_j^{-(\ka_i-i+j)}\right]_{i,j=1}^K.
$$

It follows that for any $\nu\in\GT_N$ the modulus of \eqref{eqE.9} is bounded
from above by the following integral over the torus $\T^K$ taken with respect
to the normalized Lebesgue measure $m(du)$, where we abbreviate
$u=(u_1,\dots,u_K)$:
$$
\int_{\T^K}|\Phi(u_1;\om(\nu))\cdots\Phi(u_K;\om(\nu))|
\left|R_\ka(u_1,\dots,u_K;N)-\det\left[u_j^{-(\ka_i-i+j)}\right]_{i,j=1}^K\right|
m(du).
$$

By Proposition \ref{propF.6},
$$
|\Phi(u_1;\om(\nu))\cdots\Phi(u_K;\om(\nu))|\le1.
$$
Therefore, the above integral does not exceed
$$
\int_{\T^K}\left|R_\ka(u_1,\dots,u_K;N)-\det\left[u_j^{-(\ka_i-i+j)}\right]_{i,j=1}^K\right|
m(du)
$$
and the desired uniform bound follows from the second assertion of Theorem
\ref{thmE.1}.
\end{proof}

\section{Appendix}\label{I}

Let $\{\nu(N)\in\GT_N:N=1,2,\dots\}$ be a sequence of signatures of growing
length. We say that it is {\it regular\/} if for any fixed $K=1,2,\dots$ the
sequence of probability measures $\La^N_K(\nu(N),\,\cdot\,)$ weakly converges
to a probability measure on $\GT_K$. (This means that for every $\ka\in\GT_K$
there exists a limit $\lim_{N\to\infty}\La^N(\nu(N),\ka)$ and the sum over
$\ka\in\GT_K$ of the limit values equals $1$.) This definition is equivalent to
regularity of the sequence of normalized characters $\wt\chi_{\nu(N)}$ as
defined in \cite{OO98}.

A particular case of the results of \cite{OO98} is the following theorem.

\begin{theorem}\label{thmI.1}
A sequence $\{\nu(N)\in\GT_N:N=1,2,\dots\}$ is regular if and only if the
corresponding sequence $\{\om(\nu(N))$ of points in $\Om$ converges to a point
$\om\in\Om$.

Moreover, if $\{\nu(N)\in\GT_N:N=1,2,\dots\}$ is regular, then the the limit
measure $\lim_{N\to\infty}\La^N_K(\nu(N),\,\cdot\,)$ coincides with
$\La^\infty_K(\om,\,\cdot\,)$, where $\om=\lim_{N\to\infty}\om(\nu(N))$.
\end{theorem}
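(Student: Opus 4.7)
The plan is to derive Theorem \ref{thmI.1} as a direct consequence of the Uniform Approximation Theorem (Theorem \ref{thmG.1}) together with three facts already established in the excerpt: (i) for each fixed $K$ and $\ka\in\GT_K$ the function $\om\mapsto\La^\infty_K(\om,\ka)$ is continuous on $\Om$ and vanishes at infinity (Corollary \ref{corF.3}); (ii) for each $\om\in\Om$ the row $\La^\infty_K(\om,\,\cdot\,)$ is a probability measure on $\GT_K$ (Proposition \ref{propF.2}(i)); and (iii) the map $\om\mapsto\Phi(\,\cdot\,;\om)$ is injective (Proposition \ref{propF.3}), which via the expansion \eqref{eqF.5} implies that the family $\{\varphi_\nu\}_{\nu\in\GT}$ already separates points of $\Om$.

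For the ``if'' direction I would suppose $\om(\nu(N))\to\om$ in $\Om$. Fixing $K$ and $\ka\in\GT_K$, Theorem \ref{thmG.1} gives $\La^N_K(\nu(N),\ka)=\La^\infty_K(\om(\nu(N)),\ka)+o(1)$, and continuity in (i) gives $\La^\infty_K(\om(\nu(N)),\ka)\to\La^\infty_K(\om,\ka)$. Hence $\La^N_K(\nu(N),\ka)\to\La^\infty_K(\om,\ka)$ for every $\ka$, and by (ii) the pointwise limits sum to $1$, so the sequence is regular with the claimed limit $\La^\infty_K(\om,\,\cdot\,)$.

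For the ``only if'' direction I would first rule out escape to infinity: if some subsequence $\om(\nu(N_k))$ tended to $\infty$ in the one-point compactification of $\Om$, then (i) would give $\La^\infty_K(\om(\nu(N_k)),\ka)\to 0$ for every $\ka$, and Theorem \ref{thmG.1} would upgrade this to $\La^{N_k}_K(\nu(N_k),\ka)\to 0$, forcing the total mass of the weak limit to be $0$ and contradicting regularity. So $\{\om(\nu(N))\}$ stays in a compact part of $\Om$. Using metrizability and separability, any subsequence has a sub-subsequence converging to some point in $\Om$; by UAT plus continuity the corresponding subsequential limit of $\La^N_K(\nu(N),\,\cdot\,)$ is $\La^\infty_K(\om',\,\cdot\,)$. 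Since regularity forces this limit to be the same for every subsequence, any two accumulation points $\om',\om''\in\Om$ must satisfy $\La^\infty_K(\om',\ka)=\La^\infty_K(\om'',\ka)$ for all $K$ and $\ka$; equivalently $\varphi_\nu(\om')=\varphi_\nu(\om'')$ for every $\nu\in\GT$, and (iii) then yields $\om'=\om''$. Therefore $\om(\nu(N))$ has a unique accumulation point $\om$ in $\Om$ and converges to it, and the previous paragraph identifies the limit measure with $\La^\infty_K(\om,\,\cdot\,)$.

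The main obstacle I anticipate is the bookkeeping in the ``only if'' direction, where two genuinely different properties of the family $\La^\infty_K$ must be combined: the Feller property from Corollary \ref{corF.3} is what prevents mass from leaking to infinity in $\Om$, while the injectivity coming from Proposition \ref{propF.3} and \eqref{eqF.5} is what pins down the limit uniquely. Once both are invoked alongside the uniform estimate of Theorem \ref{thmG.1}, the rest of the argument is essentially formal and reflects the fact that $\Om$ has been engineered precisely to be the parameter space that the transition kernels $\La^\infty_K$ jointly distinguish.
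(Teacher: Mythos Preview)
Your proposal is correct and follows essentially the same route as the paper's own derivation of Theorem~\ref{thmI.1} from Theorem~\ref{thmG.1}: the ``if'' direction combines the uniform estimate with continuity of $\om\mapsto\La^\infty_K(\om,\ka)$, while the ``only if'' direction rules out escape to infinity via Corollary~\ref{corF.3} and excludes multiple accumulation points via the injectivity in Proposition~\ref{propF.3}. The only cosmetic difference is the order in which you handle the two obstructions (you treat escape to infinity first, the paper treats distinct limit points first), but the substance is identical.
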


The aim of this section is to discuss the interrelations between this assertion
and the Uniform Convergence Theorem (Theorem \ref{thmG.1}). Recall that this
theorem says that for any fixed $\ka\in\GT_K$
\begin{equation}\label{eqI.1}
\lim_{N\to\infty}\sup_{\nu\in\GT_N}\left|\La^N_K(\nu,\ka)-\La^\infty(\om(\nu),\ka)\right|=0
\end{equation}

\begin{proof}[Derivation of Theorem \ref{thmI.1} from Theorem \ref{thmG.1}]
Combining \eqref{eqI.1}  with continuity of $\La^\infty_K(\om,\ka)$ in the
first argument we see that if the sequence $\om(\nu(N))$ converges to a point
$\om\in\Om$, then for any fixed $K$, the measure $\La^N_K(\nu(N),\,\cdot\,)$
weakly converges to the probability measure $\La^\infty_K(\om,\,\cdot\,)$, so
that $\{\nu(N)\}$ is regular.

Conversely, assume that $\{\nu(N)\}$ is regular and prove that
$\{\om(\nu(N))\}$ has a limit $\om\in\Om$. Since $\Om$ is locally compact, it
suffices to prove that $\{\om(\nu(N))\}$ cannot have two distinct limit points
in $\Om$ and cannot contain a subsequence converging to infinity.

The existence of distinct limit points is excluded by virtue of the argument
above and the fact that different points of $\Om$ generate different measures
on $\GT_1$, which in turn follows from Proposition \ref{propF.3}.

The escape to infinity for a subsequence is also impossible, as is seen from
\eqref{eqI.1} and the fact that $\La^\infty_K(\om,\ka)\to0$ as $\om$ goes to
infinity.

This completes the proof.
\end{proof}

\begin{proof}[Derivation of Theorem \ref{thmG.1} from Theorem \ref{thmI.1}
and results from \cite{Ok97}] It suffices to prove the following assertion: If
$N(1)<N(2)<\dots$ and $\nu(1)\in\GT_{N(1)},\nu(2)\in\GT_{N(2)}, \dots$ are such
that for any fixed $K$ and $\ka\in\GT_K$ there exists a limit
$$
\lim_{n\to\infty}\left(\La^{N(n)}_K(\nu(n),\ka)-\La^\infty(\om(\nu(n)),\ka)\right)=c_\ka,
$$
then $c_\ka=0$ for all $\ka$.

Passing to a subsequence we are led to the following two cases: either the
sequence $\{\om(\nu(N))\}$ converges to a point $\om\in\Om$ or this sequence
goes to infinity.

In the first case, the desired assertion follows from Theorem \ref{thmI.1}.
Indeed, it says that $\La^{N(n)}_K(\nu(n),\ka)\to\La^\infty_K(\om,\ka)$. On the
other hand, $\La^\infty_K(\om(\nu(n)),\ka)\to\La^\infty_K(\om,\ka)$ by
continuity of $\La^\infty_K(\om,\ka)$.

In the second case, we know that $\La^\infty_K(\om(\nu(n)),\ka)\to0$ for any
$\ka$ (see Corollary \ref{corF.3}). Therefore, we have to prove that for any
$K$, the measures $M^{(n)}_K:=\La^{N(n)}_K(\nu(n),\,\cdot\,)$ weakly converge
to 0.

Without loss of generality we may assume that for every $K$ the sequence
$\{M^{(n)}_K\}$ weakly converges to a measure $M^{(\infty)}_K$. It follows
(here we also use the  Feller property of the stochastic matrices
$\La^{K+1}_K$, see Proposition \ref{propF.9}) that the limit measures are
compatible with these matrices:
$$
M^{(\infty)}_{K+1}\La^{K+1}_K=M^{(\infty)}_K, \qquad K=1,2,3,\dots\,.
$$
Therefore, the total mass of $M^{(\infty)}_K$ does not depend on $K$. If this
mass equals 1, that is, the limit measures are probability measures, then
Theorem \ref{thmI.1} implies that the sequence $\om(\nu(N(n))$ converges in
$\Om$, which is impossible. If the total mass equal 0, the limit measures are
zero measures and we are done. Thus, it remains to prove that the total mass of
$M^{\infty}_K$ cannot be equal to a number strictly contained between  0 and 1.
It suffices to prove this assertion for $K=1$, and then it is the subject of
the proposition below, which relies on results of \cite{Ok97}.
\end{proof}

\begin{proposition}
Let $M^{(1)},M^{(2)}, \dots$ be a sequence of probability measures on $\Z$ such
that every $M^{(n)}$ has the form $\La^N_1(\nu,\,\cdot\,)$, where $N\ge2$ and
$\nu\in\GT_N$ depend on $n$. Then  $\{M^{(n)}\}$ cannot weakly converge to a
nonzero measure of total mass strictly less than\/ $1$.
\end{proposition}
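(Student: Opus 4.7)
The plan is to reduce the statement, via Okounkov's theorem from \cite{Ok97}, to a general fact about weak convergence of log-concave probability measures on $\Z$. Specifically, I would first invoke Okounkov's log-concavity: for every $\nu\in\GT_N$ with $N\ge 2$, the sequence $p_k:=\La^N_1(\nu,k)$, $k\in\Z$, is log-concave, meaning $p_k^2\ge p_{k-1}p_{k+1}$ for all $k$. Hence each measure $M^{(n)}$ in the hypothesis is log-concave, and the proposition reduces to the following: if a sequence of log-concave probability measures on $\Z$ converges weakly to a measure $M$, then $|M|$ is either $0$ or $1$.

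To prove the reduced statement I would argue by contradiction. Assume $M^{(n)}\to M$ weakly with $0<|M|<1$. Since $\Z$ is discrete, weak convergence is just pointwise convergence of atoms $p_k^{(n)}:=M^{(n)}(\{k\})\to p_k:=M(\{k\})$, and log-concavity passes to the limit. Because $M\ne 0$, the limit support $S:=\{k: p_k>0\}$ is a nonempty interval in $\Z$ (log-concavity forbids gaps). It then suffices to show that no mass can escape to $+\infty$; the case of $-\infty$ is symmetric via the reflection $k\mapsto -k$. Once both escapes are ruled out, $\sum_k p_k^{(n)}\to\sum_k p_k$ gives $|M|=1$, contradicting $|M|<1$.

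To rule out escape to $+\infty$ I would split into two cases according to whether the upper endpoint of $S$ is finite. If it is a finite $b$, then $p_b>0$ while $p_{b+1}^{(n)}\to 0$, so the ratio $r_b^{(n)}:=p_{b+1}^{(n)}/p_b^{(n)}\to 0$; by log-concavity the ratios $r_k^{(n)}:=p_{k+1}^{(n)}/p_k^{(n)}$ are non-increasing in $k$, hence $p_k^{(n)}\le p_b^{(n)}(r_b^{(n)})^{k-b}$ for $k\ge b$ and the tail $\sum_{k>b}p_k^{(n)}\le p_b^{(n)}r_b^{(n)}/(1-r_b^{(n)})$ vanishes. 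If the upper endpoint is $+\infty$, then $(p_k)_{k\in S}$ is a positive, log-concave, summable sequence, so the ratios $r_k:=p_{k+1}/p_k$ are non-increasing with limit $r_\infty<1$ (otherwise all $r_k\ge 1$, forcing $p_k$ to be non-decreasing and contradicting summability). Pick $K$ so large that $r_K<1$ and $p_K$ is as small as one wishes; for $n$ large, $r_K^{(n)}\to r_K<1$, so log-concavity gives $p_k^{(n)}\le p_K^{(n)}(r_K^{(n)})^{k-K}$ for $k\ge K$ and thus a uniform tail estimate $\sum_{k>K}p_k^{(n)}\le p_K^{(n)}r_K^{(n)}/(1-r_K^{(n)})$ which can be made arbitrarily small by choosing $K$ large and then $n$ large.

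The only nontrivial ingredient is Okounkov's log-concavity result, and it is the main obstacle only in the sense that it is the nontrivial external input being invoked. Everything else is elementary manipulation of log-concave sequences, and no delicate analytic estimates are required: log-concavity forces the mass of $M^{(n)}$ to sit in a single ``lump'' with geometrically decaying tails, and such mass cannot be partially lost in a weak limit.
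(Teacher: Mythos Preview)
Your proof is correct and follows essentially the same route as the paper's: both invoke Okounkov's log-concavity result from \cite{Ok97} as the sole nontrivial input, and then use log-concavity to show the sequence $\{M^{(n)}\}$ is tight once the limit is nonzero. The only difference is in how the tail control is carried out: the paper cites directly the argument on p.~276 of \cite{Ok97} to obtain uniform tail bounds and then passes through convergence of second moments and Chebyshev's inequality, whereas you give a self-contained ratio argument (monotonicity of $p_{k+1}^{(n)}/p_k^{(n)}$ yields geometric tails anchored at a point of the limit support). Both executions exploit the same mechanism---log-concavity forces the mass into a single lump with geometrically decaying tails---so your version is not a genuinely different approach, just a more explicit rendering of the tail estimate that avoids the detour through moments.
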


In other words, such a sequence of probability measures cannot escape to
infinity {\it partially\/}.

\begin{proof}
A measure $M$ on $\Z$ is said to be {\it log-concave\/} if for any two integers
$k,l$ of the same parity
$$
M(k)M(l)\le(M(\tfrac12(k+l)))^2.
$$

Each measure of the form $M=\La^N_1(\nu,\,\cdot\,)$ is log-concave: this
nontrivial fact is a particular case of the results of \cite{Ok97}.

Furthermore, such a measure has no {\it internal zeros\/}, that is, its support
is a whole interval in $\Z$. Indeed, it is not hard to check that the support
of $\La^N_1(\nu,\,\cdot\,)$ is the interval $\{\nu_N,\dots,\nu_1\}\subset\Z$.

Thus, our probability measures $M^{(n)}$ are log-concave and have no internal
zeros. Assume that they weakly converge to a {\it nonzero\/} measure
$M^{(\infty)}$. Then we may apply the argument of \cite[p. 276]{Ok97}. It
provides a uniform on $n$ bound on the tails of measures $M^{(n)}$, which shows
that for any $r=1,2,\dots$, the $r$th moment of $M^{(n)}$ converges to the
$r$th moment of $M^{(\infty)}$. The convergence of the second moments already
suffices (via Chebyshev's inequality) to conclude that $M^{(\infty)}$ is a
probability measure.
\end{proof}

\end{document}